\renewcommand{\a }{\alpha }
\renewcommand{\b }{\beta }
\renewcommand{\d}{\delta }
\newcommand{\D }{\Delta }
\newcommand{\e }{\varepsilon }
\renewcommand{\l }{\lambda }
\newcommand{\Ric} {{\rm Ric}}
\newcommand{\n }{\nabla }
\newcommand{\var }{\varphi }
\newcommand{\s }{\sigma }
\renewcommand{\S }{\Sigma}
\renewcommand{\o }{\omega }
\newcommand{\pa }{\partial}
\newcommand{\ov}{\overline}
\newcommand{\be}{\begin{equation}}
\newcommand{\ee}{\end{equation}}
\newcommand{\R}{\mathbb{R}}
\newcommand{\T}{\mathbb{T}}
\newcommand{\Z}{\mathbb{Z}}
\renewcommand{\P}{\mathbb{P}}
\newcommand{\DD}{\mathbb{D}}
\newcommand{\N}{\mathbb{N}}
\newcommand{\la}{\langle}
\newcommand{\ra}{\rangle}
\newcommand{\Riem}{\mathrm{Riem}}
\newcommand{\Sc}{\mathrm{Sc}}
\newcommand{\rd}{\mathrm{d}}
\newcommand{\bfex}{\mathbf{e}_{x}}
\newcommand{\bfey}{\mathbf{e}_{y}}
\newcommand{\bfez}{\mathbf{e}_{z}}
\author{Norihisa Ikoma 
\\ \vspace{-0.1cm}
\footnotesize Faculty of Mathematics and Physics
\\ \vspace{-0.1cm}
       \footnotesize       Institute of Science and Engineering
       \\       \vspace{-0.1cm}
 \footnotesize              Kanazawa University
         \\     \vspace{-0.1cm}
   \footnotesize            Kakuma, Kanazawa
              \\ \vspace{-0.1cm}
    \footnotesize           Ishikawa 9201192, JAPAN
\\
\footnotesize \texttt{ikoma@se.kanazawa-u.ac.jp}
\and  Andrea Malchiodi 
\\ \vspace{-0.1cm}
\footnotesize Scuola Normale Superiore 
\\ \vspace{-0.1cm}
\footnotesize Piazza dei Cavalieri, 7 
\\ \vspace{-0.1cm}
\footnotesize  56126 Pisa, ITALY
\\
\footnotesize \texttt{andrea.malchiodi@sns.it} 
\and  Andrea Mondino 
\\ \vspace{-0.1cm}
\footnotesize Mathematics Institute
\\ \vspace{-0.1cm}
\footnotesize Zeeman Building 
\\ \vspace{-0.1cm}
\footnotesize University of Warwick
\\ \vspace{-0.1cm}
\footnotesize Coventry CV4 7AL, UK
 \\ \footnotesize \texttt{A.Mondino@warwick.ac.uk}}
\title{Embedded area-constrained Willmore tori  of small area in Riemannian three-manifolds I: Minimization}
\begin{document}

\hyphenation{un-coun-ta-bly}

\newtheorem{lem}{Lemma}[section]
\newtheorem{pro}[lem]{Proposition}
\newtheorem{thm}[lem]{Theorem}
\newtheorem{rem}[lem]{Remark}
\newtheorem{cor}[lem]{Corollary}
\newtheorem{df}[lem]{Definition}
\newtheorem{ex}[lem]{Examples}

\maketitle

%\noindent $^a$ SISSA - Via Bonomea 265, 34136 Trieste, ITALY

\

\begin{abstract}
We construct embedded Willmore tori with small area constraint in Riemannian three-manifolds  under some curvature condition used to prevent M\"obius degeneration.  The construction relies on a Lyapunov-Schmidt reduction; to this aim we establish  new geometric expansions of  exponentiated small symmetric Clifford tori and analyze the  sharp asymptotic behavior of degenerating tori under the action of the M\"obius group.    In this first work we prove two existence results by minimizing or maximizing a suitable reduced functional, in particular we obtain embedded area-constrained Willmore tori (or, equivalently, toroidal critical points of the Hawking mass under area-constraint) in compact 3-manifolds with constant scalar curvature and in the double Schwarzschild space. In a forthcoming  paper new existence theorems will be achieved via Morse theory. 
\end{abstract}

\begin{center}

\bigskip\bigskip

\noindent{\it Key Words:}  Willmore functional, Willmore tori, Hawking mass, nonlinear fourth order partial differential equations,  Lyapunov-Schmidt reduction.

\bigskip

\centerline{\bf AMS subject classification: } 
49Q10, 53C21, 53C42, 35J60, 83C99.
\end{center}

\section{Introduction}

This is the first of a series of two papers where the construction of embedded area-constrained  Willmore tori in Riemannian three-manifolds is performed under curvature conditions: here via minimization/maximization,  while in the second paper \cite{IMM2} via Morse theory. 
\\

Let us start by introducing  the Willmore functional. Given an immersion $i:\Sigma \hookrightarrow (M,g)$ of a closed (compact, without boundary) two-dimensional surface $\Sigma$ into a Riemannian $3$-manifold $(M,g)$, the \emph{Willmore functional} is defined by 
\be\label{eq:defW}
W(i):=\int_{\Sigma} H^2 \, d\sigma
\ee
where $d\sigma$ is the area form induced by the immersion and  $H$ is the mean curvature  (we adopt the convention that $H$ is the sum of the principal curvatures or, in other words, $H$ is the trace of the second fundamental form $A_{ij}$  with respect to the induced metric $\bar{g}_{ij}$: $H:=\bar{g}^{ij} A_{ij}$).

An immersion $i$ is called \emph{Willmore surface} (or Willmore immersion) if it is a critical point of the Willmore functional with respect to normal perturbations or, equivalently, if it satisfies the associated Euler-Lagrange equation
\be\label{eq:WillmoreEq}
\Delta_{\bar{g}} H + H |\mathring{A}|^2 + H \Ric(n, n)=0,
\ee
where $\Delta_{\bar{g}}$ is the Laplace-Beltrami operator corresponding to the induced metric $\bar{g}$, $(\mathring{A})_{ij}:=A_{ij}-\frac{1}{2}H\bar{g}_{ij}$ is the trace-free second fundamental form, $n$ is a normal unit vector to $i$, and $\Ric$ is the Ricci tensor of the ambient manifold $(M,g)$. Notice that \eqref{eq:WillmoreEq} is a  fourth-order nonlinear elliptic PDE in the immersion map $i$.
\\ 

The Willmore functional was first introduced for immersions into the Euclidean space in 
20's and 30's by Blaschke and Thomsen, who were looking for  a conformal invariant theory which included  minimal surfaces. Since  minimal surfaces trivially satisfy  \eqref{eq:WillmoreEq} and - as they proved - $W$ is invariant under M\"obius transformations in the Euclidean space, they detected the  class of Willmore surfaces  as the  natural  conformally invariant generalization of minimal surfaces and called them {\em conformal minimal surfaces}. The topic was then rediscovered in the 60's  by Willmore \cite{Will}, who proved that the global minimum among all closed  immersed surfaces was strictly achieved by round spheres and conjectured that the genus one minimizer was the Clifford torus (and its images via M\"obius transformations).  This long standing conjecture has been recently proved by Marques-Neves \cite{MN} by min-max techniques (partial results towards the Willmore conjecture were previously obtained by Li-Yau \cite{LY}, Montiel-Ros \cite{MonRos},  Ros \cite{Ros}, Topping \cite{Top} and others, and that a crucial role in the proof of the conjecture is played by a result of Urbano \cite{Urb}). Let us also mention the fundamental works on the Willmore functional by Simon \cite{SiL} who proved the  existence of a smooth genus one minimizer of  $W$ in $\R^m$ (later generalized to higher genus by Bauer-Kuwert \cite{BK} who were inspired by a paper of Kusner \cite{Kus}; an alternative proof of these results has been recently given by Rivi\`ere \cite{Riv2} using the so called {\em parametric approach}), by Kuwert-Sch\"atzle \cite{KS} and by Rivi\`ere \cite{Riv1}.  

Probably because of the richness of the symmetries preserving the class of Willmore surfaces and because of
the simplicity and the universality of its definition, the Willmore functional shows up in very different fields of sciences and technology. It appears for instance in biology in the study of lipid bilayer cell membranes under the name {\em Helfrich energy}, in general relativity as being the main term in the so called {\em Hawking
Mass}, in string theory in high energy physics concerning the definition of  Polyakov's extrinsic action, in
elasticity theory this as a free energy of the non-linear plate Birkhoff theory, in optics, lens design, etc.
\\

Let us stress that all the aforementioned results about Willmore surfaces concern immersions into  Euclidean space (or, equivalently by conformal invariance, for immersions into a round sphere); the literature about Willmore immersions into curved Riemannian manifolds, which has interest in
applications as it might model non-homogeneous environments, is much more recent: the first existence result was established in \cite{Mon1} where the third author constructed  embedded Willmore spheres in a perturbative setting (see also  \cite{Mon2}, and the more recent work \cite{CM} in collaboration with Carlotto; related perturbative results under area constraint have been obtained by Lamm-Metzger-Schulze \cite{LMS}, by Lamm-Metzger \cite{LM1}-\cite{LM2}, and by the third author in collaboration with Laurain \cite{LM}).

The global problem, i.e. the existence of smooth immersed spheres minimizing quadratic curvature functionals in compact  Riemannian three-manifolds,  was then studied by the third author in collaboration with Kuwert and Schygulla in \cite{KMS} (see also \cite{MonSch} for the non compact case).  In  collaboration with Rivi\`ere \cite{MR1}-\cite{MR2}, the third author  developed the necessary tools for the calculus of variations of the Willmore functional in Riemannian manifolds and proved  existence of area-constrained Willmore spheres in homotopy classes  (as well as the existence of Willmore spheres under various assumptions and  constraints).
\\

Since all the above existence results  in Riemannian manifolds concern spherical surfaces, a natural question is about the existence of higher genus Willmore surfaces in general curved spaces; in particular we will focus here on the genus one case. 

Let us mention that if the ambient space has some special symmetry then the Willmore equation  \eqref{eq:WillmoreEq}  simplifies and one can construct explicit examples (see for instance the paper of Wang \cite{Wang} in case the ambient manifold is a product and the work of Barros-Ferr\'andez-Lucas-Merono  \cite{BFLM} for the  case of warped product metrics); we also mention the work of Chen-Li \cite{ChenLi} where the existence of stratified weak branched immersions of arbitrary genus minimizing quadratic curvature functionals under various constraints is investigated. 
\\

The goal of the present (and the subsequent \cite{IMM2}) work is to construct smooth embedded Willmore tori with small area 
constraint in Riemannian three-manifolds, under some curvature condition but without any symmetry assumption. 
Let us recall that area-constrained Willmore surfaces satisfy the equation
\[
\Delta_{\bar{g}} H + H |\mathring{A}|^2 + H \Ric(n, n)=\lambda H,
\]
for some $\lambda\in \R$ playing the role of Lagrange multiplier. These immersions are naturally linked to the Hawking mass
\[
m_H(i):=\frac{\sqrt{Area(i)}}{64 \pi^{3/2}} \left(16\pi-W(i)\right),
\]
since, clearly,  the critical points of the Hawking mass under area constraint are exactly the  area-constrained Willmore immersions (see \cite{Bray,ChYau,LMS} and the references therein for more material about the Hawking mass).
\\

\noindent The main result of this paper is the following (recall that the area of the 
Clifford torus in the Euclidean space is $4 \sqrt{2} \pi^2$). 

\begin{thm}\label{t:1}
Let $(M,g)$ be a compact $3$-dimensional Riemannian manifold. 
Denote by $\Ric$ and ${\rm Sc}$ the Ricci and the scalar curvature of $(M,g)$ 
respectively, and suppose either 
	\begin{equation} \label{eq:Assump1}
		3 \sup_{P \in M} \left( {\rm Sc}_{P} - \inf_{| \nu |_{g}=1} 
		{\rm Ric}_{P} ( \nu, \nu) \right) 
		> 2 \sup_{P \in M} {\rm Sc}_{P},
	\end{equation}
or else 
       \begin{equation} \label{eq:Assump2}
          	3 \inf_{P \in M} \left( {\rm Sc}_{P} - \sup_{| \nu |_{g}=1} 
		{\rm Ric}_{P} ( \nu, \nu) \right) 
		< 2 \inf_{P \in M} {\rm Sc}_{P}. 
	\end{equation}
Then there exists $\e_0>0$ such that for every  $\e \in (0,\e_0)$ there exists a smooth embedded Willmore torus in $(M,g)$ with constrained 
area equal to $4 \sqrt{2} \pi^2 \e^2$.

More precisely, these surfaces are obtained as normal graphs over exponentiated (M\"obius transformations of) Clifford tori and the  corresponding graph functions, once dilated by a factor $1/\e$, converge to $0$ in $C^{4,\a}$-norm as $\e\to 0$ with decay rate $O(\e^2)$. 
\end{thm}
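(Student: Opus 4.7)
The plan is a Lyapunov--Schmidt reduction built around an explicit finite-dimensional family of approximate solutions obtained by exponentiating M\"obius images of a small symmetric Clifford torus. For each point $P\in M$, each small scale $\varepsilon>0$, and each element $\phi$ of the conformal group acting on the standard Clifford torus $\T_0\subset\R^3$, I set $i_{P,\varepsilon,\phi}:=\exp_P(\varepsilon\,\phi(\T_0))$, using a $g$-orthonormal identification of $T_PM$ with $\R^3$. These are immersed tori of area close to $4\sqrt{2}\pi^2\varepsilon^2$, and they form the ``critical manifold'' on which the reduction is carried out. The linearisation of the Willmore operator at the Euclidean Clifford torus has a finite-dimensional kernel generated precisely by the infinitesimal action of isometries and of the M\"obius group. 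By a standard Schauder/implicit function argument applied to the projection of the Willmore--Lagrange equation onto the $L^2$-orthogonal complement of this kernel, I would produce, for every $(P,\phi)$ and every small $\varepsilon$, a unique normal graph function $w=w(P,\varepsilon,\phi)$ such that $i_{P,\varepsilon,\phi}+w\,\mathbf{n}$ solves the equation in all directions orthogonal to the kernel. The problem then reduces to finding critical points of the finite-dimensional \emph{reduced functional}
\[
\widetilde W_\varepsilon(P,\phi):=W\bigl(i_{P,\varepsilon,\phi}+w(P,\varepsilon,\phi)\,\mathbf{n}\bigr),
\]
subject to the area constraint, with a Lagrange multiplier $\lambda=\lambda(\varepsilon)$ that is itself part of the unknown.

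Next I would compute the asymptotic expansion of $\widetilde W_\varepsilon$ as $\varepsilon\to 0$, exploiting the geometric expansions of exponentiated Clifford tori announced in the abstract: one Taylor-expands $g$ in normal coordinates at $P$ in powers of $\varepsilon$, plugs into the formulas for the induced metric, the mean curvature and the traceless second fundamental form of $i_{P,\varepsilon,\phi}$, and integrates. The zeroth-order term is the Euclidean Willmore energy of the Clifford torus, which equals $2\pi^2$ and is $\phi$-independent by M\"obius invariance; the first non-trivial correction is of order $\varepsilon^2$, giving
\[
\widetilde W_\varepsilon(P,\phi)=2\pi^2+\varepsilon^2\,\Phi(P,\phi)+o(\varepsilon^2),
\]
where $\Phi(P,\phi)$ is an explicit integral of $\Sc_P$ and $\Ric_P$ weighted by the geometry of $\phi(\T_0)$. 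A direct computation should single out the universal coefficients of $\Sc_P$ and $\Ric_P(\nu,\nu)$ in $\Phi$, with $\nu$ a distinguished normal direction of $\phi(\T_0)$, and identify the asymptotic value of $\Phi$ in the limit in which $\phi$ leaves every compact subset of the M\"obius moduli space, so that $\phi(\T_0)$ collapses onto a round sphere or a great circle. The factors $3$ and $2$ in (\ref{eq:Assump1}) and (\ref{eq:Assump2}) are precisely the ratio of the interior-versus-degenerate coefficients that make these assumptions the sharp threshold for a strict inequality between an interior extremum of $\Phi$ and its value at M\"obius infinity.

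The main obstacle is therefore exactly this non-compactness of the M\"obius parameter $\phi$: along degenerating sequences the Clifford torus collapses onto a round sphere, so that compactness of maximising/minimising sequences for $\widetilde W_\varepsilon$ is not automatic. Here I would carry out a sharp asymptotic analysis of $\Phi(P,\phi)$ in this regime and combine it with the strict inequality provided by (\ref{eq:Assump1}) (respectively (\ref{eq:Assump2})) to conclude that, for $\varepsilon$ small, the supremum (respectively infimum) of $\widetilde W_\varepsilon$ is attained at some interior point $(P_\varepsilon,\phi_\varepsilon)$. The corresponding normal graph $i_{P_\varepsilon,\varepsilon,\phi_\varepsilon}+w(P_\varepsilon,\varepsilon,\phi_\varepsilon)\,\mathbf{n}$ is then a critical point of the full area-constrained Willmore functional in $(M,g)$, i.e.\ a genuine area-constrained Willmore torus. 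Smoothness follows from elliptic regularity for the fourth-order Euler--Lagrange equation, and embeddedness holds for $\varepsilon$ small enough since, after rescaling by $1/\varepsilon$, the graph function $w$ is $O(\varepsilon^2)$ in $C^{4,\alpha}$ and $\phi(\T_0)$ is itself embedded.
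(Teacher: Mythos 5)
Your proposal follows essentially the same route as the paper: a Lyapunov--Schmidt reduction over exponentiated M\"obius images of small Clifford tori (resting on Weiner's non-degeneracy of the Clifford torus modulo the conformal Jacobi fields), a finite-dimensional reduced functional expanded to order $\e^2$, and a comparison between the symmetric-torus coefficient $4\sqrt{2}\pi^2\left(\Sc_P-\Ric_P(\nu,\nu)\right)$ and the degenerate-torus coefficient $\tfrac{8\sqrt{2}}{3}\pi^2\,\Sc_P$ (whose $3\!:\!2$ ratio is exactly the threshold in \eqref{eq:Assump1}--\eqref{eq:Assump2}) to rule out M\"obius degeneration and extract an interior extremum, which is precisely the scheme of Sections 3--5. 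Only cosmetic slips: with the paper's convention $H=\kappa_1+\kappa_2$ the leading term is $8\pi^2$ rather than $2\pi^2$; the area-preserving inversions degenerate to round spheres with a shrinking handle (no great-circle collapse occurs); and once the signs are computed, \eqref{eq:Assump1} produces an interior \emph{minimum} while \eqref{eq:Assump2} produces an interior \emph{maximum}, the opposite of the assignment you stated.
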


\begin{rem}\label{rem:2tori}
If both the conditions \eqref{eq:Assump1} and \eqref{eq:Assump2} are satisfied then  there exist at least \emph{two} smooth embedded Willmore tori in $(M,g)$ with constrained 
area equal to $4 \sqrt{2} \pi^2 \e^2$ (see Remark \ref{Rem:Pf2Tori} for the proof).
\end{rem}

\
The assumptions of Theorem \ref{t:1} are rather mild and allow to include a large class of examples, as explained in the next corollaries and remarks. See also Remark \ref{r:sectional} for an interpretation of the quantities 
in \eqref{eq:Assump1} and \eqref{eq:Assump2} in terms of sectional curvatures. 

\begin{cor}\label{cor:Rconst}
Let $(M,g)$ be a compact $3$-dimensional manifold with constant scalar curvature. 
Then there exists $\e_0>0$ such that for every  $\e \in (0,\e_0)$ there exist at least two smooth embedded Willmore tori in $(M,g)$ with constrained 
area equal to $4 \sqrt{2} \pi^2 \e^2$.
%
%More precisely, these surfaces are obtained as normal graphs over exponentiated (M\"obius transformations of) Clifford tori and the  corresponding graph functions converge to $0$ in $C^{4,\a}$-norm as $\e\to 0$ with decay rate $O(\e^2)$. 
\end{cor}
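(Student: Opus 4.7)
The plan is to derive Corollary~\ref{cor:Rconst} from Theorem~\ref{t:1} combined with Remark~\ref{rem:2tori}: two embedded Willmore tori are produced as soon as both hypotheses \eqref{eq:Assump1} and \eqref{eq:Assump2} hold. Under the constant scalar curvature assumption $\Sc \equiv S$, one has $\sup_M \Sc = \inf_M \Sc = S$, and after dividing by $3$ both conditions reduce to
\[
\inf_{P \in M,\,|\nu|_g = 1} \Ric_P(\nu,\nu) \;<\; \frac{S}{3} \;<\; \sup_{P \in M,\,|\nu|_g = 1} \Ric_P(\nu,\nu).
\]

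First I would verify these inequalities whenever $(M,g)$ is not Einstein. Since $\dim M = 3$, at each $P \in M$ the Ricci eigenvalues $\lambda_1(P) \le \lambda_2(P) \le \lambda_3(P)$ sum to $\Sc_P = S$, so $S/3$ is their arithmetic mean. If $(M,g)$ is not Einstein, there is a point $P_0$ at which the three eigenvalues are not all equal, which immediately forces $\lambda_1(P_0) < S/3 < \lambda_3(P_0)$; taking infimum and supremum over $P$ and unit $\nu$ yields the two strict inequalities displayed above, and Remark~\ref{rem:2tori} produces the desired pair of Willmore tori.

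The remaining possibility is that $(M,g)$ is Einstein, which in dimension $3$ forces constant sectional curvature and makes both \eqref{eq:Assump1} and \eqref{eq:Assump2} degenerate to equalities. The natural workaround is to exploit the homogeneity of the resulting space form: the Lyapunov--Schmidt reduction underlying Theorem~\ref{t:1} still produces a smooth reduced functional on the compact finite-dimensional parameter space of exponentiated M\"obius-transformed Clifford tori, and although by homogeneity its leading-order expansion is position-independent, the reduced functional is smooth on a compact manifold and so attains both a minimum and a maximum, giving two critical tori.

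The main obstacle is precisely this Einstein case: the quantitative curvature assumptions of Theorem~\ref{t:1} fail, so one must either push the expansion of the reduced functional to the next non-trivial order and combine it with a compactness/min-max argument, or else carry out a dedicated construction of Willmore tori in space forms exploiting conformal invariance.
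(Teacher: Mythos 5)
Your first step is fine and matches the paper in substance: with $\Sc\equiv S$ the two conditions \eqref{eq:Assump1} and \eqref{eq:Assump2} reduce exactly to $\inf_{P,\nu}\Ric_P(\nu,\nu)<S/3<\sup_{P,\nu}\Ric_P(\nu,\nu)$, and in dimension three a non-Einstein point forces both strict inequalities (the paper phrases this via Schur's lemma and Remark \ref{r:sectional}, you via Ricci eigenvalues; the two are equivalent), so Remark \ref{rem:2tori} applies. The genuine gap is the remaining case, where $(M,g)$ is Einstein, hence of constant sectional curvature. Your proposed workaround there does not work as stated: the reduced functional $\Phi_\e$ of Proposition \ref{p:variational} is only constructed on $\mathcal{T}_{\e,K}$ for a compact subset $K\subset\subset\DD$, so the parameter space $M\times SO(3)\times K$ is a manifold \emph{with boundary} in the M\"obius direction, and a global minimum or maximum attained on $\partial\mathcal{T}_{\e,K}$ (i.e.\ at $|\o|=\max_{K}|\o|$) gives no critical point. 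Ruling out exactly this boundary behaviour is the whole purpose of the energy comparison between symmetric and degenerating tori (Propositions \ref{p:exp-W-to} and \ref{p:expdegtorus}), and when $\Sc$ is constant and $\Ric_P(\nu,\nu)\equiv S/3$ the two expansions agree at order $\e^2$, so no conclusion can be drawn without pushing the expansion further; "smooth on a compact manifold, hence attains min and max" is simply not available. You acknowledge this, but then only sketch possible strategies without carrying any of them out, so the Einstein case remains unproved in your argument.

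The paper closes this case by a short, completely different argument that bypasses the reduction: a compact $3$-manifold of constant sectional curvature $\bar K$ is conformally equivalent to a quotient of Euclidean space (it is a quotient of $\mathbb{S}^3$ or $\mathbb{H}^3$, both conformally flat), and the functional $W_{cnf}(i)=\int_\Sigma\bigl[H^2+4\bar K\bigr]\,d\sigma=W(i)+4\bar K\,{\rm Area}(i)$ is conformally invariant and has the same area-constrained critical points as $W$. Hence the images under this conformal equivalence of Euclidean Clifford tori (and of their M\"obius transforms) are embedded area-constrained Willmore tori in the space form, for every small prescribed area, and there are clearly at least two of them. To complete your proof you should either reproduce this conformal-invariance argument or genuinely carry out one of the alternatives you mention; as written, the second half of the corollary is missing.
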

Corollary \ref{cor:Rconst} easily follows by Remark  \ref{rem:2tori} 
once we observe that if ${\rm Sc}\equiv S\in \R$ and 
if $(M,g)$ does not have constant sectional curvature, 
then Schur's lemma implies the existence of $P \in M$ 
at which $(M,g)$ is not isotropic. 
With the aid of the expressions in Remark \ref{r:sectional},  both 
the  assumptions \eqref{eq:Assump1} and \eqref{eq:Assump2} are satisfied. 
%(to this  aim recall that, by Schur's Lemma, 
%in dimension three any Einstein manifold is a space form). 
On the other hand, if $(M,g)$ has constant sectional curvature $\bar{K} \in \R$ 
then it is conformally equivalent to (a quotient) of the Euclidean space 
(indeed either it is a quotient of the three-sphere ${\mathbb S}^3$ 
or of the Hyperbolic three-space ${\mathbb H}^3$ 
both conformally equivalent to the Euclidean three-space). 
Now it is well known that the functional 
$$W_{cnf}(i):=\int_\Sigma [H^2+ 4 \bar{K}] d\sigma= W(i)+4 \bar{K} \,Area(i)$$
is conformally invariant (see for instance \cite{WEINER}). Of course the area-constrained critical points of $W$ are exactly the area-constrained critical points of $W_{cnf}$, but then by conformal invariance it follows that Euclidean Clifford tori correspond to area-constrained Willmore tori in the space form.

\begin{rem}
The class of compact $3$-manifolds with constant scalar curvature include many remarkable examples of ambient spaces  which play an important role in contemporary surface theory. Trivial cases are compact quotients of space forms (notice that the same existence result applies to the standard non compact space forms as explained below), but more generally any homogeneous three manifold has of course constant scalar curvature. Examples of compact homogeneous spaces are $\mathbb{S}^2 \times \mathbb{S}^1$, Berger spheres and any compact quotient of a three-dimensional Lie Group. The study of special surfaces (minimal, constant mean curvature, totally umbilic) in homogeneous spaces is a very active area of research: for instance let us mention the recent monograph of Meeks-Perez \cite{MePe},  the recent paper on CMC spheres in homogeneous three-spheres by Meeks-Mira-Perez-Ros \cite{MeMiPeRo} and the classification of totally umbilical surfaces in homogeneous three-spaces by Manzano-Souam \cite{MaSo}.

Let us stress that most of the results in this setting  are for genus 0 surfaces and for second order problems, so the originality of our result lies in  both exploring higher genus surfaces and higher order problems (recall that the Willmore equation is of fourth order, while minimal,CMC, and totally umbilical surface equations are of second order). 
\end{rem}

Before passing to describe the organization of the paper let us mention that our techniques can be adapted to handle also some non compact framework. Indeed, as (remarkable) example, we prove the existence of toroidal area-constrained critical points of the Hawking mass in the Schwarzschild space.

Recall that the Schwarzschild metric of mass $m>0$ is given by $(\R^3 \setminus \{0\}, g_{Sch})$ where $g_{ij}(x) = (1 + \frac{m} {2r})^{4} \delta_{ij}$, where of course $r=\sqrt{(x^1)^2+(x^2)^2+(x^3)^2}$. The metric is spherically symmetric,  conformally equivalent to the Euclidean one and  asymptotically flat, it has
zero scalar curvature, and the  sphere at $\{r = m/2\}$  is totally geodesic. In fact,
the Schwarzschild metric is symmetric under the mapping $r \mapsto \frac{m^2}{4r}$ and therefore it has two 
asymptotically flat ends.

In the next remark we recall what is known about minimal and CMC surfaces in Schwarzschild metric (we thank Alessandro Carlotto for a discussion about this point).

\begin{rem}[Minimal and CMC surfaces in Schwarzschild]
\begin{itemize}
\item In the Schwarzschild space there are no non-spherical closed minimal surfaces: indeed it is known that if a closed minimal surface $\Sigma$ in the Schwarzschild space is non-spherical then it must be totally geodesic and in particular totally umbilic, but the fact of being totally umbilic is conformally invariant and therefore $\Sigma$ is totally umbilic also in the Euclidean space. But then $\Sigma$ is a Euclidean round sphere which contradicts that $\Sigma$ was not spherical. More precisely, by a  maximum principle argument  using comparison with CMC slices, it is possible to show that  the only embedded closed minimal hypersurface in $(\R^3 \setminus \{0\}, g_{Sch})$ is the horizon $\{r = m/2\}$, which in fact   is totally geodesic.
\item Regarding  CMC surfaces in  $(\R^3 \setminus \{0\}, g_{Sch})$, it was proved by Brendle \cite{Brendle} that the only embedded closed CMC surfaces in the outer Schwarzschild $(\R^3 \setminus B_{m/2}(0), g_{Sch})$ are the spherical slices $\{r=const\}$ (let us mention that the results of Brendle include a larger class of warped products metrics). The embeddedness assumption is crucial for this classification result,  in view of possible phenomena analogous to the Wente tori (which are immersed and CMC) in $\R^3$.
It is also essential that the closed surfaces do not intersect the horizon $\{r = m/2\}$. Indeed, solving the isoperimetric problem in  $(\R^3 \setminus \{0\}, g_{Sch})$ for small volumes, it is expected (by perturbative arguments \'a la  Pacard-Xu \cite{PX}) that the isoperimetric surfaces are spherical surfaces intersecting  $\{r = m/2\}$ (regarding the  isoperimetric problem in Schwarzschild  see also the Ph.D. thesis of  Bray \cite{Bray} and the recent paper by Brendle-Eichmair \cite{BrEi}).
\\

\end{itemize}

\noindent
Summarizing, it is known that in  $(\R^3 \setminus \{0\}, g_{Sch})$ there are \emph{no non-spherical embedded minimal} surfaces and it is expected there are \emph{no  non-spherical embedded CMC} surfaces. 
\end{rem}

In sharp contrast to the aforementioned situation, our next theorem asserts the existence of  \emph{embedded tori} which are critical points of the Hawking mass under area constraint. 

\begin{thm}\label{t:2}
Let $(\R^3 \setminus \{0\}, g_{Sch})$, with $g_{ij}(x) = (1 + \frac{m} {2r})^{4} \delta_{ij}$, be the Schwarzschild metric of mass $m>0$.
Then there exists $\e_0>0$ such that for every  $\e \in (0,\e_0)$ there exist smooth embedded tori (infinitely-many, by 
symmetry) in $(\R^3 \setminus \{0\},g_{Sch})$ which are critical points of the Hawking mass with constrained 
area equal to $4 \sqrt{2} \pi^2 \e^2$.
%
%More precisely, these surfaces are obtained as normal graphs over exponentiated (M\"obius transformations of) Clifford tori and the  corresponding graph functions converge to $0$ in $C^{4,\a}$-norm as $\e\to 0$ with decay rate $O(\e^2)$. 
\end{thm}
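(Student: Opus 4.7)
The plan is to adapt the Lyapunov--Schmidt reduction developed for Theorem \ref{t:1} to the Schwarzschild background, using the large isometry group of $(\R^3\setminus\{0\},g_{Sch})$ in place of the curvature assumptions \eqref{eq:Assump1}--\eqref{eq:Assump2}.

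First, observe that the Lyapunov--Schmidt reduction is essentially local in character: it produces, for each small $\e>0$, a smooth reduced functional $\Phi_\e$ on the finite-dimensional parameter space of exponentiated Clifford tori (base point $p$, axis direction $v\in\Sp^2$, and the additional conformal parameters controlling M\"obius degeneration), whose critical points correspond to embedded area-constrained Willmore tori. The construction relies only on uniform elliptic estimates on compact subsets of the ambient manifold. Since $g_{Sch}$ is smooth on $\R^3\setminus\{0\}$ with bounded geometry on each annulus $\{0<\delta\le r\le \delta^{-1}\}$, the reduction carries over verbatim.

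Next, exploit the symmetries. The isometry group of $(\R^3\setminus\{0\}, g_{Sch})$ contains $O(3)$ (rotations about the origin) together with the inversion $\phi(x)=(m^2/(4|x|^2))\,x$: a direct computation confirms $\phi^*g_{Sch}=g_{Sch}$. Because the family of exponentiated Clifford tori and the Willmore/area functionals are geometrically defined, $\Phi_\e$ is equivariant under this isometry group. Restrict attention to the 1-parameter slice $\mathcal S$ of Clifford tori centered at $r_0\hat z$ with axis $\hat z$, for $r_0>0$; this is precisely the fixed-point set of the $SO(2)$ subgroup of rotations about $\hat z$. On $\mathcal S$ the reduced functional descends to a smooth function of $r_0\in(0,\infty)$.

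The key observation is that $\phi$ preserves $\mathcal S$: it sends the torus centered at $r_0\hat z$ with axis $\hat z$ to the torus centered at $(m^2/(4r_0))\hat z$ with axis $-\hat z$, which coincides with the original axis by the intrinsic $\Z_2$-symmetry of the Clifford torus. Hence
\[
\Phi_\e(r_0)=\Phi_\e\bigl(m^2/(4r_0)\bigr),
\]
and in the logarithmic variable $s=\log(2r_0/m)$ the function $\Phi_\e|_{\mathcal S}$ is smooth and even. Therefore $\tfrac{d}{ds}\Phi_\e(s)|_{s=0}=0$, so $r_0=m/2$ (the horizon) is automatically a critical point of the restricted functional. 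By Palais' principle of symmetric criticality (applied to the compact group $SO(2)$), it is a critical point of the full $\Phi_\e$. The reduction then yields an embedded area-constrained Willmore torus of area $4\sqrt{2}\pi^2\e^2$ centered on the horizon with axis $\hat z$; embeddedness for small $\e$ follows from $C^{4,\alpha}$-proximity to the exponentiated Clifford torus. Applying $R\in SO(3)$ produces, for every $u=R(\hat z)\in\Sp^2$, a distinct critical torus centered at $(m/2)u$ with axis $u$, giving the claimed infinite family.

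The main delicacy will be the careful verification that the Lyapunov--Schmidt reduction is equivariant under the isometry $\phi$ (so that the orthogonal projection onto the complement of the approximate tangent space commutes with $\phi$), and that the M\"obius-parameter portion of the reduction is compatible with the inversion. As a fallback, one may use the inversion invariance to localize: $\Phi_\e|_{\mathcal S}$ has identical limits as $r_0\to 0^+$ and $r_0\to +\infty$, hence attains its extrema on any $\phi$-invariant compact interval $[\delta, m^2/(4\delta)]$ containing $m/2$, producing the required critical point by the standard variational argument.
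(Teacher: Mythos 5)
Your proposal is essentially correct in outline, but it follows a genuinely different route from the paper. The paper's proof (Subsection \ref{Sec:ProofT2}) never uses the inversion isometry: it fixes the compact annulus $A_\tau=\{\tau\le|P|_{g_0}\le\tau^{-1}\}$, notes $\Sc\equiv 0$ so that by Proposition \ref{p:expdegtorus} degenerating tori ($|\o|\to1$) have energy $8\pi^2+O(\e^3)$, uses asymptotic flatness of \emph{both} ends so that on $\partial A_\tau$ all exponentiated tori have energy $8\pi^2+o_\tau(1)\e^2$, and uses $\eta=\max|\Ric_P(\nu,\nu)|>0$ (attained at an interior point) together with Proposition \ref{p:exp-W-to} to produce a symmetric torus whose energy deviates from $8\pi^2$ at order $\eta\,\e^2$; hence the min or max of $\Phi_\e$ over the compact parameter set is attained at an interior point, which is the desired critical point, and rotations give infinitely many. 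That argument buys robustness: it only needs scalar flatness, $\Ric\not\equiv0$ and the two flat ends, and extends verbatim to ALE scalar-flat manifolds (Remark \ref{rem:ALE}). Your argument instead exploits the special symmetry $\phi(x)=\frac{m^2}{4|x|^2}x$ (correctly verified to be an isometry) plus Palais' symmetric criticality for the compact group $SO(2)$, and buys a sharper localization — critical tori pinned at the horizon, with axis radial — while dispensing entirely with the energy expansions of Section \ref{Sec:Expansions} and with boundary control in $\o$ and in $P$; the price is that it does not generalize beyond metrics possessing such a reflection-type isometry.

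Two caveats you should address to make the argument complete. First, the equivariance of the reduced functional, which you flag but do not prove, is genuinely needed at full strength: $\Phi_\e$ is built from frame-dependent data ($\exp_P$ in a chosen orthonormal frame, the spans $\mathcal{K}^{\varphi}_{\e,P,R,\o}$, the projections $\Pi^{\varphi}_{\e,P,R,\o}$), so invariance under the rotations about $\hat z$ and under $\phi$ must be deduced from the \emph{uniqueness} clause in Proposition \ref{p:lyap} (an ambient isometry maps the constraints $j)$--$jjj)$ at $(P,R,\o)$ to the same constraints at the image parameters, hence maps $\var_\e$ to $\var_\e$); one must also check that $d\phi_P\in O(3)\setminus SO(3)$ still maps the family $\{R\,\T_\o\}$ into itself, which works because the inversion centres lie in the $xy$-plane so reflections across that plane commute with $T_\o$ and fix $\T$. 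Second, your critical point sits exactly at $\o=0$, where the parametrization $(P,R,\o)$ is degenerate (Remarks \ref{r:nondeg} and \ref{r:param}); criticality must be formulated on the smooth manifold of surfaces (or in the modified chart of Remark \ref{r:nondeg}) before invoking Proposition \ref{p:variational}. Finally, the ``fallback'' paragraph is not a substitute for symmetric criticality: equal values of $\Phi_\e|_{\mathcal S}$ at the endpoints of a $\phi$-invariant interval only yield an interior critical point of the \emph{restriction} to the one-dimensional slice, and you still need the $SO(2)$-equivariance argument (or a full min/max over all seven parameters with boundary control, i.e.\ the paper's route) to conclude criticality of the full $\Phi_\e$.
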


\begin{rem}\label{rem:ALE} 
With analogous arguments to the proof of Theorem \ref{t:2} one can actually prove existence of smooth embedded tori which are critical points of the Hawking mass with small constrained 
area in \emph{asymptotically locally Euclidean  (ALE) scalar flat} 3-manifolds (for more details see Remark \ref{rem:proofRem}). More precisely, the following two conditions are sufficient for our arguments.

1) $(M,g)$ is a complete non compact 3-manifold whose scalar curvature vanishes identically: $\Sc\equiv0$.

2) Fixed some base point  $x_0\in M$, there exists $r>0$ with the following property: for every $\epsilon>0$ there exists $R_{\epsilon}>0$ such that for any $x\in M\setminus B_{R_\epsilon}^M(x_0)$ there exists a diffeomorphism  $\Psi: B_r^{\R^3}(0)\to B_r^M(x)$ satisfying $\|\delta_{ij}-(\Psi^* g)_{ij}\|_{C^2(B^{\R^3}_r(0))} \leq \epsilon$. 
\\Notice that  condition 1) is equivalent to the constrained Einstein equations in the vacuum case, and 2) is a mild uniform control of the local geometry of $M$ together with a mild asymptotic condition. 
\end{rem}

Now we pass to outline the structure of the paper and the 
main ideas of the construction: a Lyapunov-Schmidt reduction. Roughly, such technique can be summarized as follows (for a systematic presentation with examples see the monograph of Ambrosetti and the second author \cite{am2}; the original technique is due to Lyapunov-Schmidt but the variational implementation  we are going to describe is inspired by the work of Ambrosetti-Badiale \cite{ab1}-\cite{ab2}): one is interested in finding critical points of a real valued functional $I_\e$ from infinite dimensional space $X$,  knowing that for $\e>0$ small enough there  exists a finite dimensional manifold $Z_\e\subset X$ made of  {\em almost critical points of $I_\e$} (in the sense that the differential $I'_\e$ is small enough on $Z_\e$ in a suitable sense). 

If one also knows that the second differential $I''_\e$ is non degenerate on $Z_\e$, or more precisely $I''_\e$ restricted to the orthogonal complement of the tangent space to $Z_\e$ is non degenerate (here {\em orthogonal complement} must be  considered just formally as  a motivation, but this is rigorous  if $X$ is a Hilbert space), then one can solve an auxiliary equation (given by the projection of the equation $I'_\e=0$ onto the orthogonal complement of $TZ_\e$) and reduce the problem of finding critical  points of the functional $I_\e:X\to \R$ to finding critical points of a suitable  functional $\Phi_\e: Z_\e \to \R$; of course the advantage being the reduction of the problem to studying  a function of \emph{finitely many} variables. 
\\

In our case the functional $I_\e$  is of course the Willmore energy $W$ defined in \eqref{eq:defW} and  $X$ is the space of smooth immersions (with  area constraint $\e$) from the Clifford torus into the Riemannian manifold  $(M,g)$.  Observe that, by the conformal invariance of the Willmore functional in $\R^3$,   the family of (images under M\"obius transformations of)  Clifford tori in $\R^3$ form a non compact critical manifold for the Euclidean Willmore functional; moreover from the classical paper of Weiner \cite{WEINER} it is known that the second variation of  $W$ is non degenerate on this critical manifold  and by the recent gap-theorem proved by Nguyen and the third author \cite{MonNgu} this critical manifold is isolated in energy from the next Willmore torus. In Section 2 we describe in some detail the M\"obius maps which preserve the area of the Clifford torus, as 
well as Weiner's result.

Since at small scales a Riemannian metric approaches the Euclidean one, it is natural to expect that the images  of small Clifford tori via   exponential map form a manifold of {\em almost critical points} of $W$ in the above sense. 
This will be proved, with quantitative estimates, in Section \ref{ss:appsol}. After that, the finite-dimensional 
reduction of the problem will be carried out: in Proposition \ref{p:lyap} for every (exponentiated) torus we will 
construct a  perturbation which will solve our problem up to some further Lagrange multipliers given by the 
Jacobi fields of translations, rotations and M\"obius inversions. The abstract construction in Section \ref{Sec:FDR} is rather standard, nevertheless the main obstacle here is due to the fact that the action of the M\"obius group on the Clifford torus is non-compact.

To overcome this issue we incorporate in the abstract construction the variational structure of the problem. 
We will compare the Willmore energy of (exponentiated) symmetric VS degenerating tori. These 
expansions are worked-out in Section \ref{Sec:Expansions} (see in particular  Proposition \ref{p:exp-W-to}  and Proposition \ref{p:expdegtorus} respectively), where the effect of curvature is taken into account. Degenerate small tori 
look like geodesic spheres with small handles, so for these we check that the scalar curvature of $M$ 
plays the main role in the expansion (compare to the results in \cite{LM1} and \cite{Mon1}). For symmetric 
tori instead we observe an effect due to a combination of the scalar curvature and the sectional curvature 
of the plane of symmetry of the torus (which can be expressed in terms of the Ricci tensor in the 
axial direction of the torus), see Remark \ref{r:sectional}. 
These expansions are probably the  main contribution of the present work; we believe that they might 
play a role in further developments of the topic, especially in ruling-out possible degeneracy 
phenomena under global (non-perturbative) variational approaches to the problem, as it has already happened 
for the case of Willmore spheres. 

In Section \ref{Sec:ProofT1} we will use the assumptions in Theorem \ref{t:1} for ruling out 
M\"obius degeneration by direct energy comparison, which would be more costly. We will 
consider a sequence of compact sets invading the family of M\"obius inverted tori
 and we will show that the Willmore energy in the interior is strictly lower (or higher) than the energy on their boundary. 
This will prove the existence of a critical point despite the non-compactness  of the problem. 
Finally, some examples to the applicability of Theorem \ref{t:1} are provided.

\subsection*{Acknowledgements}
The first author was supported by JSPS Research Fellowships 24-2259. 
The second author is supported by the project {\em Geometric Variational Problems} from Scuola Normale Superiore and 
by MIUR Bando PRIN 2015 2015KB9WPT$_{001}$.  He is also member of GNAMPA as part of INdAM. 
The third author acknowledges  the support of  the ETH fellowship. 
This work was done while the first author visited the University of Warwick and SISSA: 
he would like to express his gratitude for the hospitality.

%%%%%%%%%%%%%%%%%%%%%%%%%%%%%%%%%%%%%%%%%%%%
%%%%%%%%%%%%%%%%%%%%%%%%%%%%%%%%%%%%%%%%%%%%
%%%%%%%%%%%%%%%%%%%%%%%%%%%%%%%%%%%%%%%%%%%%

\section{Preliminaries}

%%%%%%%%%%%%%%%%%%%%%%%%%%%%%%%%%%%%%%%%%%%%
%%%%%%%%%%%%%%%%%%%%%%%%%%%%%%%%%%%%%%%%%%%%
%%%%%%%%%%%%%%%%%%%%%%%%%%%%%%%%%%%%%%%%%%%%

		In this section, we discuss the M\"obius inversions 
preserving the area of the Clifford Torus $\T$. 
Moreover, we collect some properties of the Willmore functional. 
Hereafter, we use the notations $\la \cdot, \cdot \ra$ and 
$g_0(\cdot,\cdot)$ for the Euclidean metric.

%%%%%%%%%%%%%%%%%%%%%%%%%%%%%%%%%%%%%%%%%%%%
%%%%%%%%%%%%%%%%%%%%%%%%%%%%%%%%%%%%%%%%%%%%

\subsection{Inverted tori with fixed area}\label{ss:invfixarea}

%%%%%%%%%%%%%%%%%%%%%%%%%%%%%%%%%%%%%%%%%%%%
%%%%%%%%%%%%%%%%%%%%%%%%%%%%%%%%%%%%%%%%%%%%

This subsection is devoted to the analysis of the M\"obius inversions 
of $\T$ which preserve the area of the torus. 
Through this paper, we parametrize the Clifford torus $\T$ by 
	\[
		\T := \left\{ X(\varphi,\theta) \; : \;
		\varphi, \theta \in [0,2\pi] \right\} 
	\]
where 
	\begin{equation}\label{eq:def-X}
		X(\varphi,\theta) := 
		\left( (\sqrt{2}+\cos \varphi) \cos \theta, \ 
		(\sqrt{2} + \cos \varphi) \sin \theta, \ 
		\sin \var  \right).
	\end{equation}
By this parametrization, the area element $d \sigma_{\T}$ is given by 
	\begin{equation}\label{eq:area-element}
		d \sigma_{\T} = \left( \sqrt{2} + \cos \varphi \right) 
		d \varphi d \theta.
	\end{equation}

\noindent		For $x_0 \in \R^3$ and 
$\eta > 0$, the M\"obius inversion with respect to $\partial B_{\eta}(x_0)$ 
is defined by 
\begin{equation}\label{eq:Phix0eta}
\Phi_{x_0,\eta}(x) := 
		\frac{\eta^2}{|x - x_0|^2} ( x - x_0 ) + x_0.
\end{equation}
For any smooth compact surface 
$\Sigma \subset \R^3 \backslash \{x_0\}$, 
we set $\overline{\Sigma} := \Phi_{x_0,\eta}(\Sigma)$ 
and we denote 
the volume elements of $\Sigma$ and $\overline{\Sigma}$ 
by $d \sigma_{\Sigma}$ and $d \sigma_{\overline{\Sigma}}$ respectively. 
By the conformality of $\Phi_{x_0,\eta}$ it turns out that 
	\begin{equation}\label{1}
		d \sigma_{\overline{\Sigma}} = 
		\frac{\eta^4}{|x-x_0|^4} d \sigma_\Sigma. 
	\end{equation}

\noindent We are interested in two opposite limit behaviours of the M\"obius inversions applied to $\T$, 
namely for large and small inversion radii. 
First of all, we observe the existence and basic properties 
of the M\"obius inversions preserving the area of $\T$.

	\begin{lem}\label{101}
		Let $\bfex := (1,0,0)$. 
		Then for any $\eta > 0$, there exists a unique $\xi_{\eta} > \sqrt{2}+1$ 
		such that 
			\[
				|\Phi_{-\xi_{\eta} \bfex, \eta} ( \T ) |_{g_{0}} 
				= 4 \sqrt{2} \pi^2 = | \T |_{g_{0}}
			\]
		where $|A|_{g_{0}}$ denotes the area of a surface $A$ in the Euclidean space. 
		Moreover, the map $\eta \mapsto \xi_{\eta}$ 
		is strictly increasing in $(\sqrt{2} + 1, \infty)$ and 
		smooth. Finally, one observes that 
			\[
				\lim_{\eta \to 0} \xi_{\eta} = \sqrt{2} + 1, \qquad 
				\lim_{\eta \to \infty} \xi_{\eta} = \infty.
			\]
	\end{lem}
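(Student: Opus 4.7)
The plan is to reduce everything to the analysis of a single one-variable function. From \eqref{1} and the definition of $\Phi_{x_0,\eta}$,
\[
F(\xi,\eta) := |\Phi_{-\xi\bfex,\eta}(\T)|_{g_0} = \eta^4 \int_{\T} \frac{d\sigma_\T}{|x+\xi\bfex|^4} =: \eta^4 G(\xi),
\]
so the desired equation $F(\xi_\eta,\eta) = 4\sqrt{2}\pi^2$ is equivalent to $G(\xi_\eta) = 4\sqrt{2}\pi^2\eta^{-4}$. Hence it suffices to prove that $G\colon(\sqrt{2}+1,\infty)\to(0,\infty)$ is a smooth, strictly decreasing bijection; once this is done, every assertion of the lemma will follow by the implicit/inverse function theorem applied to this single scalar equation.

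The critical observation for strict monotonicity is a uniform positivity coming from \eqref{eq:def-X}: for every $x\in\T$ one has $x^1=(\sqrt{2}+\cos\varphi)\cos\theta \geq -(\sqrt{2}+1)$, with equality only at $X(0,\pi)=-(\sqrt{2}+1)\bfex$. Therefore, for each $\xi>\sqrt{2}+1$, $x^1+\xi>0$ uniformly on $\T$, and differentiating under the integral sign gives
\[
G'(\xi) = -4\int_{\T}\frac{x^1+\xi}{|x+\xi\bfex|^6}\,d\sigma_\T < 0,
\]
while smoothness of $G$ on $(\sqrt{2}+1,\infty)$ follows by dominated convergence applied to the (locally uniformly bounded) integrands. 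For the boundary behavior, $|x+\xi\bfex|^{-4}=O(\xi^{-4})$ uniformly on $\T$ yields $G(\xi)\to 0$ as $\xi\to\infty$. In the opposite limit $\xi\to(\sqrt{2}+1)^+$, Taylor expansion of the identity
\[
|X(\varphi,\theta)+\xi\bfex|^2 = \xi^2 + 2\xi(\sqrt{2}+\cos\varphi)\cos\theta + 3+2\sqrt{2}\cos\varphi
\]
around $(\varphi,\theta)=(0,\pi)$ produces $|X+\xi\bfex|^2 \simeq \delta^2 + u^2 + (\sqrt{2}+1)^2 v^2$ with $\delta:=\xi-\sqrt{2}-1$, $u:=\varphi$, $v:=\theta-\pi$; a standard polar-coordinate computation in $(u,v)$ then shows that the localized contribution to $G(\xi)$ diverges like $\delta^{-2}$, so $G(\xi)\to+\infty$.

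Combining these ingredients, the intermediate value theorem gives, for every $\eta>0$, a unique $\xi_\eta\in(\sqrt{2}+1,\infty)$ with $G(\xi_\eta)=4\sqrt{2}\pi^2\eta^{-4}$. Applying the implicit function theorem to $\Psi(\xi,\eta):=G(\xi)-4\sqrt{2}\pi^2\eta^{-4}$, whose $\xi$-derivative is the nonzero quantity $G'(\xi_\eta)<0$, yields smoothness of $\eta\mapsto\xi_\eta$ together with $\xi_\eta' = -16\sqrt{2}\pi^2\eta^{-5}/G'(\xi_\eta)>0$, so the map is strictly increasing. Finally, the two limits of $\xi_\eta$ are immediate from $G(\xi_\eta)=4\sqrt{2}\pi^2\eta^{-4}$ combined with the strict monotonicity and the boundary values of $G$. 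The whole argument is essentially mechanical once the positivity trick gives strict monotonicity of $G$; the only mildly nontrivial step is the $\delta^{-2}$ blow-up estimate, which is a local polar-coordinate calculation.
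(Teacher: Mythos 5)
Your proof is correct, and its skeleton is the same as the paper's: reduce to the area function $A(\xi,\eta)=\eta^4 G(\xi)$, use strict monotonicity plus limiting behaviour to get existence and uniqueness, and then the implicit function theorem for smoothness and strict increase of $\eta\mapsto\xi_\eta$. The difference is in how the existence step is organised and how much is actually verified. The paper states the monotonicity $\partial_\eta A>0>\partial_\xi A$ as ``easily seen'' and records only the limits of $A$ in $\eta$ (which are trivial, since $A=\eta^4G(\xi)$); as written, that does not by itself give the intermediate value argument in $\xi$ for fixed $\eta$. You instead prove the two facts that are genuinely needed: the sign $G'(\xi)=-4\int_\T (x^1+\xi)|x+\xi\bfex|^{-6}\,d\sigma_\T<0$, via the observation $x^1\ge-(\sqrt2+1)>-\xi$ on $\T$, and the boundary behaviour $G(\xi)\to0$ as $\xi\to\infty$ together with the $\delta^{-2}$ blow-up of $G$ as $\xi\to(\sqrt2+1)^+$ (your expansion $|X+\xi\bfex|^2=\xi^2+2\xi(\sqrt2+\cos\varphi)\cos\theta+3+2\sqrt2\cos\varphi$ and the local quadratic form $\delta^2+u^2+(\sqrt2+1)^2v^2$ check out, and the polar-coordinate estimate is the same kind of computation the paper performs only later, in the proof of Lemma \ref{lem:degTori}). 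So your write-up is a more self-contained version of the same argument: what it buys is an honest intermediate value theorem in $\xi$ and an explicit formula $\xi_\eta'=-16\sqrt2\pi^2\eta^{-5}/G'(\xi_\eta)>0$, at the modest cost of the local blow-up estimate near the point of $\T$ closest to the inversion centre.
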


	\begin{proof}
First we notice that $\T \subset \R^{3} \backslash \{ \xi \bfex \}$ 
for each $\xi > \sqrt{2}+1$, so $\Phi_{-\xi \bfex, \eta} ( \T )$ 
is well-defined for every $\xi > \sqrt{2}+1$ and $\eta > 0$. 
Thus by \eqref{eq:area-element} and \eqref{1}, we observe 
	\[
		A(\xi, \eta) := 
		| \Phi_{-\xi \bfex, \eta} (\T) |_{g_{0}}
		= \eta^4  \int_{0}^{2\pi} \int_0^{2\pi} 
		\frac{\sqrt{2} + \cos \varphi} 
		{| X(\varphi,\theta) + \xi \bfex|^4} 
		d \varphi d \theta.
	\]
Moreover, it is easily seen that 
	\begin{equation}\label{eq:mono-A}
		A \in C^{\infty} 
		\left( \left( \sqrt{2} + 1 , \infty \right) \times (0,\infty) \right), 
		\quad 
		\frac{\partial A}{\partial \eta} (\xi, \eta ) > 0 > 
		\frac{\partial A}{\partial \xi} ( \xi , \eta) 
		\quad {\rm for\ all}\ (\xi,\eta) \in 
		\left( \sqrt{2} + 1 , \infty \right) \times (0,\infty). 
	\end{equation}
Thus from 
	\[
		\lim_{\eta \to 0} A(\xi,\eta) = 0, \qquad 
		\lim_{\eta \to \infty} A(\xi, \eta) = \infty,
	\]
we may find a unique $\xi_{\eta} > \sqrt{2} + 1$ so that 
$A(\xi_{\eta} , \eta ) = 4 \sqrt{2} \pi^{2}$. 
By the implicit function theorem, $\xi_{\eta}$ is smooth and 
strictly increasing from \eqref{eq:mono-A}. 
Finally, it is obvious to see that 
	\[
		\lim_{\eta \to 0} \xi_{\eta} = \sqrt{2}+1, \qquad 
		\lim_{\eta \to \infty} \xi_{\eta} = \infty, 
	\]
which completes the proof. 
	\end{proof}

\

\noindent First we observe the behaviour of 
$\xi_{\eta}$ and $\Phi_{-\xi_{\eta},\eta}$ as $\eta \to \infty$. 
%\noindent The following property is elementary to check. 

	\begin{lem}\label{102}
		As $\eta \to \infty$, there holds 
			\[
				\frac{\xi_{\eta}}{\eta} = 1 + O(\eta^{-2}), \quad 
				\Phi_{-\xi_{\eta} \bfex, \eta} (x) 
				\to x - 2 \la x, \bfex \ra \bfex 
				\quad {\rm in}\ C^{\infty}_{\rm loc}(\R^{3}). 
			\]
	\end{lem}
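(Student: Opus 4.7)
The plan is to exploit the explicit integral representation of $A(\xi,\eta)$ established in the proof of Lemma~\ref{101}, namely
\[
A(\xi,\eta)=\eta^4\int_0^{2\pi}\!\!\int_0^{2\pi}\frac{\sqrt 2+\cos\var}{|X(\var,\th)+\xi\bfex|^4}\,d\var\,d\th,
\]
and to Taylor-expand the integrand as $\xi\to\infty$. Writing $a(\var):=\sqrt 2+\cos\var$, one has
\[
|X(\var,\th)+\xi\bfex|^2=\xi^2+2\xi\,a(\var)\cos\th+a(\var)^2+\sin^2\var,
\]
so that, uniformly in $(\var,\th)\in[0,2\pi]^2$,
\[
|X(\var,\th)+\xi\bfex|^{-4}=\xi^{-4}\Bigl(1-4a(\var)\cos\th\,\xi^{-1}+O(\xi^{-2})\Bigr).
\]
The $O(\xi^{-1})$-correction vanishes after integration in $\th$, since $\int_0^{2\pi}\cos\th\,d\th=0$, leaving
\[
A(\xi,\eta)=\frac{\eta^4}{\xi^4}\Bigl(4\sqrt 2\,\pi^2+O(\xi^{-2})\Bigr).
\]
Imposing $A(\xi_\eta,\eta)=4\sqrt 2\,\pi^2$ and using $\xi_\eta\to\infty$ (already contained in Lemma~\ref{101}), this yields $(\eta/\xi_\eta)^4=1+O(\xi_\eta^{-2})$, hence $\xi_\eta/\eta=1+O(\eta^{-2})$, which is the first assertion.

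For the convergence of $\Phi_{-\xi_\eta\bfex,\eta}$, I would plug the just-obtained expansion $\xi_\eta=\eta+O(\eta^{-1})$ into the formula \eqref{eq:Phix0eta},
\[
\Phi_{-\xi_\eta\bfex,\eta}(x)=\frac{\eta^2}{|x+\xi_\eta\bfex|^2}(x+\xi_\eta\bfex)-\xi_\eta\bfex,
\]
and Taylor-expand. The relation
\[
|x+\xi_\eta\bfex|^{-2}=\xi_\eta^{-2}\Bigl(1-2\la x,\bfex\ra\,\xi_\eta^{-1}+O(\xi_\eta^{-2})\Bigr)
\]
holds uniformly on any compact $K\subset\R^3$; combining it with $\eta^2/\xi_\eta^2=1+O(\eta^{-2})$ and carrying out the bookkeeping (in which the leading $\xi_\eta\bfex$-terms cancel and one uses $O(\eta^{-2})\cdot\xi_\eta=O(\eta^{-1})$), one obtains
\[
\Phi_{-\xi_\eta\bfex,\eta}(x)=x-2\la x,\bfex\ra\bfex+O(\eta^{-1}).
\]
To upgrade to $C^\infty_{\rm loc}$ convergence, note that on any compact $K$ the singular point $-\xi_\eta\bfex$ lies at distance $\ge\xi_\eta/2$ from $K$ for $\eta$ large, so that $\Phi_{-\xi_\eta\bfex,\eta}$ is smooth on $K$; differentiating the rational expression in $x$ and repeating the same Taylor bookkeeping gives the analogous asymptotics for every $\pa_x^\alpha\Phi_{-\xi_\eta\bfex,\eta}$.

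The main technical subtlety is the error bookkeeping: one must verify that the apparent $O(\xi^{-1})$-deviation in $A(\xi,\eta)$ really cancels by the symmetry $\int_0^{2\pi}\cos\th\,d\th=0$, because a rate only $\xi_\eta/\eta=1+O(\eta^{-1})$ would render the cross-term $(\eta^2/\xi_\eta^2-1)\cdot\xi_\eta$ merely $O(1)$ and destroy the convergence of $\Phi_{-\xi_\eta\bfex,\eta}$; it is precisely the stronger $O(\eta^{-2})$-rate on $\xi_\eta/\eta-1$ that turns this cross-term into the benign $O(\eta^{-1})$ above, and with it the identification of the limit as the Euclidean reflection $x\mapsto x-2\la x,\bfex\ra\bfex$.
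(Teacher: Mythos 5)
Your proposal is correct and follows essentially the same route as the paper: the same Taylor expansion of the area integral $A(\xi,\eta)$ with the $\cos\theta$-term cancelling to get $\xi_\eta/\eta=1+O(\eta^{-2})$, and the same expansion of $\eta^2/|x+\xi_\eta\bfex|^2$ in the explicit formula for $\Phi_{-\xi_\eta\bfex,\eta}$ to identify the reflection limit. Your remarks on the cancellation of the leading $\xi_\eta\bfex$-terms and on differentiating the rational expression to upgrade to $C^\infty_{\rm loc}$ just make explicit what the paper leaves implicit.
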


	\begin{rem}\label{103}
		The map 
		$x \mapsto x - 2 \langle x, \bfex \rangle 
		\bfex$ 
		is the reflection with respect to the plane 
		$[x^1 = 0] := \{ (0,x^2,x^3) \; : \;  x^2,x^3 \in \R\}$.  
	\end{rem}

	\begin{proof}[Proof of Lemma \ref{102}]
First, we prove $\xi_{\eta} \eta^{-1} = 1 + O(\eta^{-2})$. 
Since $\xi_{\eta} \to \infty$ as $\eta \to \infty$, one has 
	\[
		| \xi_{\eta}^{-1} X + \bfex |^{4} 
		= 1 + 4 \xi_{\eta}^{-1} ( \sqrt{2} + \cos \varphi ) \cos \theta 
		+ O(\xi_{\eta}^{-2}).
	\]
Thus by a Taylor expansion we obtain 
	\[
		\begin{aligned}
			4 \sqrt{2} \pi^{2} 
			&= \frac{\eta^{4}}{\xi_{\eta}^{4}} 
			\int_{0}^{2\pi} \int_{0}^{2\pi} \frac{\sqrt{2}+\cos \varphi}
			{ | \xi_{\eta}^{-1} X + \bfex |^{4} } d \theta d \varphi 
			\\
			&= \frac{\eta^{4}}{\xi_{\eta}^{4}} 
			\int_{0}^{2\pi} \int_{0}^{2\pi} 
			( \sqrt{2} + \cos \varphi ) 
			\left\{ 1 - 4 \xi_{\eta}^{-1} ( \sqrt{2} + \cos \varphi ) 
			\cos \theta + O(\xi_{\eta}^{-2}) \right\} d \theta d \varphi.
		\end{aligned}
	\]
Noting that 
	\[
		\int_{0}^{2\pi} \int_{0}^{2\pi} ( \sqrt{2} + \cos \varphi )^{2} 
		\cos \theta d \theta d \varphi = 0,
	\]
we have 
	\[
		4 \sqrt{2} \pi^{2} = \frac{\eta^{4}}{\xi_{\eta}^{4}} 
		\left\{ 4\sqrt{2} \pi^{2} + O(\xi_{\eta}^{-2})  \right\}. 
	\]
Hence, $\xi_{\eta} \eta^{-1} \to 1$ as $\eta \to \infty$ and 
$\xi_{\eta} \eta^{-1} = 1 + O(\eta^{-2})$.

		For the latter claim, we first remark that 
	\[
		\Phi_{-\xi_{\eta} \bfex, \eta} (x) 
		= \frac{\eta^{2}}{| x + \xi_{\eta} \bfex|^{2}} x 
		+ \xi_{\eta} \left( \frac{\eta^{2}}{|x+\xi_{\eta}\bfex|^{2}} 
		- 1 \right) \bfex 
	\]
and 
	\[
		\frac{\eta^{2}}{|x+\xi_{\eta}\bfex|^{2}} = 
		\left( \frac{\eta}{\xi_{\eta}} \right)^{2} 
		\frac{1}{| \bfex + \xi_{\eta}^{-1} x |^{2}} 
		= ( 1 + O(\eta^{-2})) 
		\left( 1 - 2 \xi_{\eta}^{-1} \la x, \bfex \ra \bfex 
		+ O(\eta^{-2}) \right). 
	\]
Hence, 
	\[
		\Phi_{-\xi_{\eta} \bfex, \eta} (x) 
		= ( 1 + O(\eta^{-1})) x - 2 \la x , \bfex \ra \bfex 
		+ O(\eta^{-1}),
	\]
and the claim follows. 
	\end{proof}

\bigskip

\noindent		Next, we consider the behaviour of the M\"obius inversions 
as $\eta \to 0$. For this purpose, it is convenient to fix a symmetric 
point of $\Phi_{-\xi_{\eta} \bfex, \eta}$. 
Since either the translations or reflections do not affect areas, 
from Lemma \ref{101} there exists  $\tilde{\xi}_{\eta}>0$ 
for every $\eta > 0$ so that 
	\[
		| \Phi_{0,\eta} ( \T_{\tilde{\xi}_{\eta}} ) |_{g_{0}} 
		= 4 \sqrt{2} \pi^{2} \quad 
		{\rm where} \quad 
		\T_{\tilde{\xi}_{\eta}} := \T - \left( \sqrt{2} + 1 + \tilde{\xi}_{\eta} \right) 
		\bfex.
	\]
Furthermore, $\tilde{\xi}_{\eta}$ is smooth, strictly increasing in $(0,\infty)$ 
and satisfies 
	\[
		\lim_{\eta \to 0} \tilde{\xi}_{\eta} = 0.
	\]

\

\noindent
Next, we remark the following two properties of 
the M\"obius inversions:

\noindent 
(i) $\Phi_{0,\eta}(x) = \Phi_{0,1}( \eta^{-2} x)$ for every $\eta>0$ and 
$x \in \R^{3} \backslash \{0\}$. 

\noindent
(ii) For any $c_{0} \in \R\backslash \{0\}$, 
$\Phi_{0,1}$ maps the locus $\{x^{1}=c_{0}\}$ into a sphere 
of radius $1/(2 |c_{0}|)$ centred at $(c_{0}/ (2 |c_{0}|^{2}) ) \bfex$. 

\noindent
Then we define the following functions: 
	\[
		\begin{aligned}
			Y(\varphi,\theta, \eta ) &:= X(\varphi,\theta) - 
			\left( \sqrt{2} + 1 + \tilde{\xi}_{\eta} \right) \bfex, 
			\\
			Z(\bar{\varphi},\bar{\theta}, \eta) 
			&:= \Phi_{0,\eta} 
			\left( Y(\eta^2 \bar{\varphi}, \eta^2 \bar{\theta}, \eta) \right)
			= \Phi_{0,1} \left( 
			\eta^{-2} Y(\eta^2 \bar{\varphi}, \eta^2 \bar{\theta}, \eta)
			\right)
		\end{aligned}
	\]
for $(\bar{\varphi},\bar{\theta}) \in \R^{2}$. 
We shall show the following result:

\begin{lem}\label{lem:degTori}
{\rm (i)} 
$\lim_{\eta \to 0} \eta^{2} / \tilde{\xi}_{\eta} 
= 2 \sqrt[4]{2 \pi^{2}}$. 
Furthermore, $\eta^2 / \tilde{\xi}_\eta = 2 \sqrt[4]{2 \pi^2} + O(\eta^2)$. 
		
\noindent
{\rm (ii)} 
$\Phi_{0,\eta}(\T_{\tilde{\xi}_{\eta}})$ converges to 
the sphere with radius $\sqrt[4]{2\pi^{2}} $ 
centred at $-\sqrt[4]{2\pi^{2}} \bfex$ 
in the following sense: 
for any $R>0$ and $k \in \N$, if $\eta \leq 1/ R^4$, then 
	\[
		\left\| Z(\cdot, \cdot, \eta) - Z_0 \right\|_{C^k( [-R,R]^2 )}
		\leq C_{k} \eta^{3/2}
	\]
as $\eta \to 0$, where $C_k$ depends only on $k$ and 
$Z_0$ is defined by 
	\[
		Z_0(\bar{\varphi},\bar{\theta}) := 
		\Phi_{0,1} \left(  
		- \frac{1}{2 \sqrt[4]{2\pi^{2}}} \bfex
		 + (\sqrt{2}+1) \bar{\theta} \bfey + \bar{\varphi} \bfez
		 \right), \quad 
		 \bfey := ( 0, 1, 0), \quad 
		 \bfez := ( 0, 0, 1).
	\]
	\end{lem}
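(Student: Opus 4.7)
My plan is to analyze the defining area condition
\[
|\Phi_{0,\eta}(\T_{\tilde\xi_\eta})|_{g_0} = \eta^4 \int_0^{2\pi}\!\!\int_0^{2\pi} \frac{\sqrt{2}+\cos\varphi}{|Y(\varphi,\theta,\eta)|^4}\,d\varphi\,d\theta = 4\sqrt{2}\pi^2
\]
by identifying the concentration of the integrand. Since the point of $\T_{\tilde\xi_\eta}$ closest to the origin is $-\tilde\xi_\eta\bfex$ (attained at $\varphi=\theta=0$), as $\eta\to 0$ the kernel $1/|Y|^4$ develops a spike there. A Taylor expansion at $(\varphi,\theta)=(0,0)$ gives
\[
Y_1 = -\tilde\xi_\eta - \tfrac{1}{2}\varphi^2 - \tfrac{\sqrt{2}+1}{2}\theta^2 + O(\varphi^4+\theta^4), \quad Y_2 = (\sqrt{2}+1)\theta + O(\theta^3+\varphi^2\theta), \quad Y_3 = \varphi + O(\varphi^3),
\]
so to leading order $|Y|^2 \sim \tilde\xi_\eta^2 + \varphi^2 + (\sqrt{2}+1)^2\theta^2$. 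After the rescaling $\varphi = \tilde\xi_\eta v$, $\theta = \tilde\xi_\eta u/(\sqrt{2}+1)$, the Jacobian contributes $\tilde\xi_\eta^2/(\sqrt{2}+1)$ and the integral reduces to
\[
\frac{\eta^4}{\tilde\xi_\eta^2}\int_{\R^2}\frac{du\,dv}{(1+u^2+v^2)^2}\bigl(1+O(\tilde\xi_\eta^2)\bigr) = \frac{\eta^4}{\tilde\xi_\eta^2}\pi\bigl(1+O(\tilde\xi_\eta^2)\bigr).
\]
Matching with $4\sqrt{2}\pi^2$ yields $\eta^4/\tilde\xi_\eta^2 \to 4\sqrt{2}\pi$, equivalently $\eta^2/\tilde\xi_\eta \to 2\sqrt[4]{2\pi^2}$. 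The sharpening to the announced remainder follows from retaining the next order in the Taylor expansion and applying the implicit function theorem to the smooth map $(\eta,\tilde\xi)\mapsto |\Phi_{0,\eta}(\T_{\tilde\xi})|_{g_0}$, whose monotonicity in $\tilde\xi$ (analogous to Lemma \ref{101}) guarantees uniqueness and regularity of $\tilde\xi_\eta$.

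For part (ii), I would substitute the Taylor expansion of $Y$ directly into the definition of $Z$. Using $\Phi_{0,\eta}(x)=\Phi_{0,1}(\eta^{-2}x)$ and evaluating at $(\eta^2\bar\varphi,\eta^2\bar\theta)$,
\[
\eta^{-2}Y(\eta^2\bar\varphi,\eta^2\bar\theta,\eta) = -\frac{\tilde\xi_\eta}{\eta^2}\bfex + (\sqrt{2}+1)\bar\theta\bfey + \bar\varphi\bfez + O(\eta^2(|\bar\varphi|^2+|\bar\theta|^2)).
\]
Combining with part (i), the first coordinate tends to $-1/(2\sqrt[4]{2\pi^2})$, so by the smoothness of $\Phi_{0,1}$ away from the origin the limit $Z_0$ is exactly the one prescribed. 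The sphere identification then uses property (ii) above Lemma \ref{lem:degTori}: the image of the plane $\{x^1 = -1/(2\sqrt[4]{2\pi^2})\}$ is the sphere of radius $\sqrt[4]{2\pi^2}$ centred at $-\sqrt[4]{2\pi^2}\bfex$.

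The main obstacle, and the step that needs actual care, is the quantitative $C^k$-estimate on $[-R,R]^2$ with $R\leq \eta^{-1/4}$. The difficulty is twofold: (a) the error terms $O(\eta^2(|\bar\varphi|^2+|\bar\theta|^2))$ in the pre-inversion expansion grow with $R$, so once composed with $\Phi_{0,1}$ — whose derivatives at the relevant point are bounded — one has to verify that the range $R\leq \eta^{-1/4}$ is precisely what balances these growing errors against the power of $\eta$, producing the $\eta^{3/2}$ rate; (b) controlling $C^k$ norms requires differentiating the expansion $k$ times, each derivative in $\bar\varphi,\bar\theta$ producing at most one extra factor of $\eta^2$ from the chain rule that cancels the $\eta^{-2}$ scaling, so one must check that no derivatives of $\Phi_{0,1}$ near $Z_0$ blow up on the relevant compact set. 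Both issues are resolved by noting that on $[-R,R]^2$ with $R\leq\eta^{-1/4}$ the argument $\eta^{-2}Y$ stays in a fixed compact set bounded away from the origin (since its $\bfex$-component converges to $-1/(2\sqrt[4]{2\pi^2})\ne 0$), so $\Phi_{0,1}$ and all its derivatives are uniformly Lipschitz there, allowing the errors to be propagated in $C^k$ with the stated rate.
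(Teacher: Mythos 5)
Your proposal is correct and follows essentially the same route as the paper's proof: part (i) comes from the fixed-area identity by isolating the concentration of the M\"obius kernel at the point of $\T_{\tilde{\xi}_\eta}$ nearest the origin and reducing to the model integral $\int_{\R^2}(1+u^2+v^2)^{-2}\,du\,dv=\pi$ (the paper carries this out via a splitting of the parameter domain at scale $\eta$ with two-sided bounds on $|Y|^2$), and part (ii) by Taylor-expanding $Y$ at scale $\eta^2$, observing that the dominant remainder is $O(\eta^2 R^2)\le O(\eta^{3/2})$ when $R\le \eta^{-1/4}$, and composing with $\Phi_{0,1}$ using its $C^k$-bounds away from the origin, the first coordinate staying near $-1/(2\sqrt[4]{2\pi^2})$. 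Two small inaccuracies, neither affecting the conclusion: the relative error in your model-integral reduction is $O(\tilde{\xi}_\eta)=O(\eta^2)$ rather than $O(\tilde{\xi}_\eta^2)$, because of the cross term $\tilde{\xi}_\eta\{\varphi^2+(\sqrt{2}+1)\theta^2\}$ in the expansion of $|Y|^2$ and the $O(\eta^2)$ contribution of the region away from the concentration point — which is exactly the accuracy the lemma requires; and for $R\le\eta^{-1/4}$ the argument $\eta^{-2}Y$ does not stay in a fixed compact set (its $\bfey,\bfez$ components may be of size $\eta^{-1/4}$), but it stays in $\{|x|\ge r\}$ for a fixed $r>0$, where $\Phi_{0,1}$ and all its derivatives are bounded, which is all that is needed.
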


	\begin{proof}
For assertion (i), it is enough to prove 
	\begin{equation}\label{eq:eta2-xi}
		\frac{\eta^{4}}{\tilde{\xi}_{\eta}^{2}} 
		= 4 \sqrt{2} \pi + O(\eta^2) 
	\end{equation}
as $\eta \to 0$. To this aim, for each $\eta \in (0,1]$, we set 
	\[
		I_{\eta} := \{ (\varphi,\theta) \in [-\pi,\pi]^{2} \; : \;
		\varphi^{2} + (\sqrt{2}+1)^{2} \theta^{2} \leq \eta^{2} \}, 
		\qquad 
		J_{\eta} := [-\pi,\pi]^{2} \setminus I_{\eta}. 
	\]
Recall that $\Phi_{0,\eta}(\T_{\tilde{\xi}_{\eta}})$ has the fixed area 
$4 \sqrt{2} \pi^{2}$: 
	\begin{equation}\label{eq:area}
		4 \sqrt{2} \pi^{2} = \eta^{4} \int_{-\pi}^{\pi} \int_{-\pi}^{\pi} 
		\frac{\sqrt{2}+\cos \varphi}{|Y(\varphi,\theta,\eta)|^{4}} 
		d \varphi d \theta 
		= \eta^{4} \left( \int_{J_{\eta}} + \int_{I_{\eta}} \right) 
		\frac{\sqrt{2}+\cos \varphi}{|Y(\varphi,\theta,\eta)|^{4}} 
		d \varphi d \theta 
		=: \tilde{I}_{1} + \tilde{I}_{2}
	\end{equation}
where we used $[-\pi,\pi]^{2}$ instead of $[0,2\pi]^{2}$.

		First we shall show $\tilde{I}_{1} = O(\eta^{2})$. 
By a Taylor expansion of $Y$ around $(0,0)$, we have 
	\begin{equation}\label{eq:exp-Y}
		\begin{aligned}
			|Y(\varphi,\theta,\xi_{\eta})|^{2} 
			&=\varphi^{2} + (\sqrt{2}+1)^{2} \theta^{2} + \tilde{\xi}_{\eta}^{2} 
			+ \left\{ (\sqrt{2}+1) \theta^{2} + \varphi^{2} \right\} 
			\tilde{\xi}_{\eta} + O( \varphi^{4} + \theta^{4}).
		\end{aligned} 
	\end{equation}
Thus we may find  $C_{0}>0$, which is independent of $\eta \in (0,1]$,  so that 
	\[
		C_{0}(\varphi^{2} + (\sqrt{2}+1)^{2} \theta^{2} ) 
		\leq |Y(\varphi,\theta,\eta)|^{2} 
		\quad 
		{\rm for\ all}\ (\varphi,\theta) \in J_{\eta} \quad 
		{\rm and}\quad \eta \in (0,1].
	\]
Hence, using the change of variables 
$(\varphi,\theta)=( r \cos \Theta,  (\sqrt{2}+1)^{-1} r \sin \Theta)$ and 
noting that $J_{\eta} \subset \{ (r,\Theta) \in [\eta,4\pi] \times [0,2\pi] \}$, 
we obtain 
	\[
		\int_{J_\eta} 
		\frac{\sqrt{2}+\cos \varphi}{|Y(\varphi,\theta,\eta)|^{4}} 
		d \varphi d \theta 
		\leq 
		C_{1} \int_{J_{\eta}} 
		\frac{d \varphi d\theta}
		{(\varphi^{2} + (\sqrt{2}+1)^{2} \theta^{2})^{2}}
		= \frac{C_{1}}{\sqrt{2}+1} 
		\int_{\eta}^{4\pi} \int_{0}^{2\pi} 
		\frac{r}{r^{4}} d r d \Theta 
		\leq C_{2} \eta^{-2}.
	\]
Thus multiplying the above inequality by $\eta^{4}$ 
we get $\tilde{I}_{1}=O(\eta^{2})$.

		Next, we examine $\tilde{I}_{2}$.  
Recalling \eqref{eq:exp-Y}, we have 
	\begin{equation}\label{eq:ineq-Y2}
		\left\{1-O(\eta^{2}) - \tilde{\xi}_{\eta} \right\} 
		( \varphi^{2} + (\sqrt{2}+1)^{2} \theta^{2} ) + \tilde{\xi}_{\eta}^{2}
		\leq |Y(\varphi,\theta,\eta)|^{2} 
		\leq \left\{ 1+O(\eta^{2}) + \tilde{\xi}_{\eta}\right\} 
		\left\{ \varphi^{2} + (\sqrt{2}+1)^{2} \theta^{2} \right\} 
		+ \tilde{\xi}_{\eta}^{2}
	\end{equation}
for every $(\varphi,\theta) \in I_{\eta}$. 
Now we compute 
	\[
		\eta^{4} \int_{I_{\eta}} \frac{(\sqrt{2}+1) d \varphi d \theta}
		{\left[  \left\{1+O(\eta^{2}) + \tilde{\xi}_{\eta} \right\} \left\{ \varphi^{2} 
		+ (\sqrt{2}+1)^{2} \theta^{2} \right\} + \tilde{\xi}_{\eta}^{2} \right]^{2}}. 
	\]
Using the same change of variables as above and noting 
that $\tilde{\xi}_{\eta} \to 0$ as $\eta \to 0$, we have 
	\begin{equation}\label{eq:est-I-eta}
		\begin{aligned}
			&\eta^{4} \int_{I_{\eta}} \frac{(\sqrt{2}+1) d \varphi d \theta}
			{\left[  \left\{1+O(\eta^{2}) + \tilde{\xi}_{\eta}\right\} \left\{ \varphi^{2} 
			+ (\sqrt{2}+1)^{2} \theta^{2} \right\} + \tilde{\xi}_{\eta}^{2} \right]^{2}}
			\\ 
			= & 2 \pi \eta^{4} 
			\int_{0}^{\eta} \frac{r dr}
			{ \left[\left\{1 + O(\eta^{2}) + \tilde{\xi}_{\eta} \right\}r^{2} 
			+ \tilde{\xi}_{\eta}^{2} \right]^{2}  } 
			\\
			=& \frac{\pi \eta^{4}}{1+O(\eta^{2}) + \tilde{\xi}_{\eta}} 
			\left( \frac{1}{\tilde{\xi}_{\eta}^{2}} 
			- \frac{1}
			{\eta^{2} \left( 1 + \tilde{\xi}_{\eta} + O(\eta^{2}) \right) 
			+ \tilde{\xi}_{\eta}^{2}  } 
			\right)
			\\
			=& \left\{ 1  - \tilde{\xi}_{\eta} - O(\eta^{2})
			- O(\tilde{\xi}_{\eta}^{2}) \right\} \pi 
			\left( \frac{\eta^{4}}{\tilde{\xi}_{\eta}^{2}} + O(\eta^{2}) \right). 
		\end{aligned}
	\end{equation} 
Combining \eqref{eq:area}, \eqref{eq:ineq-Y2} and \eqref{eq:est-I-eta} 
with $\tilde{I}_{1}=O(\eta^{2})$ and 
$\sqrt{2} + \cos \varphi = \sqrt{2} + 1 + O(\eta^{2})$ in $I_{\eta}$, 
one may observe that 
	\begin{equation}\label{eq:xi-eta}
		\left\{ 1 - \tilde{\xi}_{\eta} + O(\eta^{2}) + O(\tilde{\xi}_{\eta}^{2} ) 
		\right\} \frac{\eta^{4}}{\tilde{\xi}_{\eta}^{2}}  
		= 4 \sqrt{2} \pi + O(\eta^{2}). 
	\end{equation}
Since $\tilde{\xi}_{\eta} \to 0$ as $\eta \to 0$, we have 
$\eta^4 / \tilde{\xi}_\eta^2 \to 4 \sqrt{2} \pi$. 
Hence, combining \eqref{eq:xi-eta}, we get \eqref{eq:eta2-xi}.

\

		For assertion (ii), due to a Taylor expansion, 
we first remark that 
	\[
		\cos \theta = 1 + R_c(\theta), \quad 
		\sin \theta = \theta + R_s (\theta)
	\]
where 
	\begin{equation}\label{eq:RcRs}
		\left| \frac{d^k}{d \theta^k} R_c(\theta)  \right| 
		\leq C_k |\theta|^{(2-k)_+}  , \quad 
		\left| \frac{d^k}{d \theta^k} R_s(\theta) \right| 
		\leq C_k | \theta |^{(3-k)_+}  
	\end{equation}
for all $k \in \N$ and $| \theta | \leq 1$ where $a_+:=\max\{0,a\}$. 
Using these expansions, we obtain 
	\[
		\eta^{-2} Y(\eta^{2} \bar{\varphi}, \eta^{2} \bar{\theta}, \eta ) 
		= \left( -\eta^{-2} \tilde{\xi}_\eta + R_{Y,\eta,1}(\bar{\varphi}, \bar{\theta}), 
		\quad (\sqrt{2} + 1) \bar{\theta} + R_{Y,\eta,2} 
		(\bar{\varphi}, \bar{\theta}) , \quad 
		\bar{\varphi} + R_{Y,\eta,3}(\bar{\varphi},\bar{\theta})  \right)
	\]
where 
	\begin{equation*}
		\begin{aligned}
			R_{Y,\eta,1} (\bar{\varphi}, \bar{\theta}) 
			&:=  (\sqrt{2}+1) \eta^{-2} R_c(\eta^2 \bar{\theta}) 
			+ \eta^{-2} R_c (\eta^2 \bar{\varphi}) 
			+ \eta^{-2} R_c (\eta^2 \bar{\varphi}) R_c (\eta^2 \bar{\theta} ),
			\\
			R_{Y,\eta,2} (\bar{\varphi},\bar{\theta}) &:= 
			(\sqrt{2} + 1) \eta^{-2} R_s(\eta^2 \bar{\theta} ) 
			+  \bar{\theta} R_c(\eta^2 \bar{\varphi}) 
			+ \eta^{-2} 
			R_c( \eta^2 \bar{\varphi} ) R_s( \eta^2 \bar{\theta} ),
			\\
			R_{Y,\eta,3} (\bar{\varphi}, \bar{\theta}) &:= 
			\eta^{-2} R_s(\eta^2 \bar{\varphi}).
		\end{aligned}
	\end{equation*}
By \eqref{eq:RcRs}, it is easy to verify that  
	\[
		\left\| R_{Y,\eta,1} \right\|_{C^k([-R,R]^2)} 
		\leq C_k \eta^{3/2}, \quad 
		\left\| R_{Y,\eta,2} \right\|_{C^k([-R,R]^2)} 
		\leq C_k \eta^3, \quad 
		\left\| R_{Y,\eta,3} \right\|_{C^k([-R,R]^2)} 
		\leq C_k \eta^{3}
	\]
for all $k \in \N$ provided $\eta \leq 1/R^4$. 
Noting the assertion (i) and 
	\[
		\| \Phi_{0,1} \|_{C^k (\R^3 \setminus B_{r}(0) ) } 
		\leq C_{r}
	\]
for each $r>0$, the assertion (ii) holds. 
	\end{proof}

\

\noindent By Lemmas \ref{101} \ref{102}, \ref{lem:degTori} 
and Remark \ref{103}, composing the inversions in Lemma \ref{101} 
with suitable translations, rotations and reflections, 
one can obtain a smooth two-dimensional family 
of transformations of the Clifford torus $\T$ which contains the identity and which degenerates to spheres centred on the $xy$-plane 
and passing through the origin of $\R^3$. Even though the identity map corresponds to choosing $\eta = \infty$ in Lemma \ref{101}, 
the smoothness of the entire family (near  the identity map) can be checked noticing that the above inversions can also be 
obtained composing  M\"obius transformations in $S^3$ (including the identity map) and stereographic projections from $S^3$ to $\R^3$.  
To summarize the discussion, we state the following proposition.

\begin{pro}\label{p:disk} There exists a smooth family of conformal immersions $T_\omega$ of $\T$ into $\R^3$, parametrized by 
$\o \in \DD$, $\DD$ being the unit disk of $\R^2$, which preserve the area of $\T$ and for which the following  hold 

\item{a)}  $T_0 = {\rm Id}$;  

\item{b)} for $\o \neq 0$, $T_\omega$ is an inversion with respect to a sphere centred at a point in $\R^3$ aligned to $\o$ (viewed as  
an element of $\R^3$); 

\item{c)} as $|\omega|$ approaches $1$, $T_\omega(\T)$ degenerates to a sphere of radius $\sqrt[4]{2\pi^{2}} $ centred at 
$\sqrt[4]{2\pi^{2}} \frac{\o}{|\o|}$.

In what follows, we use the symbol $\T_{\omega}$ for $T_{\omega}(\T)$.

\end{pro}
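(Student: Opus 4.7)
The plan is to assemble $T_\omega$ from the inversions of Lemma \ref{101}, allowing the inversion center to lie in any direction of the $xy$-plane (using the $z$-axis rotational invariance of $\T$) and composing with a reflection so that $\eta=\infty$ yields the identity rather than a reflection. Smoothness through $\omega=0$ will then be verified by transferring the family to a smooth one on the Lie group $\mathrm{Conf}(S^3)$ via stereographic projection.

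Concretely, for $\omega = \rho\, e^{i\alpha} \in \DD\setminus\{0\}$ I would set $e_\alpha := \cos\alpha\,\bfex + \sin\alpha\,\bfey$, write $R_{e_\alpha}\colon x \mapsto x - 2\la x, e_\alpha\ra e_\alpha$ for the reflection in the perpendicular hyperplane, and pick a monotone smooth diffeomorphism $\eta\colon (0,1)\to (0,\infty)$ with $\eta(\rho)\to\infty$ as $\rho\to 0^+$ and $\eta(\rho)\to 0$ as $\rho\to 1^-$. Then set
\[
T_\omega := R_{e_\alpha}\circ \Phi_{-\xi_{\eta(\rho)} e_\alpha,\, \eta(\rho)}\big|_{\T} \quad (\omega\neq 0), \qquad T_0 := \mathrm{Id}|_{\T}.
\]
Properties (a)--(c) and area preservation are then immediate from the preparatory lemmas. (a) holds by definition; area preservation follows since the rotational symmetry of $\T$ about the $z$-axis reduces the $e_\alpha$-case to the $\bfex$-case of Lemma \ref{101}, while $R_{e_\alpha}$ is an isometry. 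For (b), the composition of an inversion with a reflection is itself an inversion, and by symmetry its center lies on $\R e_\alpha$, the line through the origin aligned with $\omega$. For (c), Lemma \ref{lem:degTori} (rotated to the direction $e_\alpha$) yields that $\Phi_{-\xi_\eta e_\alpha,\eta}(\T)$ converges, as $\eta\to 0$, to a sphere of radius $\sqrt[4]{2\pi^2}$; tracking the center through the extra reflection and the rotation by $\alpha$ places it at $\sqrt[4]{2\pi^2}\, \omega/|\omega|$, as claimed.

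The main difficulty is smoothness of $\omega\mapsto T_\omega$ at $\omega=0$, where the explicit formula is singular because $\xi_{\eta(\rho)}\to\infty$ as $\rho\to 0^+$ and is only saved from blowing up by a delicate cancellation with $R_{e_\alpha}$ (cf.\ Lemma \ref{102}). To verify smoothness rigorously I would lift the construction to $S^3$ via a stereographic projection $\sigma\colon S^3\setminus\{N\}\to\R^3$ chosen so that $\sigma^{-1}(\T)$ is a Clifford torus in $S^3$ bounded away from $N$. Under $\sigma$ every M\"obius inversion of $\R^3$ and every reflection lift to elements of the finite-dimensional connected Lie group $\mathrm{Conf}(S^3)$ of conformal diffeomorphisms of $S^3$; in this lifted picture the degenerate case $\eta=\infty$ corresponds to a regular element and the composition $R_{e_\alpha}\circ \Phi_{-\xi_\eta e_\alpha, \eta}$ tends to the identity of $\mathrm{Conf}(S^3)$. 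Choosing $\eta(\rho)$ so that the lifted family realizes the exponential parametrization of a two-dimensional submanifold of $\mathrm{Conf}(S^3)$ through the identity --- spanned by the two infinitesimal ``hyperbolic translations'' in the perpendicular $xy$-directions --- yields smoothness in $\omega$ on the full disk $\DD$ by general Lie-group theory. Pushing this smoothness back via $\sigma$, which is a local diffeomorphism on the relevant region, concludes the proof.
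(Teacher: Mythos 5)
Your construction follows the same route as the paper's (the area-preserving inversions of Lemma \ref{101}, a reflection to turn the $\eta=\infty$ end into the identity, cf.\ Lemma \ref{102} and Remark \ref{103}, degeneration via Lemma \ref{lem:degTori}, and smoothness at $\omega=0$ by lifting to conformal transformations of $S^3$), but there is a concrete gap in your verification of c): you omit the ``suitable translations'' that the paper composes with, and as a result the limiting sphere is not where the proposition says it is. Indeed, writing $\Phi_{-\xi e,\eta}(x)=\Phi_{0,\eta}(x+\xi e)-\xi e$ with $e=e_\alpha$, using $\T+\xi_\eta e=R_{e}(\T-\xi_\eta e)$, $\Phi_{0,\eta}\circ R_e=R_e\circ\Phi_{0,\eta}$, and Lemma \ref{lem:degTori} rotated to the direction $e$, one gets that $\Phi_{-\xi_\eta e,\eta}(\T)$ converges, as $\eta\to 0$, to the sphere of radius $\sqrt[4]{2\pi^2}$ centred at $\bigl(\sqrt[4]{2\pi^{2}}-(\sqrt2+1)\bigr)e$, because the translation by $\xi_\eta e$ does not vanish in the limit ($\xi_\eta\to\sqrt2+1$). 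After your outer reflection $R_{e_\alpha}$ the centre sits at $\bigl((\sqrt2+1)-\sqrt[4]{2\pi^{2}}\bigr)\,\omega/|\omega|\approx 0.31\,\omega/|\omega|$, and the sphere passes through $(\sqrt2+1)\,\omega/|\omega|$ (the limit of the inversion centres, where the handle concentrates), not through the origin. So c) fails for your family as written, and you also lose the normalisation used later (Proposition \ref{p:expdegtorus} takes the origin as the concentration point of the shrinking handles). The remedy is precisely the paper's: compose with $\omega$-dependent translations interpolating between no translation at $\omega=0$ and the translation bringing the inversion centre to the origin as $|\omega|\to 1$ (equivalently, match your normalisation near $\omega=0$ smoothly with the normalisation $\Phi_{0,\eta}(\T_{\tilde\xi_\eta})$ of Lemma \ref{lem:degTori} near $|\omega|=1$).

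Two smaller points. In b), ``the composition of an inversion with a reflection is itself an inversion'' is false: an inversion is orientation reversing, so the composition is orientation preserving. What is true, and suffices in the loose sense in which the proposition must anyway be read, is that $R_{e_\alpha}\circ\Phi_{-\xi_\eta e_\alpha,\eta}=\Phi_{\xi_\eta e_\alpha,\eta}\circ R_{e_\alpha}$ and $R_{e_\alpha}(\T)=\T$, so $T_\omega$ coincides with an inversion centred at a point aligned with $\omega$ up to precomposition with a symmetry of $\T$. Concerning smoothness at $\omega=0$, your lifting to $\mathrm{Conf}(S^3)$ is exactly the paper's device, but your stronger claim that a choice of $\eta(\rho)$ makes the lifted family an exponential parametrization spanned only by the two translation-type generators is unjustified: reparametrizing a curve cannot force it into a one-parameter subgroup, and the logarithm of the area-preserving family, which lies in the three-dimensional algebra of conformal fields commuting with rotations about $\R e_\alpha$, generically also carries a rotation-invariant (dilation-type) component; joint smoothness in $\omega$ at the origin therefore still requires an argument on the $\rho$-dependence of these coefficients (a step which, admittedly, the paper itself only sketches).
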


\begin{rem}\label{r:projective}
If we wish to rotate in $\R^3$ the images $\T_{\omega}$,  the resulting  surfaces can be 
parametrized by the family of tangent vectors to $\R \P^2$ with length less than $1$.   
\end{rem}

%%%%%%%%%%%%%%%%%%%%%%%%%%%%%%%%%%%%%%%%%%%%
%%%%%%%%%%%%%%%%%%%%%%%%%%%%%%%%%%%%%%%%%%%%

\subsection{Basic properties of the Willmore  functional} \label{SS:BasicW}

%%%%%%%%%%%%%%%%%%%%%%%%%%%%%%%%%%%%%%%%%%%%
%%%%%%%%%%%%%%%%%%%%%%%%%%%%%%%%%%%%%%%%%%%%

		In this subsection, we state basic properties of the Willmore functional 
as well as the non-degeneracy condition of $\T$. 
First of all, we begin with a basic property of the Willmore functional $W_0$ 
in the Euclidean space for immersions $i : \Sigma \to \R^3$ 
$$
     W_0(i(\Sigma)) = \int_{\Sigma} H^2 d \s,  
$$
namely its conformal invariance. 

\begin{pro}\label{p:Mobinv}
Let $\Sigma$ be a compact closed surface of class $C^2$  and let $i : \Sigma \to \R^3$ be an immersion. 
Then, if $\l > 0$ and if  $\Phi_{x_0,\eta}$ is as in \eqref{eq:Phix0eta}, one has the invariance properties 
$$
  j) \quad W_0(\l i(\Sigma)) = W_0(i(\Sigma)) \quad \quad \hbox{ and } \quad \quad jj) \quad W_0((\Phi_{x_0,\eta} 
  \circ i)(\Sigma)) = W_0(i(\Sigma)) 
  \quad \hbox{ provided } x_0 \not\in i(\Sigma). 
$$
\end{pro}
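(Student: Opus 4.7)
The plan is to handle (j) by a direct scaling check and (jj) by reducing the statement to the pointwise conformal invariance of $|\mathring{A}|^2\, d\sigma$, together with the Gauss--Bonnet theorem.

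For the scaling invariance (j), I would simply note that under the dilation $x \mapsto \lambda x$ with $\lambda>0$, the induced metric rescales by $\lambda^2$, so $d\sigma$ transforms as $d\sigma \mapsto \lambda^2 d\sigma$, while the second fundamental form $A_{ij}$ satisfies $A_{ij} \mapsto \lambda A_{ij}$ with respect to the rescaled metric $\bar{g}_{ij} \mapsto \lambda^2 \bar{g}_{ij}$, giving $H = \bar{g}^{ij}A_{ij} \mapsto \lambda^{-1}H$. Hence $H^2\, d\sigma$ is preserved pointwise, and (j) follows by integration.

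For (jj), the key identity in $\R^3$ is the Gauss equation $|A|^2 = H^2 - 2K$ (with $K$ the Gauss curvature of the surface, i.e.\ the sectional curvature of the tangent plane of $i(\Sigma)$), which combined with $|\mathring{A}|^2 = |A|^2 - \tfrac{1}{2}H^2$ yields
\[
H^2 = 2|\mathring{A}|^2 + 4K.
\]
Integrating over $\Sigma$ and invoking Gauss--Bonnet,
\[
W_0(i(\Sigma)) = 2\int_\Sigma |\mathring{A}|^2\, d\sigma + 8\pi\, \chi(\Sigma).
\]
Since $\chi(\Sigma)$ is a topological invariant and $\Phi_{x_0,\eta}\circ i$ is a $C^2$ immersion diffeomorphic to $i$ (as $x_0 \notin i(\Sigma)$), it suffices to show
\[
\int_{i(\Sigma)} |\mathring{A}|^2\, d\sigma = \int_{(\Phi_{x_0,\eta}\circ i)(\Sigma)} |\mathring{A}|^2\, d\sigma.
\]

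To establish this, I would use the pointwise conformal invariance of the 2-form $|\mathring{A}|^2\, d\sigma$. Explicitly, $\Phi_{x_0,\eta}$ is a conformal diffeomorphism of $\R^3\setminus\{x_0\}$ with conformal factor $\mu(x) = \eta^2/|x-x_0|^2$, meaning the induced metric on the image surface satisfies $\bar{g}_{ij}^{\text{new}} = \mu^2 \bar{g}_{ij}$, and under such a conformal change one has the standard transformation $d\sigma \mapsto \mu^2\, d\sigma$ and $\mathring{A}_{ij} \mapsto \mu\, \mathring{A}_{ij}$ (the trace-free part of the second fundamental form picks up exactly one factor of $\mu$, whereas $H$ acquires an extra inhomogeneous term involving $\partial_n \log \mu$). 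Consequently $|\mathring{A}|^2_{\bar{g}^{\text{new}}} = \mu^{-2}|\mathring{A}|^2_{\bar{g}}$, and the product $|\mathring{A}|^2\, d\sigma$ is invariant pointwise, which proves the integral identity above and hence (jj).

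The only genuine calculation to carry out is the transformation rule $\mathring{A} \mapsto \mu\, \mathring{A}$ under the conformal change induced by $\Phi_{x_0,\eta}$; this is the main (and essentially only) technical point, and it is most cleanly obtained either by direct computation with $\Phi_{x_0,\eta}$ in normal coordinates or by invoking the general formula for how the second fundamental form transforms under a conformal change of ambient Euclidean metric, separating the trace and trace-free parts.
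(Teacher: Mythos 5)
The paper never actually proves Proposition \ref{p:Mobinv}: it is recalled as a classical fact (the conformal invariance of the Euclidean Willmore energy, going back to Blaschke and Thomsen; cf.\ \cite{WEINER}), so there is no in-paper argument to compare yours with, and the only question is whether your blind proof is sound — it is, and it follows the standard route. Part (j) is the elementary scaling count $d\sigma\mapsto\lambda^{2}d\sigma$, $H\mapsto\lambda^{-1}H$. For (jj) you combine $|\mathring{A}|^{2}=|A|^{2}-\tfrac12 H^{2}$ with the Gauss equation $|A|^{2}=H^{2}-2K$ (note $K$ here is the \emph{intrinsic} Gauss curvature of the induced metric, i.e.\ the product of principal curvatures, not the ambient sectional curvature of the tangent plane, which vanishes — a slip of wording only), obtaining $W_{0}(i(\Sigma))=2\int_{\Sigma}|\mathring{A}|^{2}\,d\sigma+8\pi\chi(\Sigma)$ by Gauss--Bonnet; then you use that $\Phi_{x_{0},\eta}$ is a conformal diffeomorphism of $\R^{3}\setminus\{x_{0}\}$ with factor $\mu=\eta^{2}/|x-x_{0}|^{2}$ (consistent with \eqref{1}) and the pointwise conformal invariance of $|\mathring{A}|^{2}\,d\sigma$, while $\chi(\Sigma)$ is unchanged because $\Phi_{x_{0},\eta}\circ i$ is again a $C^{2}$ immersion of the same abstract surface (here is where $x_{0}\notin i(\Sigma)$ enters). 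The one point where the write-up is not self-contained is precisely the step you flag: the law $\mathring{A}\mapsto\mu\,\mathring{A}$ is asserted, not derived. For completeness you should include that short computation: for an ambient conformal change $e^{2u}g_{0}$, with the paper's convention $A(X,Y)=g(\nabla_{X}n,Y)$, one has $\tilde{n}=e^{-u}n$, $\tilde{A}_{ij}=e^{u}\bigl(A_{ij}+(\partial_{n}u)\,\bar{g}_{ij}\bigr)$, hence $\tilde{H}=e^{-u}\bigl(H+2\,\partial_{n}u\bigr)$ and $\tilde{\mathring{A}}_{ij}=e^{u}\mathring{A}_{ij}$; applying this with $u=\log\mu$ (smooth along the surface since $x_{0}\notin i(\Sigma)$, and with the irrelevance of the normal's sign since only squares appear) gives $|\mathring{A}|^{2}\,d\sigma$ invariant and completes (jj). With that inserted, your argument is a correct and complete proof of the proposition.
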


\

\noindent We also recall the first and second variation formulas for the Willmore functional 
$$
  W(i(\Sigma)) = \int_{\Sigma} H^2 d \s 
$$
in a general setting, namely for a surface immersed in a three-dimensional manifold $M$. We perturb the 
surface through  a  variation with normal speed $\var \, n$, where $n$ stands for the 
unit outer normal to the image of $\Sigma$. 
We denote the perturbed surfaces by $F(t,p)$ ($t \in (-\e ,\e)$, $p \in \Sigma$). Calling $n=n(t,p)$  the outward pointing unit normal to the immersion $F(t,\cdot)$ we denote with $\varphi:=g(n,  \partial_t F)$ the normal velocity,
we also let $\bar{g}_{ij}$ be the 
first fundamental form of $i(\S)$ and by $d \s$ the induced area element. 

We denote by $\Riem$ the Riemann curvature tensor of $M$ at $P$, 
by $\Ric$ the Ricci tensor, by $\Sc$ the scalar curvature, 
by $A$ the second fundamental form of $i(\Sigma)$, by $\mathring{A} = A - \frac{1}{2} H \bar{g}$ the 
traceless part of $A$, by $\Delta$ the Laplace-Beltrami operator on $(\Sigma,\bar{g})$ and we define the elliptic, self-adjoint operator 
\[ 
 L \var  := - \D  \var - \var \left( |A|^2 + \Ric(n, n) \right). 
\]
Here, for $\Riem$ and $A$, we use the following convention: 
	\[
		\Riem (X,Y)Z = \n_X \n_Y Z - \n_Y \n_X Z - \n_{[X,Y]} Z, \quad 
		A(X,Y) = g \left( \n_X n, Y \right). 
	\]
We also define the one-form $\varpi$ to be the tangent component of the one-form $\Ric(n, \cdot)$ in $M$, namely 
$$
  \varpi = \Ric(n, \cdot)^{t}, 
$$
and the  $(2,0)$ tensor $T$ by 
$$
  T_{ij} = \Riem(\pa_i, n, n, \pa_j) = \Ric_{ij} + G(n, n) \bar{g}_{ij},
$$
where $G = \Ric - \frac{1}{2} \Sc  g$ is the Einstein tensor of $M$. 

We have then the following formulas, see Section 3 in \cite{LMS}.

\begin{pro}\label{p:1-2-var}
With the above notation we have the formulas 
$$
  W'(i(\Sigma))[\var] = \int_{\S } \left( L H + \frac{1}{2} H^3 \right) \var \, d \s 
$$
and 
\[
	\begin{aligned}
		 W''(i(\Sigma))[\var,\var] 
		 & =  
		 2 \int_{\S } \left[ (L \var)^2 + 
		 \frac{1}{2} H^2 |\n \var |^2 - 2 \mathring{A}(\n \var , \n \var ) \right] 
		 d \s \\ 
  		& \quad + 2 \int_{\S} \var^2  \bigg( |\n  H|^2_{g} + 2 \varpi(\n H) 
		+ H \D H + 2 g(\n^2 H, \mathring{A}) + 2 H^2 |\mathring{A}|^2_{g} 
		\\
		& \quad +  2 H g (\mathring{A}, T) - H  g (\n_n  \Ric) (n, n) 
		- \frac{1}{2} H^2 |A|^2_{g} - \frac{1}{2} H^2 \Ric(n, n) \bigg) d \s  
		\\
		& \quad + \int_{\Sigma} \left(L H + \frac{1}{2}H^3\right) 
		\left( \frac{\partial \varphi}{\partial t}\bigg|_{t=0} + H \varphi^2 \right) d \s
		\\
 		& = 2 \int_\S \var \tilde{L} \var \, d \s 
 		+ \int_{\Sigma} \left(L H + \frac{1}{2}H^3\right) 
 		\left( \frac{\partial \varphi}{\partial t}\bigg|_{t=0} + H \varphi^2 \right) d \s
	\end{aligned}
\]
where the fourth-order operator $\tilde{L}$ is defined by 
	\[ 
		\begin{aligned}
			\tilde{L} \var & = L L \var + \frac{1}{2} H^2 L \var 
			+ 2 H g (\mathring{A}, \nabla^2 \var) + 2 H \varpi(\n \var ) 
			+ 2 \mathring{A}(\n \var, \n H) \\
			& \quad +  \var  \left( |\n H|^2_{g} + 2 \varpi (\n H) + H \D H 
			+ 2 g(\n^2 H, \mathring{A}) + 2 H^2 |\mathring{A}|^2_{g} 
			+ 2 H g(\mathring{A}, T) - H (\n_n \Ric)(n, n) \right). 
		\end{aligned}
	\]
\end{pro}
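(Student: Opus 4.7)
My plan is to derive both variation formulas by direct computation for a one-parameter family of immersions $F(t,\cdot):\Sigma\to M$ with $F(0,\cdot)=i$ and normal speed $\varphi = g(n,\partial_t F)$ at $t=0$. First I would collect the standard kinematic identities for a normal deformation: $\partial_t(d\sigma)|_{t=0}=-H\varphi\,d\sigma$ and $\partial_t H|_{t=0}$ expressed as a second-order differential operator in $\varphi$ plus curvature-zeroth-order terms (the formula that in this paper's sign convention gives $L\varphi = -\Delta\varphi - \varphi(|A|^2+\Ric(n,n))$). Combining them with the chain rule applied to $\int H^2 d\sigma$ and a single integration by parts (using self-adjointness of $\Delta$), the first-variation identity $W'[\varphi]=\int_\Sigma (LH+\tfrac{1}{2}H^3)\varphi\,d\sigma$ drops out.

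For the second variation I would differentiate the first-variation integrand $\bigl(LH+\tfrac{1}{2}H^3\bigr)\varphi\,d\sigma$ once more in $t$ and evaluate at $t=0$. This produces five groups of contributions: (a) $\partial_t(LH)$, whose principal part yields $L(L\varphi)$ after substituting $\partial_t H = -L\varphi$; (b) terms from $\partial_t(|A|^2)$ acting on $H$, to be rewritten in terms of $\langle \nabla^2 H, \mathring{A}\rangle$ and $H|\mathring A|^2$ via the Codazzi equation together with Ricci identities for commuting $\nabla^2$ past the Riemann tensor; (c) $\partial_t(\Ric(n,n))$, which splits into a translational part $(\nabla_n\Ric)(n,n)\varphi$ and a rotational part coming from $\partial_t n = -\nabla\varphi$ (tangential), producing the one-form $\varpi$ and the $(2,0)$-tensor $T_{ij}=\Riem(\partial_i,n,n,\partial_j)$; (d) $\partial_t(\tfrac{1}{2}H^3)\varphi = \tfrac{3}{2}H^2(\partial_t H)\varphi$, combined with $\partial_t(d\sigma)$ to generate the $\tfrac{1}{2}H^2 L\varphi$ and $\tfrac{1}{2}H^2|A|^2$ corrections; and (e) the non-normal piece $\bigl(\partial_t\varphi|_{t=0}+H\varphi^2\bigr)$, which arises because the deformation is not assumed to be purely normal to second order and which multiplies the first-variation integrand $LH+\tfrac{1}{2}H^3$, accounting for the last line in the statement.

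After gathering (a)--(d), I would integrate by parts twice to move all the derivatives off $\varphi$ wherever possible: this rewrites the quadratic principal part as $2\int (L\varphi)^2 + \tfrac{1}{2}H^2|\nabla\varphi|^2 - 2\mathring{A}(\nabla\varphi,\nabla\varphi)\,d\sigma$ (the cross term $\langle\nabla^2\varphi,\mathring{A}\rangle$ produces, via one integration by parts and Codazzi, the $\mathring{A}(\nabla\varphi,\nabla\varphi)$ piece together with $\mathring{A}(\nabla\varphi,\nabla H)$), and packages the zeroth- and first-order remainder as the coefficient of $\varphi^2$ in the stated expression. The equivalent compact form with $\tilde L$ then follows from one further integration by parts, rewriting $2\int (L\varphi)^2\,d\sigma$ as $2\int \varphi L^2\varphi\,d\sigma$ and recollecting all $\varphi^2$ coefficients as a self-adjoint operator acting on $\varphi$.

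The main obstacle will be bookkeeping in step (b)--(c): extracting the tensors $\varpi$ and $T$ cleanly requires careful use of the Ricci and Codazzi identities in the ambient curved geometry (flat-ambient shortcuts are unavailable), and one must verify that all cross-derivative terms $\langle\nabla^2 H, \mathring A\rangle$, $\mathring A(\nabla\varphi,\nabla H)$ and $H\varpi(\nabla\varphi)$ assemble correctly. Once the organization is fixed, self-adjointness of $L$ on $(\Sigma,\bar g)$ provides the final consolidation into $\int \varphi\,\tilde L\varphi\,d\sigma$, matching the formula quoted from Section~3 of~\cite{LMS}.
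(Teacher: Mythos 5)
Your overall strategy (differentiate $t\mapsto W(F(t,\cdot))$ once and then again, using the evolution laws for $d\sigma$, $H$, $A$, $n$ and $\Ric(n,n)$, then integrate by parts and collect the $\varphi^2$, $|\nabla\varphi|^2$ and $(L\varphi)^2$ blocks) is the right one; note that the paper itself does not prove this proposition but quotes it from Section 3 of \cite{LMS}, and what you propose is essentially that standard computation. However, as written your plan contains concrete slips that would prevent it from reproducing the stated formulas. With this paper's conventions ($A(X,Y)=g(\nabla_X n,Y)$, $H=\bar g^{ij}A_{ij}$, $n$ the outward normal, $\varphi=g(n,\partial_t F)$) the kinematic identities are $\partial_t(d\sigma)|_{t=0}=+H\varphi\,d\sigma$ and $\partial_t H|_{t=0}=+L\varphi$ (check on an outward-expanding round sphere, where $H=2/r$ and $\partial_tH=-2/r^2=L\varphi$); you state $\partial_t(d\sigma)=-H\varphi\,d\sigma$ and later substitute $\partial_t H=-L\varphi$, and moreover your first paragraph (where the formula is said to ``give $L\varphi$'') contradicts your step (a). Carried through literally, your signs yield $-\int_\Sigma(2LH+H^3)\varphi\,d\sigma$ for the first variation. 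Relatedly, the direct computation for $W=\int_\Sigma H^2\,d\sigma$ produces $\int_\Sigma(2LH+H^3)\varphi\,d\sigma=2\int_\Sigma\bigl(LH+\tfrac12 H^3\bigr)\varphi\,d\sigma$, so the claim that the displayed first-variation identity simply ``drops out'' hides a factor of $2$ coming from the normalization of the functional in \cite{LMS}; this same factor is what sits in front of $2\int_\Sigma\varphi\,\tilde L\varphi\,d\sigma$ in the second variation, and your write-up never reconciles the two.

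The bookkeeping assignments in (b)--(c) and (e) also need correction, and they concern exactly the curved-ambient terms that are the delicate part. Differentiating $\Ric(n,n)$ along the flow gives only $\varphi\,(\nabla_n\Ric)(n,n)$ plus, via $\partial_t n=-\nabla\varphi$, the $\varpi(\nabla\varphi)$ contributions; the tensor $T_{ij}=\Riem(\partial_i,n,n,\partial_j)$ cannot arise there. It enters through the evolution of the second fundamental form under a normal variation (the Riccati-type identity in which $\Riem(\partial_i,n,n,\partial_j)$ appears), and, together with $\partial_t\bar g_{ij}=2\varphi A_{ij}$ acting on $\Delta_{\bar g}H$, this is what generates $2Hg(\mathring A,T)$, $g(\nabla^2H,\mathring A)$ and $H\Delta H$. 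Similarly, in (e) the $H\varphi^2$ part of the last integral is not a non-normality effect: it is just $\partial_t(d\sigma)$ hitting the first-variation density, and only $\partial_t\varphi|_{t=0}$ records the second-order (acceleration) structure of the variation. None of this invalidates the approach, but as stated the plan would produce wrong signs and misplaced curvature terms, so these points must be fixed before the computation can match the proposition.
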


\begin{rem}\label{rem:sec-der-will}
Let us identify $W'(i (\Sigma))$ with the function $LH + H^3/2$ on $\Sigma$. 
Then the derivative of this function with respect to the above perturbation 
is given by $\tilde{L} \varphi$. 	
\end{rem}

\

\noindent In particular, when $\Sigma$ is embedded in the Euclidean space $\R^3$ we obtain the following result, 
see also \cite{WEINER}. 

\begin{cor}\label{c:1-2-flat}
 $$
   W'_0(i(\Sigma))[\var] = \int_{\S } \left( L_0 H + \frac{1}{2} H^3 \right) \var  \, d \s;   \qquad \quad L_0 \psi  = 
   - \D  \psi - \psi  |A|^2, 
 $$
 and 
	\[
		\begin{aligned}
			W''_0(i(\Sigma))[\var,\var] 
			& = 
			2 \int_{\S } \left[ (L_0 \var)^2 + \frac{1}{2} H^2 |\n \var |^2 
			- 2 \mathring{A}(\n \var , \n \var ) \right] d \s 
			\\ 
			& \quad + 2 \int_{\S} \var^2  \left( |\n  H|^2 + H \D H 
			+ 2 \la \n^2 H, \mathring{A} \ra + 2 H^2 |\mathring{A}|^2 
			- \frac{1}{2} H^2 |A|^2  \right) d \s  
			\\
			& \quad 
			+ \int_{\Sigma} \left( L_0 H + \frac{1}{2} H^3 \right) 
			\left( \frac{\partial \varphi}{\partial t} \bigg|_{t=0} + H \varphi^2 \right) d \s
			\\
			& = 2 \int_\S \var \tilde{L}_{0} \var \, d \s 
			+ \int_{\Sigma} \left( L_0 H + \frac{1}{2} H^3 \right) 
			\left( \frac{\partial \varphi}{\partial t} \bigg|_{t=0} + H \varphi^2 \right) d \s, 
		\end{aligned}
	\]
where the fourth-order operator $\tilde{L}_{0}$ is defined by 
	\[ 
		\begin{aligned}
			\tilde{L}_{0} \var & = (L_0)^2 \var + \frac{1}{2} H^2 L_0 \var 
			+ 2 H \la\mathring{A}, \nabla^2 \var\ra 
			+ 2 \mathring{A}(\n \var, \n H) 
			\\ 
			& \quad + \var  \left( |\n H|^2  + H \D H + 2 \la \n^2 H, \mathring{A}\ra
			 + 2 H^2 |\mathring{A}|^2  \right). 
		\end{aligned}
	\]
\end{cor}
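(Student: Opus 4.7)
The plan is to deduce Corollary \ref{c:1-2-flat} as a direct specialization of Proposition \ref{p:1-2-var} to the case $M = \R^3$ with the Euclidean metric. Since this setting is flat, every curvature quantity of the ambient space vanishes identically. Concretely, I would first note that $\Riem \equiv 0$, $\Ric \equiv 0$ and $\Sc \equiv 0$ in $\R^3$, and then track how each term in the general formula simplifies.

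The first-variation identity is immediate: in the general formula $L\var = -\Delta \var - \var(|A|^2 + \Ric(n,n))$, the ambient Ricci contribution drops out, so $L$ collapses to $L_0 \psi = -\Delta \psi - \psi |A|^2$, which gives the claimed expression for $W_0'(i(\Sigma))[\var]$. For the second variation, I would plug in the vanishing of the curvature quantities into each term appearing in Proposition \ref{p:1-2-var}:
\begin{itemize}
\item the one-form $\varpi = \Ric(n,\cdot)^t$ is identically zero, so every term containing $\varpi$ drops;
\item the $(2,0)$ tensor $T_{ij} = \Ric_{ij} + G(n,n) \bar g_{ij}$ vanishes (since $\Ric = 0$ forces $G = 0$), eliminating the $2Hg(\mathring{A}, T)$ term;
\item $(\n_n \Ric)(n,n) = 0$ in the flat case, killing the corresponding contribution;
\item $\Ric(n,n) = 0$ removes the $-\tfrac{1}{2}H^2 \Ric(n,n)$ summand and turns $L$ into $L_0$ in the $(L\var)^2$ term.
\end{itemize}
Collecting what survives yields exactly the stated expression for $W_0''(i(\Sigma))[\var,\var]$ and the formula for $\tilde L_0$ as the specialization of $\tilde L$.

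The only residual point to verify is that the boundary/endpoint term
\[
\int_{\Sigma} \left( L_0 H + \tfrac{1}{2} H^3\right) \Bigl( \tfrac{\partial \varphi}{\partial t}\big|_{t=0} + H \varphi^2 \Bigr) \, d\sigma
\]
is inherited unchanged from the general formula; this is clear because that term comes from differentiating $W'(i(\Sigma))[\varphi]$ along the deformation and is independent of the ambient curvature except through $L$ itself, which has already been replaced by $L_0$.

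There is no genuine obstacle here: the entire content of the corollary is the bookkeeping of setting the curvature tensors to zero in Proposition \ref{p:1-2-var}. If one wanted a self-contained derivation independent of the general Riemannian formula, the alternative would be a direct computation using the flat first-variation identity $\tfrac{d}{dt} d\sigma = -H\varphi \, d\sigma$ and the well-known evolution $\tfrac{d}{dt} H = \Delta \varphi + |A|^2 \varphi$, together with $\tfrac{d}{dt} n = -\nabla \varphi$; but since Proposition \ref{p:1-2-var} has already been recorded, the specialization argument above is the most economical route.
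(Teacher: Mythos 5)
Your proposal is correct and takes essentially the same route as the paper, which states the corollary as an immediate specialization (``In particular'') of Proposition \ref{p:1-2-var} to the Euclidean setting, i.e.\ exactly your bookkeeping of setting $\Riem$, $\Ric$, $\Sc$ (and hence $\varpi$, $T$ and $(\n_n \Ric)(n,n)$) to zero so that $L$ collapses to $L_0$ and $\tilde L$ to $\tilde L_0$, with the last integral term carried over unchanged. Only a cosmetic caveat: in your closing aside the sign convention $\frac{d}{dt}d\sigma = -H\varphi\, d\sigma$ is opposite to the one used in this paper (compare \eqref{eq:bdd-below}), but that aside plays no role in the actual argument.
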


\

\noindent The set  $\{P+\lambda R \T_\omega \; : \; P \in \R^3, \lambda>0, R \in SO(3), \omega \in \DD \}$ made by translations, rotations and dilations of M\"obius transformations of the Clifford torus $\T$, form an eight dimensional family of surfaces.
%If we include rotations, the M\"obius deformations of $\T$, namely 
%dilations and translations of the set 
%$\{ \lambda R \T_{\omega} | \lambda >0, \ R \in SO(3), \ \omega \in \DD\}$, 
%form an  eight-dimensional family of surfaces. 
On $\T_\o$, we define  Jacobi vector fields 
$Z_{0,\o}, Z_{1,\o}, \dots, Z_{7,\o}$ according to the following notation 
\[
Z_{0,\o} \hbox{ is generated by dilations }; \qquad \qquad 
Z_{1,\o}, Z_{2,\o}, Z_{3,\o} \hbox{ are generated by translations };
\]
\[
Z_{4,\o}, Z_{5,\o}, Z_{6,\o} \hbox{ are generated by rotations };
\]
\[
Z_{7,\o} \hbox{ is generated by 
  the M\"obius inversions described in Subsection \ref{ss:invfixarea}}. 
\]
Notice that $Z_{1,\o}, \dots, Z_{7,\o}$ all induce deformations which preserve the area. 
We write  $Z_{i,R,\o}$ for the Jacobi vector fields on 
$R \T_{\omega}$.

		Since $R \T_\o $ is diffeomorphic to $\T$ 
by the conformal map $R T_\o $, 
we may pull back a neighbourhood of $R\T_\o $ in $\R^3$ 
onto that of $\T$. 
We write $g_{0,R,\o} := (RT_\o)^{\ast} g_0$ 
for the pull back of the Euclidean metric $g_0$.  
Thus $(\T,g_{0,R,\o})$ is isometric to $(R\T_\o, g_0 )$. 
We also use $\bar{g}_{0,R,\o}$, $n_{0,R,\o}(p)$ and 
$\la \cdot , \cdot \ra_{L^2_{0,R,\o}}$ 
for the tangential metric of $g_{0,R,\o}$ on $\T$, 
the unit outer normal of $\T$ in $g_{0,R,\o}$ and 
the $L^2$-inner product with metric $\bar{g}_{0,R,\o }$. 
Moreover, we write $L^2_{0,R,\o}(\T)$ for 
$(L^2(\T), \la \cdot , \cdot \ra_{L^2_{0,R,\o}} )$. 
Remark that we may regard $Z_{i,R,\o }$ as functions on 
$\T$.

		Next, we set 
\[
	\mathcal{K}_{0,R,\omega} := 
	{\rm span}\, \{ H_{0,R,\omega}, \ Z_{1,R,\omega}, \ldots, Z_{7,R,\omega} 
	\} \subset L^2_{0,R,\o}(\T)
\]
where $H_{0,R,\omega}$ stands for the mean curvature of $\T$ in $g_{0,R,\o}$. 
Let us also denote by 
$\Pi_{0,R,\o} : L^2_{0,R,\o}(\T)\to (\mathcal{K}_{0,R,\o})^{\perp \bar{g}_{0,R,\o}}$ 
the $L^2$-projection onto $(\mathcal{K}_{0,R,\o})^{\perp \bar{g}_{0,R,\o}}$ 
where $\perp \bar{g}_{0,R,\o}$ stands for the orthogonality in $L^2_{0,R,\o}(\T)$: 
	\[
		\psi \in \mathcal{K}^{\perp \bar{g}_{0,R,\o}}_{0,R,\omega} 
		\quad \Leftrightarrow \quad 
		\left\la H_{0,R,\o}, \psi \right\ra_{L^2_{0,R,\o}} 
		= 0 =
		\left\la Z_{i,R,\omega}, \psi \right\ra_{L^2_{0,R,\o}}  
		\ {\rm (} 1 \leq i \leq 7 {\rm )}.
	\]

	 For regular functions $\varphi: \T \to \R$, 
we consider the following perturbation of $\T$: 
	\[
		(\T [\varphi])_{R,\o} := 
		\left\{ p + \varphi(p) n_{0,R,\o}(p) \; : \; p\in \T \right\}, 
		\qquad 
		R \T_\o [\varphi] := \left\{ R T_\o\left( p + \varphi(p) n_{0,R,\o} (p)\right)  
		\; : \; p \in \T  \right\}
	\]
Notice that the surface $((\T[\varphi])_{R,\o}, g_{0,R,\o})$ is isometric 
to $(R\T_{\o} [\varphi] , g_0)$. 
Concerning these perturbations, 
we have the following non-degeneracy property 
for the Willmore energy of the Clifford torus 
and its M\"obius equivalents, see Theorem 4.2 and Corollaries 1, 2 
in Weiner \cite{WEINER}. 

%
%\red{
%		\begin{rem}\label{r:normal-variation}
%For later use, we give one remark for the perturbation for $R\T_\o $. 
%In section \ref{Sec:FDR}, we shall consider the perturbation 
%	\[
%		R\T_\o[\varphi] := \{ R \T_\o (p + \varphi(p) n_0(p) ) \; : \; p \in \T \}
%	\]
%for a regular function $\varphi : \T\to \R$ where 
%$n_0(p)$ stands for the outer unit normal to $\T$ in the Euclidean metric $g_0$. 
%Define the position vector of $\R\T_\o [\varphi]$ by 
%$\mathcal{Z}_0[R,\o ,\varphi](p) = R \T_\o ( p + \varphi(p) n_0(p))$. 
%Since $R T_\o (x)$ is conformal, there exists a positive smooth function 
%$\beta_0(R,\o)$ such that 
%	\[
%		(D_x R T_\o)(x) = \beta_0(R,\o ,x) \widehat{R}_0(R,\o ,x)
%	\]
%where $\widehat{R}_0(R,\o ,x) \in O(3)$. 
%Hence, the differential of $\mathcal{Z}_0[R,\o ,\varphi]$ in $\varphi$ at 
%$\varphi = 0$ is given by 
%	\[
%		(D_\varphi \mathcal{Z}_0[R,\o ,0]) [\psi] (p) 
%		= \frac{d}{d t} \mathcal{Z}_0[R,\omega, t \psi] (p) \big|_{t=0} 
%		= \beta_0(R,\o ,p) \widehat{R}_0(R,\o ,p) \psi(p) n_0(p).
%	\]
%Noting that $ \widehat{R}_0(R,\o ,p) n_0(p)$ is the unit outer normal to 
%$R \T_\o $, we observe that the normal component of 
%$(D_\varphi \mathcal{Z}_0[R,\o ,0]) [\psi](p)$ is 
%	\[
%		\left\la (D_\varphi \mathcal{Z}_0[R,\o ,0]) [\psi] (p) , 
%		\widehat{R}_0(R,\o ,p) n_0(p) \right\ra 
%		= \beta_0(R,\o ,p) \psi(p). 
%	\]
%Later we will use this fact. 
%		\end{rem}
%}

	\begin{pro}[\cite{WEINER}]\label{p:nondeg}
		Let $\T_{\omega}$ be the immersion of $\T$ in $\R^{3}$ as in 
		Proposition \ref{p:disk} and $\a \in (0,1)$: 
		then for all $R \in SO(3)$ and $\omega \in \DD$, 
		one has $W_{0}'(R \T_{\omega} ) = 0$. Moreover, $R \T_{\omega}$ 
		is non-degenerate in the following sense. 
		For any compact set $K \subset \DD \backslash \{0\}$ 
		and $\ell \in \N$, 
		there exist  constants $C_{K,\ell,1}>0$ and $C_{K,\ell,2} > 0$  
		such that for every $R \in SO(3)$ and $\omega \in K$, 
		one has the lower bound
			\begin{equation}\label{eq:CK}
				\left\| \Pi_{0,R,\o} \tilde{L}_{0,R,\o} \varphi 
				\right\|_{C^{\ell,\a} ( \T )  } 
				\geq C_{K,\ell,1} \left\| \tilde{L}_{0,R,\o} \varphi 
				\right\|_{C^{\ell,\a} ( \T  )  } 
				\geq C_{K,\ell,2} \left\|  \varphi \right\|_{C^{4+\ell,\a} (\T )  }
			\end{equation}
		for all $\varphi \in C^{4+\ell,\a}(\T) \cap 
		\mathcal{K}_{0,R,\o}^{\perp \bar{g}_{0,R,\o}}$ 
		where $\tilde{L}_{0,R,\o}$ is the operator $\tilde{L}_0$ for $R \T_\o $. 
\end{pro}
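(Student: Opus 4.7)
The argument splits naturally into three parts: (a) verifying the critical-point condition $W_0'(R\T_\o)=0$; (b) proving the quantitative invertibility \eqref{eq:CK} at the base point $(R,\o)=(I,0)$; and (c) transferring the estimate to arbitrary $(R,\o)\in SO(3)\times K$ with uniform constants.

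\textbf{Step (a): critical points.} The Clifford torus $\T$ is a classical Willmore surface in $\R^3$, i.e.\ $L_0 H + \tfrac{1}{2}H^3=0$, as can be verified directly from the parametrization \eqref{eq:def-X} and Corollary \ref{c:1-2-flat}. Each $R T_\o$ is the composition of a rigid motion with a M\"obius transformation (the identity if $\o=0$, an inversion otherwise, by Proposition \ref{p:disk}); combining the M\"obius invariance in Proposition \ref{p:Mobinv}$jj)$ with isometric invariance, normal perturbations of $RT_\o(\T)$ are in energy-preserving correspondence with perturbations of $\T$, so $R\T_\o$ inherits the Willmore property $W_0'(R\T_\o)=0$.

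\textbf{Step (b): non-degeneracy on $\T$.} Since the induced metric on $\T$ from \eqref{eq:def-X} has an $SO(2)$-symmetry rotating $\theta$ and the fourth-order operator $\tilde L_0$ of Corollary \ref{c:1-2-flat} is $SO(2)$-invariant, $\tilde L_0$ diagonalizes under Fourier expansion in $\theta$ into a countable family of ODEs in $\varphi$, whose spectra can be enumerated explicitly; this is precisely the computation carried out by Weiner \cite{WEINER}, who showed that $\ker \tilde L_0$ is exactly the $8$-dimensional space of conformal Jacobi fields $\mathrm{span}\{Z_{0,0},Z_{1,0},\dots,Z_{7,0}\}$. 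Although $\mathcal{K}_{0,I,0}$ contains $H_{0,I,0}$ rather than the dilation generator $Z_{0,0}$, one has $\la Z_{0,0}, H_{0,I,0}\ra_{L^2}=2\,|\T|_{g_0}\neq 0$ (this being the derivative of area under dilations), so $\mathcal{K}_{0,I,0}$ is a genuine complement of $\ker\tilde L_0$ in $L^2(\T)$. The Fredholm alternative on the compact surface $\T$ then gives $L^2$-invertibility of $\tilde L_0$ on $\mathcal{K}_{0,I,0}^{\perp\bar g_{0,I,0}}$, and standard Schauder estimates upgrade this to the $C^{\ell,\a}\to C^{4+\ell,\a}$ bound in the second inequality of \eqref{eq:CK}; the first inequality then follows from $C^{\ell,\a}$-boundedness of the projection $\Pi_{0,I,0}$.

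\textbf{Step (c): transfer, uniformity, and main obstacle.} For general $(R,\o)\in SO(3)\times K$ I would pull the problem back to $\T$ through $RT_\o:\T\to R\T_\o$. Conformal invariance of $W_0$ (Proposition \ref{p:Mobinv}$jj)$) ensures that $\tilde L_{0,R,\o}$ transforms covariantly under this conformal change of metric, so the $8$-dimensional kernel and its complement $\mathcal{K}_{0,R,\o}$ are mapped onto those at $(I,0)$ modulo a controlled conformal factor. The coefficients of $\tilde L_{0,R,\o}$ and of the pull-back metric $g_{0,R,\o}$ depend smoothly on $(R,\o)$, and since $SO(3)\times K$ is compact with $K\subset\DD\setminus\{0\}$ bounded away from $\partial\DD$ (the locus where $T_\o$ degenerates, cf.\ Proposition \ref{p:disk}), the Schauder constants and the spectral gap can be taken uniform in $(R,\o)$, producing the uniform constants $C_{K,\ell,1}, C_{K,\ell,2}>0$. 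The genuinely delicate point is instead the explicit spectral analysis of Step (b): enumerating the full spectrum of $\tilde L_0$ on $\T$ and excluding accidental zero modes beyond the expected $8$-dimensional conformal Jacobi field space. This is the content of Weiner's theorem, which I would invoke directly; the remaining ingredients (conformal covariance, Fredholm theory, compactness in $(R,\o)$) are by comparison standard.
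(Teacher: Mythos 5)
Your route (explicit kernel from Weiner, Fredholm plus Schauder at the base point, then uniformity over the compact parameter set $SO(3)\times K$ by smooth dependence of the coefficients) differs from the paper, which proves both inequalities in \eqref{eq:CK} by contradiction--compactness arguments in the parameters $(R_m,\o_m,\varphi_m)$; your scheme is viable and Steps (a), (b)-second-inequality, and (c) are essentially sound, if somewhat loosely quantified on the uniformity of constants.

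However, your justification of the \emph{first} inequality in \eqref{eq:CK} is a genuine gap: $C^{\ell,\a}$-boundedness of $\Pi_{0,R,\o}$ gives $\|\Pi_{0,R,\o}\tilde L_{0,R,\o}\varphi\|\leq C\|\tilde L_{0,R,\o}\varphi\|$, i.e.\ the \emph{reverse} direction, and says nothing about a lower bound. A priori the projection could kill most of $\tilde L_{0,R,\o}\varphi$, since $\mathcal{K}_{0,R,\o}$ contains $H_{0,R,\o}$, which does \emph{not} lie in $\ker\tilde L_{0,R,\o}$. The actual reason the inequality holds is the combination of two facts: (i) by self-adjointness of $\tilde L_{0,R,\o}$ and the kernel characterization, $\tilde L_{0,R,\o}\varphi$ is $L^2$-orthogonal to \emph{all} of $Z_{0,R,\o},\dots,Z_{7,R,\o}$, so after Gram--Schmidt the projection subtracts only the component along $Y_{0}\propto H_{0,R,\o}$, i.e.\ $\Pi_{0,R,\o}\tilde L_{0,R,\o}\varphi=\tilde L_{0,R,\o}\varphi-\la\tilde L_{0,R,\o}\varphi,Y_0\ra Y_0$; and (ii) pairing this identity with $Z_{0,R,\o}$ and using $\la\tilde L_{0,R,\o}\varphi,Z_{0,R,\o}\ra=0$ together with $\la H_{0,R,\o},Z_{0,R,\o}\ra=2|\T|_{g_0}=8\sqrt2\pi^2>0$ (uniformly over the compact parameter set) shows $|\la\tilde L_{0,R,\o}\varphi,Y_0\ra|\leq C\|\Pi_{0,R,\o}\tilde L_{0,R,\o}\varphi\|$, whence the lower bound. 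You do record the nonvanishing of $\la Z_{0},H\ra$, but you use it only to argue that $\mathcal{K}_{0,I,0}$ complements the kernel; the step where it (together with the orthogonality of $\tilde L\varphi$ to the kernel) forces the $H$-component of $\tilde L\varphi$ to be controlled is missing, and without it the first inequality of \eqref{eq:CK} is unproved. This is exactly the content of the second contradiction argument in the paper's proof.
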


\begin{proof}
We first prove that 
	\begin{equation}\label{eq:esti-1}
		\left\| \tilde{L}_{0,R,\o} \varphi \right\|_{C^{\ell,\a} (\T)  } 
		\geq C_{K,\ell} \left\|  \varphi \right\|_{C^{4+\ell,\a} (\T)  }. 
	\end{equation}
We prove this by contradiction. Assume that 
there are $(R_m) \subset SO(3)$, $(\o_m) \subset K$ and 
$(\varphi_m) \subset C^{4+\ell,\a}(\T)$ such that 
	\begin{equation}\label{eq:as}
		\begin{aligned}
			&\| \varphi_m \|_{C^{4+\ell,\a}(\T) } = 1, \quad 
			\left\| \tilde{L}_{0,R_m,\o_m} \varphi_m \right\|_{C^{\ell,\a} (\T)  } 
			\to 0, 
			\\
			& \la H_{0,R_{m},\o_{m}}, \varphi_{m} \ra_{L^2_{0,R_m,\o_m}}
			= 0 = \la Z_{i,R_{m}\o_{m}} ,\varphi_{m} \ra_{L^2_{0,R_m,\o_m}}
			\quad (1 \leq i \leq 7).
		\end{aligned}
	\end{equation}
We may also suppose $R_m \to R_0$ and $\o_m \to \o_0$. 
Then since the embedding $C^{4+\ell,\a} (\T) \subset C^{4+\ell,\beta}(\T)$ 
is compact, by \eqref{eq:as}, we also suppose that 
$\varphi_m \to \varphi_0$ strongly in $C^{4+\ell,\beta}(\T)$.

		Next we prove $\varphi_0 \neq 0$. 
If $\varphi_0 \equiv 0$, then from the definition of $\tilde{L}_{0,R,\o}$, isolating the highest-order terms, 
we have 
	\[
		(L_{0,R_m,\o_m})^2 \varphi_m = f_m 
	\]
where $f_m$ has a strongly convergent subsequence in 
$C^{\ell,\a}(\T)$. 
Thus by the elliptic regularity theory, we observe that 
$\| \varphi_m \|_{C^{4+\ell,\a} (\T) } \to 0$, which contradicts  \eqref{eq:as}. 
Hence $\varphi_0 \not\equiv 0$.

		Next, by \eqref{eq:as} and the fact $\varphi_m \to \varphi_0$ 
strongly in $C^{4+\ell,\beta}(\T)$, we notice that 
$\tilde{L}_{0,R_0,\o_0} \varphi_0 = 0$ 
and $\varphi_0 \in C^\infty(\T)$ thanks to the elliptic regularity. 
By the result of Weiner \cite{WEINER}, we have 
	\[
		{\rm Ker}\, \tilde{L}_{0,R_0,\o_0} = {\rm span}\, 
		\{ Z_{0,R_{0},\o_{0}}, \ldots, Z_{7,R_{0},\o_{0}} \}. 
	\]
We apply a Graham-Schmidt orthogonalization 
to $Z_{1,R_{0},\o_{0}}, \ldots, Z_{7,R_0,\o_0}$ in $L^2_{0,R_0,\o_0}(\T)$ to 
get new variation fields ${Y}_{1,R_0,\o_0}, \ldots, {Y}_{7,R_0,\o_0}$. 
Then $\varphi_0$ can be expressed as 
	\[
		\varphi_0 = a_0 Z_{0, R_0, \o_0} + 
		\sum_{i=1}^7 a_i {Y}_{i,R_0,\o_0}.
	\]
Since $Z_{i,R_0,\o_0}$ ($1 \leq i \leq 7$) are Jacobi vector fields 
corresponding to the area preserving deformation, it follows that 
	\[
		\la Z_{i,R_0,\omega_0} , H_{0,R_0,\omega_0} \ra_{L^2_{0,R_0,\o_0}} = 0 
		\quad (1 \leq i \leq 7). 
	\]
By \eqref{eq:as} and the definitions of $\{Y_{i,R_0,\omega_0}\}_{1 \leq i \leq 7}$, 
one observes that 
	\begin{equation}\label{eq:27}
		0 = \la \varphi_0, H_{0,R_0,\o_0} \ra_{L^2_{0,R_0,\o_0}} = 
		a_0 \la Z_{0,R_0, \o_0 }, H_{0,R_0,\o_0} \ra_{L^2_{0,R_0,\o_0}}. 
	\end{equation}
Noting that $Z_{0,R_0,\o_0}$ is generated by dilations, 
we obtain 
	\begin{equation}\label{eq:bdd-below}
		0 < 8 \sqrt{2} \pi^{2} 
		= \frac{d}{dt} ( 1 + t)^{2} | R_{0} \T_{\o_{0}} |_{g_{0}} \Big|_{t=0}
		= \frac{d}{dt} | ( 1 + t)^{2} R_{0} \T_{\o_{0}} |_{g_{0}} \Big|_{t=0} 
		= \la Z_{0,R_{0},\o_{0}} ,H_{0,R_{0},\o_{0}} \ra_{L^2_{0,R_0,\o_0}}.
	\end{equation}
Hence, from \eqref{eq:27}, we deduce that $a_0=0$ and 
$\varphi_0 = \sum_{i=1}^7 a_i Y_{i,R_0,\omega_0}$. 
Due to \eqref{eq:as} and the definitions of $\{Y_{i,R_0,\omega_0}\}_{1 \leq i \leq 7}$, 
we have $\la \varphi_0, Y_{i,R_0,\omega_0} \ra_{L^2_{0,R_0,\o_0}} = 0$ 
($1\leq i \leq 7$), 
which yields $a_i = 0$ ($1 \leq i \leq 7$). 
However, this contradicts $\varphi_0 \not\equiv 0$. 
Thus \eqref{eq:esti-1} holds.

		Next we show that 
	\begin{equation}\label{eq:esti-2}
		\left\| \Pi_{0,R,\o} \tilde{L}_{0,R,\o} \varphi \right\|_{C^{\ell,\a} (\T)  } 
		\geq C_{K,\ell} \left\| \tilde{L}_{0,R,\o} \varphi \right\|_{C^{\ell,\a} (\T)  }. 
	\end{equation}
Again we argue by contradiction and suppose that 
there are $(R_m) \subset SO(3)$, $(\o_m) \subset K$ and 
$(\varphi_m) \subset C^{4+\ell,\a}(\T)$ such that 
	\begin{equation}\label{eq:as-2}
		\begin{aligned}
			&\left\| \Pi_{0,R_m,\o_m} \tilde{L}_{0,R_m,\o_m} 
			\varphi_m \right\|_{C^{\ell,\a} (\T)  } \to 0, \quad 
			\left\| \tilde{L}_{0,R_m,\o_m} \varphi_m \right\|_{C^{\ell,\a} (\T)  } 
			= 1,\\
			& \la H_{0,R_{m},\o_{m}}, \varphi_{m} \ra_{L^2_{0,R_m,\o_m}}
				= 0 = \la Z_{i,R_{m}\o_{m}} ,\varphi_{m} \ra_{L^2_{0,R_m,\o_m}}
				\quad (1 \leq i \leq 7).
		\end{aligned}
	\end{equation}

		By \eqref{eq:esti-1}, we observe that 
$( \| \varphi_m \|_{C^{4+\ell,\a} (\T) })$ is bounded. 
As  above, we use the 
$L^2_{0,R_m,\o_m}(\T)$-orthogonal system 
$\{Y_{i,R_m,\o_m}\}_{1 \leq i \leq 7}$ starting from 
$\{Z_{i,R_m,\o_m}\}_{1 \leq i \leq 7}$. 
Since $\la Y_{i,R_m,\o_m} , H_{0,R_m,\o_m} \ra_{L^2_{0,R_m,\o_m}} = 0$ holds 
due to the area preserving properties, 
set $Y_{0,R_m,\o_m} := H_{0,R_m,\o_m} / \| H_{0,R_m,\o_m} \|_{L^2_{0,R_m,\o_m}}$. 
Then $(Y_{i,R_m,\o_m})_{0 \leq i \leq 7}$ is an 
$L^2_{0,R_m,\o_m}(\T)$-orthogonal system. 
Using $(Y_{i,R_m,\o_m})_{0 \leq i \leq 7}$, 
$\Pi_{0,R_m,\o_m} \tilde{L}_{0,R_m,\o_m} \varphi_m$ is written as 
	\[
		\Pi_{0,R_m,\o_m} \tilde{L}_{0,R_m,\o_m} \varphi_m 
		= \tilde{L}_{0,R_m,\o_m} \varphi_m 
		- \sum_{i=0}^7 
		\la \tilde{L}_{0,R_m,\o_m} \varphi_m , Y_{i,R_m,\o_m} \ra_{L^2_{0,R_m,\o_m}} 
		Y_{i,R_m,\o_m}. 
	\]
Since $Y_{i,R_m,\o_m} \in {\rm Ker}\, \tilde{L}_{0,R_m,\o_m}$ ($1 \leq i \leq 7$), 
we get 
	\[
		\la \tilde{L}_{0,R_m,\o_m} \varphi_m , Y_{i,R_m,\o_m} \ra_{L^2_{0,R_m,\o_m} }
		= \la  \varphi_m , \tilde{L}_{0,R_m,\o_m} Y_{i,R_m,\o_m} \ra_{L^2_{0,R_m,\o_m}} 
		= 0
	\]
for all $ 1 \leq i \leq 7$. Thus 
	\begin{equation}\label{eq:ex-tL}
		\Pi_{0,R_m,\o_m} \tilde{L}_{0,R_m,\o_m} \varphi_m 
		= \tilde{L}_{0,R_m,\o_m} \varphi_m 
		- \la \tilde{L}_{0,R_m,\o_m} \varphi_m , Y_{0,R_m,\o_m} \ra_{L^2_{0,R_m,\o_m} }
			 Y_{0,R_m,\o_m}. 
	\end{equation}
Noting that 
$\la \tilde{L}_{0,R_m,\o_m} \varphi_m , Z_{0,R_m,\o_m} \ra_{L^2_{0,R_m,\o_m} }
= \la \varphi_m , \tilde{L}_{0,R_m,\o_m} Z_{0,R_m,\o_m} \ra_{L^2_{0,R_m,\o_m}}= 0$ 
and taking an $L^2_{0,R_m,\o_m}(\T)$-inner product of 
$\Pi_{0,R_m,\o_m} \tilde{L}_{0,R_m,\o_m} \varphi_m$ and $Z_{0,R_m,\o_m}$,  
we observe that 
	\[
		\begin{aligned}
			&\la \Pi_{0,R_m,\o_m} \tilde{L}_{0,R_m,\o_m} \varphi_m, Z_{0,R_m,\o_m} 
			\ra_{L^2_{0,R_m,\o_m} }
			\\
		= &- \la \tilde{L}_{0,R_m,\o_m} \varphi_m , Y_{0,R_m,\o_m} \ra_{L^2_{0,R_m,\o_m} }
			\la Y_{0,R_m,\o_m},  Z_{0,R_m,\o_m} \ra_{L^2_{0,R_m,\o_m}}.
		\end{aligned}
	\]
Recalling \eqref{eq:bdd-below} and \eqref{eq:as-2}, we infer that 
	\[
		\la \tilde{L}_{0,R_m,\o_m} \varphi_m , Y_{0,R_m,\o_m} \ra_{L^2_{0,R_m,\o_m}} 
		\to 0. 
	\]
Thus from \eqref{eq:ex-tL} we get a contradiction: 
	\[
		1 = \left\| \tilde{L}_{0,R_m,\o_m} \varphi_m \right\|_{C^{\ell,\a} (\T)  } 
		\to 0.
	\]
Hence, \eqref{eq:esti-2} holds and we complete the proof. 
\end{proof}

\begin{rem}\label{r:nondeg}
When $\o = 0$, $\T_\o$ is the symmetric Clifford torus, so the action of one among the 
rotation vector fields is trivial. A global version of Proposition \ref{p:nondeg}, including also the 
case $\o = 0$, can be expressed as the non-degeneracy of the family of inverted 
(and rotated) Clifford tori in the sense of Bott. 
In particular the constants $C_{K,1}$ and $C_{K,2}$ in 
\eqref{eq:CK} remains controlled when $\o$ approaches zero. 
For instance, when $R = {\rm Id}$ and 
$Z_{0,R,6}$ corresponds to the rotation 
along the $z$-axis, writing $\omega = (x,y)$, 
it suffices to replace the definition of 
$\mathcal{K}_{0,{\rm Id},\omega}$ by 
	\[
		\mathcal{K}_{0,{\rm Id},\omega} := 
		{\rm span}\, \left\{ H_{0,{\rm Id},\omega}, \ Z_{1,{\rm Id},\omega}, \ldots, 
		Z_{5,{\rm Id},\omega},  \frac{\partial}{\partial x} T_{\omega}, 
		\frac{\partial}{\partial y} T_{\omega}
		\right\}
	\]
near $\omega = 0$. 
\end{rem}

%%%%%%%%%%%%%%%%%%%%%%%%%%%%%%%%%%%%%%%%%%%%
%%%%%%%%%%%%%%%%%%%%%%%%%%%%%%%%%%%%%%%%%%%%
%%%%%%%%%%%%%%%%%%%%%%%%%%%%%%%%%%%%%%%%%%%%

\section{Finite-dimensional reduction} \label{Sec:FDR}

%%%%%%%%%%%%%%%%%%%%%%%%%%%%%%%%%%%%%%%%%%%%
%%%%%%%%%%%%%%%%%%%%%%%%%%%%%%%%%%%%%%%%%%%%
%%%%%%%%%%%%%%%%%%%%%%%%%%%%%%%%%%%%%%%%%%%%

In this section we reduce the problem to a finite-dimensional one, namely to the choice of the concentration point and 
to the M\"obius action on the scaled Clifford tori. We first introduce a family of approximate solutions to our problem,
and then we modify them properly to solve the equation up to elements in 
the Kernel of the operator $\tilde{L}$. In the next section we will adjust the parameters so that  the Willmore equation is fully solved (under area constraint).

\subsection{Approximate solutions: small tori embedded in $M$}\label{ss:appsol}

We fix a  compact set $K$ of the unit disk $\DD$ and we consider then the family 
\[
   \hat{\mathcal{T}}_{\e, K} 
   = 
   \left\{ \e  \, R \, \T_\o \; : \; R \in SO(3), \o \in K  \right\}. 
\]
We remark that, by construction, elements in $\hat{\mathcal{T}}_{\e, K}$ consist of  Willmore  surfaces in 
$\R^3$ all with area identically equal to $4 \sqrt{2} \pi^2 \e^{2} $.

\begin{rem}\label{r:inv}
Notice that, by rotation invariance of the Clifford torus $\T$, the above family $\hat{\mathcal{T}}_{\e, K}$ 
is four-dimensional and not five-dimensional. 
\end{rem}

\

		\noindent
		Next, we construct a family of surfaces in $M$ 
by  exponential maps of $\hat{\mathcal{T}}_{\e, K}$. 
Fix $P \in M$. Around $P$, we may find 
a local orthonormal frame $\{F_{P,1},F_{P,2},F_{P,3}\}$. 
By this frame, we may identify $T_PM$ with $\R^3$ and 
define the exponential map $\exp_P^g$. 
Since $M$ is compact, there exists a $\rho_{0} > 0$, 
which is independent of $P \in M$, such that 
$\exp_{P}$ is diffeomorphic on $B_{\rho_{0}}(0) \subset \R^{3}$ 
to $\exp_{P}(B_{\rho_{0}}(0))$ for every $P \in M$. 
Then we may select $\e_{0}>0$ so that 
	\[
		\e R \T_{\o} \subset B_{\rho_{0}} (0) \qquad 
		{\rm for\ every}\ (\e,R,\o) \in (0,\e_{0}] \times SO(3) \times \DD.
	\]
Hereafter, we fix $\e_{0}>0$ and consider the case $\e \in (0,\e_{0})$. 
For a compact set $K \subset \DD$, we define 
\[
   \mathcal{T}_{\e, K} 
   = \left\{ \exp_P(\Sigma) \; : \; P \in M, 
   \Sigma \in \hat{\mathcal{T}}_{\e, K}  \right\} 
   \qquad {\rm for} \ \e \in (0,\e_{0}).
\]
%In general, elements of new family $\tilde{\mathcal{T}}_{\e, K}$ 
%will not satisfy the area constraint anymore. However, 
%we soon show that one may re-obtain the family 
%satisfying the area constraint. 
%See Lemma \ref{l:scaling} below. 

	\noindent
	Before proceeding further, we make some remarks. 
Since we are interested in the asymptotic behaviour of the family 
satisfying the small area constraint, 
it is useful to introduce the following metric on $M$: 
	\begin{equation}\label{eq:def-ge}
		g_{\e}(P) := \frac{1}{\e^{2}} g(P). 
	\end{equation}

		\noindent
As above, by \eqref{eq:def-ge}, 
putting $F_{\e ,P ,\alpha} := \e F_{P,\alpha}$, 
$\{F_{\e ,P,1},F_{\e ,P,2}, F_{\e ,P,3}\}$ is a local orthonormal frame 
with metric $g_\e $ 
and using this frame, we may define the exponential map $\exp_{P}^{g_\e}$. 
Let us denote by $g_P$ and $g_{\e ,P}$ the pull back of the metrics 
$g$ and $g_\e $ through $\exp_{P}^{g}$ and $\exp_{P}^{g_\e}$: 
$g_P := (\exp_P^g)^\ast g$ and $g_{\e ,P} := (\exp_P^{g_\e})^\ast g_\e $. 
Then it is easily seen that 
	\begin{enumerate}
		\item[(i)] 
		Write $W_g$ and $W_{g_\e}$ for the Willmore functional 
		on $(M,g)$ and $(M,g_{\e})$. 
		Let $\Sigma \subset M$ be an embedded surface. 
		Denote by $H_g$ and $H_{g_\e}$ the mean curvature of 
		$\Sigma$ with the metric $g$ and $g_\e $, respectively. 
		Then we have 
		\[
		H_{g_\e} = \e H_g, \quad W_{g_\e} (\Sigma) = W_{g} ( \Sigma ), \quad 
		W_{g_\e}'(\Sigma) = \e^3 W_{g}'(\Sigma).   
		\]
		In particular, $\Sigma$ is a Willmore surface with the area constraint in 
		$(M,g)$ if and only if so is in $(M,g_{\e})$. 
		\item[(ii)] 
		The exponential map $\exp_{P}^{g_{\e}}$ 
		is defined in $B_{\e^{-1} \rho_{0}} (0)$ and 
		\[
		\exp_{P}^{g}(\e z) = \exp_{P}^{g_{\e}}(z) \qquad 
		{\rm for\ all}\  |z|_{g_{0}} \leq \e^{-1} \rho_{0}.
		\]
		\item[(iii)] 
		The metric $g_P$ has the following expansion 
		(see, for instance, Lee--Parker \cite{LP}): 
			\begin{equation}\label{eq:ex-g}
			g_{P,\alpha\beta}(x) 
			= \delta_{\alpha \beta} 
			+ \frac{1}{3} R_{\alpha \mu \nu \beta} 
			x^{\mu} x^{\nu} + R(x)  
			=: \delta_{\alpha \beta} + \tilde{h}_{P,\alpha \beta}(x)
			\quad {\rm in}\ \overline{B_{\rho_{0}}(0)} \subset T_{P}M
			\end{equation}
			where $R(x)$ is the remainder term satisfying 
			$|\nabla^k R(x) | = O(|x|^{3-k})$ for $k=0,1,2,3$. 
			Moreover, since $M$ is three dimensional, one sees that 
			\begin{equation}\label{eq:RicciDec}
				\begin{aligned}
					R_{\alpha\mu\nu\beta} 
					&= 
					g_P ( {\rm Riem}(\partial_{\nu}, \partial_{\beta} ) \partial_{\mu},
					 \partial_{\alpha} ) 
					 \\
					&= 
					\frac{{\rm Sc}_{P}}{2}( g_{P,\alpha\beta} g_{P,\mu\nu} 
				- g_{P,\alpha\nu} g_{P,\mu\beta} ) 
				+ g_{P,\alpha \nu} R_{\mu \beta} 
				- g_{P,\alpha \beta} R_{\mu \nu}
				+ g_{P,\mu \beta} R_{\alpha \nu} 
				- g_{P,\mu \nu} R_{\alpha \beta}
			\end{aligned}
		\end{equation}
			where ${\rm Sc}_{P}$ and $R_{\alpha \beta}$ are the scalar curvature 
			and the Ricci tensor evaluated at $P$, respectively. 
		\item[(iv)] 
		For $g_{\e ,P}$, we have 
		\begin{equation}\label{eq:ge=d+eh}
		g_{\e ,P,\alpha \beta} (y) = 
		\d_{\alpha \beta}+ \e^2 h^\e_{P,\alpha \beta}(y) 
		\qquad {\rm for\ any}\ |y|_{g_{0}}\leq \e^{-1} \rho_{0}
		\end{equation}
		where $h^{\e}_{P,\alpha \beta}(y) = \e^{-2} \tilde{h}_{P,\alpha \beta} (\e y)$ 
		and there holds 
		\begin{equation}\label{eq:esti-he}
		|y|^{-2} |h_{P,\alpha \beta}^\e| + |y|^{-1} | \nabla h_{P,\alpha \beta}^\e | + 
		\sum_{i=2}^{k} | \nabla^{i} h_{P,\alpha \beta}^{\e} (y) |_{g_{0}} 
		\leq \tilde{h}_{0,k} 
		\end{equation}
		for any $k \in \N$ and $|y|_{g_0} \leq \e ^{-1} \rho_0$. 
		Here $\tilde{h}_{0,k}$ depends on $k$, but not on $\e$. 
		Moreover, $(P,y) \mapsto g_{\e ,P,\alpha \beta}(y)$ is smooth and 
		for any $k,\ell \in \N$, there exists a $C_{k,\ell} > 0$ such that 
		\begin{equation}\label{eq:met-deri}
		|y|^{-2}| D_P^{k+1} g_{\e ,P,\alpha \beta} (y)| 
		+ 
		|y|^{-1} | D_P^{k+1} \nabla_y g_{\e ,P,\alpha \beta}(y)| 
		+ \sum_{j=2}^{\ell}
		|D_P^{k+1} \nabla_y^{j} g_{\e ,P,\alpha \beta} (y) | 
		\leq C_{k,\ell} \e^2
		\end{equation}
		where $D_P$ denotes the differential by $P$ in the original scale of $M$ 
		(not in the rescaled one). 
		\item[(v)] 
		The family ${\mathcal T}_{\e, K}$ is rewritten as 
		\begin{equation}\label{eq:tke}
		\begin{aligned}
		{\mathcal T}_{\e, K} 
		&= 	
		\left\{ \exp_{P}^{g}(\e R \T_{\o}) \; : \; 
		P \in M, R \in SO(3), \o \in K \right\} 
		\\
		&= 
		\left\{ \exp_{P}^{g_{\e}} ( R \T_{\o} ) \; : \; 
		P \in M, R \in SO(3), \o \in K \right\}.
		\end{aligned}
		\end{equation}
	\end{enumerate}

\begin{rem}\label{r:param}
M\"obius-inverted Clifford tori in $\R^3$  can be clearly completely described by their 
translation vector, their rotation and their distortion factor. It then follows that
for exponential maps in the metric $g_\e$ (see \eqref{eq:def-ge}), except for $\o = 0$, 
the parameters $P, R, \o$ give a local smooth parametrization of ${\mathcal T}_{\e, K}$. 

As for Remark \ref{r:nondeg}, ${\mathcal T}_{\e, K}$ turns out to be globally a smooth 
manifold even near surfaces described by $\o = 0$, where one could use a different  
local parametrization. 
\end{rem}

\

		\noindent
		In what follows, we always consider our problem on $(M,g_{\e})$. 
For example, let $\mathcal{S}_{\e} \subset M$ be a family of 
smooth surfaces with $|\Sigma|_{g} = C_{0} \e^{2}$ 
for each $\Sigma \in \mathcal{S}_{\e}$. 
Using $g_{\e}$, we can work on the rescaled surfaces 
with the fixed area $C_{0}$ in normal coordinate of $g_{\e}$.

\

\noindent
		Next we shall show that 
each element of ${\mathcal{T}}_{\e ,K}$ 
is an approximate solution of $W_{g_\e}$.

\begin{lem}\label{l:appsol}
	Fix a compact set $K \subset \DD$ and $k \in \N$. 
	Then there exists $C_{K,k}>0$ such that 
$$
  \| W_{g_{\e}}'(\Sigma) \|_{C^{k}(\Sigma)} \leq C_{K,k} \e^2 
  \qquad \quad 
  \hbox{ for every } \Sigma \in {\mathcal{T}}_{\e,K} \quad 
  {\rm and} \quad \e \in (0,\e_{0}). 
$$ 
\end{lem}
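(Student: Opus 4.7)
The plan is to reduce the estimate to a perturbation argument around the Euclidean Willmore equation, exploiting the fact that each $R\T_\omega$ is a critical point of $W_0$ and that the metric $g_{\e,P}$ differs from the flat metric by a controlled $O(\e^2)$ perturbation in normal coordinates.

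First I would rewrite $\Sigma = \exp_{P}^{g_\e}(R\T_\omega)$ and use the isometry $\exp_P^{g_\e} : (B_{\e^{-1}\rho_0}(0), g_{\e,P}) \to (\exp_P^{g_\e}(B_{\e^{-1}\rho_0}(0)), g_\e)$ to view $\Sigma$ as the fixed surface $R\T_\omega$ endowed with the ambient metric $g_{\e,P}$. Since $K \subset \DD$ is compact and $SO(3)$ is compact, there exists $R_K>0$ so that $R\T_\omega \subset B_{R_K}(0)$ for every $R\in SO(3)$, $\omega\in K$. On this fixed bounded region, \eqref{eq:ge=d+eh}--\eqref{eq:met-deri} give
\[
g_{\e,P} = \delta + \e^2 h^\e_P, \qquad \|h^\e_P\|_{C^{k+4}(B_{R_K}(0))} \leq C_{K,k},
\]
uniformly in $P\in M$ and $\e\in(0,\e_0)$.

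Next I would apply Proposition \ref{p:1-2-var} to write
\[
W'_{g_\e}(\Sigma) = L_{g_{\e,P}} H_{g_{\e,P}} + \tfrac{1}{2}H_{g_{\e,P}}^{3},
\]
and observe that the right-hand side is a polynomial expression in the metric coefficients $g_{\e,P,\alpha\beta}$, the inverse metric, and their derivatives up to order four on $R\T_\omega$. By the smooth dependence of all such geometric quantities (induced metric $\bar g$, second fundamental form $A$, mean curvature $H$, Laplace--Beltrami operator $\Delta_{\bar g}$, ambient Ricci tensor, unit normal) on the ambient metric and its derivatives, and by the uniform bounds above, we obtain
\[
\left\| \bigl(L_{g_{\e,P}} H_{g_{\e,P}} + \tfrac{1}{2}H_{g_{\e,P}}^3\bigr) - \bigl(L_0 H_0 + \tfrac{1}{2}H_0^3\bigr) \right\|_{C^k(R\T_\omega)} \leq C_{K,k}\,\e^2,
\]
where the subscript $0$ denotes the Euclidean quantities on $R\T_\omega \subset (\R^3, g_0)$, and the uniformity in $R, \omega, P$ follows by compactness of $SO(3)$, $K$ and $M$.

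Finally, since $\T$ is a Willmore surface in $\R^3$ and $W_0$ is invariant under M\"obius transformations (Proposition \ref{p:Mobinv}), each $R\T_\omega$ is also Willmore in $(\R^3, g_0)$, hence $L_0 H_0 + \frac{1}{2}H_0^3 \equiv 0$ on $R\T_\omega$. Combining this with the previous display yields
\[
\|W'_{g_\e}(\Sigma)\|_{C^k(\Sigma)} = \|L_{g_{\e,P}} H_{g_{\e,P}} + \tfrac{1}{2} H_{g_{\e,P}}^3\|_{C^k(R\T_\omega)} \leq C_{K,k}\,\e^2,
\]
as desired. The only nontrivial point is keeping track of the uniformity of constants in $(P,R,\omega)$ when expanding $H$, $A$, $\Ric(n,n)$, $\Delta_{\bar g} H$ in powers of $\e^2 h^\e_P$, but this follows from the compactness of the parameter space and the quantitative control \eqref{eq:esti-he}--\eqref{eq:met-deri} on the metric and its derivatives in both $P$ and $y$; no cancellation or curvature-specific argument is required at this stage.
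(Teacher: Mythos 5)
Your proposal is correct and follows essentially the same route as the paper: pull back to $R\T_\omega$ in normal coordinates, use \eqref{eq:ge=d+eh}--\eqref{eq:esti-he} to get the uniform $O(\e^2)$ bound $\|g_{\e,P,\a\b}-\delta_{\a\b}\|_{C^k}\leq C_{K,k}\e^2$ on a ball containing all the $R\T_\omega$, deduce the same bound for the difference of all geometric quantities entering $W'$, and conclude via $W_{g_0}'(R\T_\omega)=0$ by conformal invariance. No gaps.
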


\begin{proof}
By Proposition \ref{p:1-2-var}, we have 
	\be\label{eq:W'}
		W_{g_{\e}}'(\Sigma)= -\Delta_{g_{\e}} H_{g_{\e}} - 
		|A_{g_{\e}}|^2 H_{g_{\e}} 
		- H_{g_{\e}} \Ric_{g_{\e}} (n_{g_{\e}},n_{g_{\e}}) 
		+ \frac{1}{2} H_{g_\e}^3
	\ee
in the sense that $\d W_{g_{\e}}(\Sigma)[\varphi]=
\int_\Sigma W'_{g_{\e}} (\Sigma)\, \varphi \; d \s_{g_{\e}}$. 
To prove the lemma we apply this formula to a surface 
$\Sigma \in \tilde{\mathcal{T}}_{\e, K}$ endowed with the metric 
induced by $g_\e$ defined in \eqref{eq:def-ge} 
(or \eqref{eq:ge=d+eh}).

		First, by the properties of $\rho_0$ and $\e_0$, and 
the definition of $g_\e$, we may find $\rho_{K}>0$ such that 
	\[
		| x |_{g_{0}} \leq \rho_{K} \leq \frac{\rho_{0}}{\e} \qquad 
		{\rm for\ each}\ x \in R \T_{\o} \subset T_PM \quad 
		{\rm and} \quad 
		(\e,P,R,\o) \in (0,\e_{0}] \times M \times SO(3) \times K.
	\]
Moreover, from \eqref{eq:ge=d+eh} and \eqref{eq:esti-he}, 
for each $k \in \N$, there exists $C_{K,k}>0$ such that 
	\begin{equation}\label{eq:diff-ge-g0}
		\left\| g_{\e,P,\a \b} - \delta_{\a \b} 
		\right\|_{C^{k}(\overline{B_{\rho_{K}}(0)})} 
		\leq C_{K,k} \e^{2} 
		\quad {\rm for\ each} \ 
		(\e,P,R,\o) \in (0,\e_{0}] \times M \times SO(3) \times K.
	\end{equation}
By \eqref{eq:diff-ge-g0}, we may compare the geometric quantities with 
the Euclidean ones and obtain the following estimates: 
	\begin{equation}\label{eq:diff-geo-qu}
		\begin{aligned}
			&
			\left\| \Gamma_{\e ,P, \mu \nu}^{\xi} 
			\right\|_{C^{k}( \overline{B_{\rho_{K}}(0)} )} 
			\leq C_{K,k} \e^{2}, 
			\quad 
			\left\| \Ric_{\e ,P}
			\right\|_{C^{k}( \overline{B_{\rho_{K}}(0)} )} 
			\leq C_{K,k} \e^{2},
			\\
			&
			\left\| A_{\e ,P,R,\o} - A_{0,R,\o} 
			\right\|_{C^{k}(R \T_{\o} )} 
			\leq C_{K,k} \e^{2},
			\quad 
			\left\| \left( 
			\Delta_{\e ,P,R,\o} - \Delta_{0,R,\o} \right) 
			\varphi \right\|_{C^{k}(R\T_{\o})} 
			\leq C_{K,k} \e^{2} \| \varphi \|_{C^{k+2}(R\T_{\o})}
		\end{aligned}
	\end{equation}
for each $(\e,P,R,\o) \in (0,\e_{0}] \times M \times SO(3) \times K$ 
and $\varphi \in C^{k+2}(R\T_{\o})$. 
Here $\Gamma^{\xi}_{\e ,P,\mu \nu}$, 
$\Ric_{\e ,P}$, 
$A_{\e ,P,R,\o}$, 
$A_{0,R,\o}$, $\Delta_{\e ,P,R,\o}$ and 
$\Delta_{0,R,\o}$ 
stand for the Christoffel symbol and the Ricci tensor of $(B_{\rho_{K}}(0),g_{\e,P})$, 
and the second fundamental form and the Laplace--Beltrami operator of 
$R \T_{\o}$ with $g_{\e,P}$ and $g_{0}$, respectively.

Now, from \eqref{eq:W'}, \eqref{eq:diff-ge-g0} and \eqref{eq:diff-geo-qu}, 
it follows that 
	\[
		\left\| W_{g_{\e}}'(\Sigma) - W_{g_{0}}'( R \T_{\o} ) 
		\right\|_{C^{k}( R \T_{\o} )} \leq C_{K,k} \e^{2}.
	\]
Recalling that the Clifford torus 
(and all its images under M\"obius transformation, by conformal invariance of $W$ in $\R^3$) 
is a critical point of the Willmore functional, 
namely $W_{g_{0}}'( R \T_{\o} ) = 0$, Lemma \ref{l:appsol} follows. 
\end{proof}

\subsection{Reduction to a finite-dimensional problem}

Let $R \in SO(3)$ and $\o \in \DD$. 
As in Subsection \ref{SS:BasicW}, 
we pull back a neighbourhood of 
$R\T_\o $ with the metric $g_{\e ,P}$ 
onto that of $\T$. 
Namely, we define 
$g_{\e ,P,R,\o}$ by the pull back of $g_{\e ,P}$ 
via the map $R \circ T_\o $: $ g_{\e ,P,R,\o} := (R \circ T_\o)^\ast g_{\e ,P} 
= T_\o^\ast \circ R^\ast \circ (\exp_P^{g_\e})^\ast g_\e $. 
Remark that $(\T,g_{\e ,P,R,\o})$ is isometric to 
$(\exp^{g_\e}_P (R \T_\o),g_\e  )$ or 
$(R \T_\o , g_{\e ,P})$. 
Let $n_{\e ,P,R,\o}$ be the unit outer normal of $(\T,g_{\e ,P,R,\o})$.

As before, we consider perturbations of $(\T,g_{\e ,P,R,\o})$ by regular 
functions $\varphi : \T \to \R$: 
	\begin{equation}\label{eq:defSw}
		\begin{aligned}
			\left(\T[\varphi]\right)_{\e ,P,R,\o}
			&:= \left\{ p + \varphi(p) n_{\e ,P,R,\o}(p) \; : \; p \in \T  \right\}, 
			\\
			\left( R \T_\o [\varphi] \right)_{\e ,P} 
			&:= \left\{ R T_\o (p + \varphi(p) n_{\e ,P,R,\o}(p) ) \; : \; p \in \T  \right\},
			\quad 
			\Sigma_{\e ,P,R,\o}[\varphi] := \exp^{g_\e}_{P} 
			\left(  (R \T_\o [\varphi] )_{\e ,P} \right). 
		\end{aligned}
	\end{equation}
Given a positive constant $\ov{C}$, we next define the family of functions
\[
   \mathcal{M}_{\e,P,R,\o} = \left\{  
   \var\in C^{4,\a}(\T) \; : \; 
   \|\var \|_{C^{4,\a}(\T)} \leq \ov{C} \e^{2}   
   \text{ and such that} \left|\Sigma_{\e,P,R,\omega}[\var]\right|_{g_\e} 
   = 4 \sqrt{2} \pi^2
   \right\}.
\]
Since each element of $\mathcal{T}_{\e ,K}$ is described as 
$\Sigma_{\e ,P,R,\o}[0]$, we notice that, in metric $g_\e$, 
the Hausdorff distance of elements 
$\Sigma_{\e,P,R,\omega}[\var]$ 
($\var \in \mathcal{M}_{\e,P,R, \omega}$ and $\o \in K$) 
from the previous approximate solutions ${\mathcal{T}}_{\e, K} $ 
is of order $\e^2$, 
with a constant depending on $\ov{C}$ and $K$. 

\

\noindent We next define $\mathcal{M}_{\e}$ to be the Banach manifold of surfaces of the type 
$\Sigma_{\e,P,R,\omega}[\var]$ with $P$ varying in $M$, 
$\var \in \mathcal{M}_{\e,P,R,\o}$, $R$ in $SO(3)$ and $\o $ in $\DD$. 
Notice that, by construction, elements in 
$\mathcal{M}_{\e}$ all satisfy the desired area constraint. 

\

\noindent The surfaces in $\mathcal{T}_{\e, K}$   form a seven-dimensional 
sub-manifold in $\mathcal{M}_\e$: we will show that the Willmore functional is 
indeed  non-degenerate in directions orthogonal to $\mathcal{T}_{\e, K}$. 
%We call $\tilde{L}_{\e,P,R,\o,\varphi}$ the linearised operator of 
%$W_{g_{\e}}'(\Sigma_{\e,P,R,\omega} [\varphi])$ defined in Proposition \ref{p:1-2-var}. 
%See also Remark \ref{rem:sec-der-will}. 

\

		\noindent
		To observe the non-degeneracy of elements in $\mathcal{T}_{\e,K}$, 
we introduce the eight-dimensional vector space
	\[
		\mathcal{K}^{\varphi}_{\e,P,R,\o} := 
		{\rm span}\, 
		\{ H_{\e,P,R,\o}[\varphi], Z_{1,R,\o}, \ldots, Z_{7,R,\o} \}
	\]
where $\varphi \in C^{4,\a}(\T)$ and $H_{\e,P,R,\o}[\varphi]$ 
denotes the mean curvature of $\Sigma_{\e,P,R,\o}[\varphi]$ 
in the metric $g_{\e}$. 
Since we are only interested in $\varphi$ whose 
$C^{4,\alpha}$-norm is small, 
$\Sigma_{\e,P,R,\o}[\varphi]$ can be written as the normal graph on 
$\T$ and this correspondence is diffeomorphic. 
Therefore, we pull back geometric quantities of $\Sigma_{\e ,P,R,\o}[\varphi]$ onto $\T$. 
In particular, we write $\bar{g}_{\e ,P,R,\o}[\varphi]$ for pull back of 
the tangential metric of $\Sigma_{\e ,P,R,\o} [\varphi]$ on $\T$. 
Denote by $\la \cdot , \cdot \ra_{\e ,P,R,\o ,\varphi}$ 
and $L^2_{\e ,P,R,\o,\varphi}(\T)$ the $L^2$-inner product of $L^2(\T)$ 
with the metric $\bar{g}_{\e ,P,R,\o}[\varphi]$ and 
$(L^2(\T), \la \cdot , \cdot \ra_{\e ,P,R,\o ,\varphi} )$. 
Remark that 
when $\e =0$ and $\varphi = 0$, these symbols coincide 
with those introduced in Subsection \ref{SS:BasicW} and 
do not depend on $P \in M$, i.e., $g_{0,P,R,\o} = g_{0,R,\o}$ and so on.  
See the comments above Proposition \ref{p:nondeg}.

\

		\noindent
		Next, as in the proof of Proposition \ref{p:nondeg}, 
we normalize and orthogonalize $H_{\e,P,R,\o}[\varphi]$ and 
$\{Z_{i,R,\o}\}_{i=1}^{7}$ in $L^2_{\e ,P,R,\o,\varphi}(\T)$. 
Namely, we first normalize and orthogonalize  
$\{Z_{i,R,\o}\}_{i=1}^{7}$ to get $\{Y_{i,\e,P,R,\o}[\varphi]\}_{i=1}^{7}$. 
Then we obtain $Y_{0,\e,P,R,\o}[\varphi]$ from $H_{\e,P,R,\o}[\varphi]$ 
and $\{ Y_{i,\e,P,R,\o}[\varphi] \}_{i=1}^{7}$, and we may assume that 
	\[
		\left\{Y_{0,\e,P,R,\o}[\varphi], Y_{1,\e,P,R,\o}[\varphi], \ldots, 
		Y_{7,\e,P,R,\o}[\varphi] \right\}
	\]
is the $L^2_{\e ,P,R,\o,\varphi}(\T)$-orthonormal basis 
of $\mathcal{K}^{\varphi}_{\e,P,R,\o}$. 
We also define the 
$L^2_{\e ,P,R,\o,\varphi}(\T)$-projection to the space 
$(\mathcal{K}^{\varphi}_{\e,P,R,\o})^{\perp \bar{g}_{\e,P,R,\o}[\varphi]}$ by 
	\[
		\begin{aligned}
			&\Pi^{\varphi}_{\e,P,R,\o} : 
			L^2_{\e ,P,R,\o,\varphi}(\T) \to 
			(\mathcal{K}^{\varphi}_{\e,P,R,\o})^{\perp \bar{g}_{\e,P,R,\o}[\varphi]}, 
			\\ 
			&\Pi^{\varphi}_{\e,P,R,\o} \psi 
			:= \psi - \sum_{i=0}^{7} \la \psi, Y_{i,\e,P,R,\o}[\varphi] 
			\ra_{L^2_{\e ,P,R,\o,\varphi}} Y_{i,\e,P,R,\o}[\varphi].
		\end{aligned}
	\]
Finally, we also define 
$Y_{i,R,\o}[\varphi]$ and $\Pi^{\varphi}_{0,R,\o}$ 
for the Euclidean metric in a similar way to above. 
Remark that $Y_{i,0,P,R,\o}[\varphi] = Y_{i,R,\o}[\varphi]$, 
$\Pi^\varphi_{0,P,R,\o} = \Pi^\varphi_{0,R,\o}$ and 
$\Pi^0_{0,R,\o} = \Pi_{0,R,\o}$ hold.

Regarding the properties of $\{Y_{i,\e,P,R,\o}[\varphi]\}_{i=0}^{7}$ and 
$\Pi^{\varphi}_{\e,P,R,\o}$, we have

		\begin{lem}\label{l:prop-Y}
			Fix a compact set $K \subset \DD$, $\ell \in \N$ and $\alpha \in (0,1)$.
			Then there exist $r_{K,\ell,1}>0$, $\e _{K,\ell,1} > 0$ and $C_{K,\ell}$ such that  
			
			\item[\rm (a)] 
			For each $i = 0 , \ldots, 7$ and $\e \in (0,\e _{K,\ell,1} ]$, the map 
				\[
					\overline{B_{r_{K,\ell,1},C^{4+\ell,\a}}(0)} \ni \varphi 
					\mapsto Y_{i,\e,P,R,\o}[\varphi] 
					\in C^{2+\ell,\a}(\T)
				\]
			is smooth and $D^\ell_\varphi (Y_{i,\e ,P,R,\o} [\varphi] )$ 
			is a bounded map in $\overline{B_{r_{K,1},C^{4+\ell,\a}}(0)}$ 
			and the bounds only depend on $r_{K,\ell,1}$, $K$ and $\e _{K,\ell,1} $. 
			Here $B_{r_{K,\ell,1},C^{k,\a}}(0)$ denotes the ball 
			in $C^{k,\a}(\T)$ centred at the origin with the radius $r_{K,\ell,1}$ 
			and $D_\varphi$ does the derivative with respect to $\varphi$. 
			In particular, the map 
				\[
					\overline{B_{r_{K,1}, C^{4+\ell,\a}}(0)} 
					\ni \varphi \mapsto \Pi^{\varphi}_{\e,P,R,\o} 
					\in \mathcal{L}(C^{2+\ell,\a}( \T ), C^{2+\ell,\a}( \T ))
				\]
			is smooth and $D^\ell_\varphi \Pi^\varphi_{\e ,P,R,\o}$ is bounded 
			and the bounds also depend only on $r_{K,1}$, $K$ and $\e _{K,1} $. 
		
		\item[\rm (b)] 
			For every $(\e,P,R,\o) \in (0,\e _{K,1} ] \times M \times SO(3) \times K$ 
			and $\| \varphi \|_{C^{4+\ell,\a}(\T)} \leq r_{K,1}$, 
			there holds 
				\[
					\begin{aligned}
						&\| Y_{i,\e,P,R,\o}[\varphi] - Y_{i,R,\o}[\varphi] 
						\|_{C^{2+\ell,\a}( \T )} 
						+ \| D_\varphi (Y_{i,\e,P,R,\o}[\varphi]) 
						- D_\varphi (Y_{i,R,\o}[\varphi]) 
						\|_{\mathcal{L}(C^{4+\ell,\a}(\T), C^{2+\ell,\a}(\T)) }
						\\
						& \qquad 
						+ \| D_\varphi^2 (Y_{i,\e,P,R,\o}[\varphi]) 
						- D_\varphi^2 (Y_{i,R,\o}[\varphi]) 
						\|_{\mathcal{L}^2(C^{4+\ell,\a}(\T), 
							 C^{2+\ell,\a}(\T)) }
						\leq C_{K,\ell} \e^{2}
					\end{aligned}
				\]
			where $\mathcal{L}^2(C^{4+\ell,\a}(\T), C^{2+\ell,\a}(\T)  )$ is a Banach space 
			of all multilinear maps from $C^{4+\ell,\a}(\T) \times C^{4+\ell,\a}(\T)$ 
			to $C^{2+\ell,\a}(\T)$. 
			
			\item[\rm (c)] 
				For every fixed $\varphi \in \overline{B_{r_{K,1},C^{4+\ell,\a}}(0)} $, the map 
					\[
						(P,R,\o) \mapsto Y_{i,\e,P,R,\o}[\varphi] : \quad 
						M \times SO(3) \times K 
						\to C^{2+\ell,\a}(\T) 
					\]
				is smooth. Moreover, 
					\[
						\sum_{k=0}^2 
						\left\| D_{P,R,\o}^k 
						\left(  Y_{i,\e ,P,R,\o}[\varphi] - Y_{i,R,\o}[\varphi] \right) 
						\right\|_{C^{2+\ell,\a}(\T)} \leq 
						C_{K,\ell} \left( \e^2 + \| \varphi \|_{C^{4+\ell,\a}(\T)} \right). 
					\]
		\end{lem}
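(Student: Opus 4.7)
The plan is to observe that the Gram--Schmidt orthonormalization is an explicit rational function of inner products and norms, so once one has smooth and uniformly controlled dependence of the underlying ingredients on $(\e, P, R, \o, \varphi)$, all three conclusions transfer to the $Y_{i,\e,P,R,\o}[\varphi]$. The ingredients are three: the induced tangential metric $\bar{g}_{\e,P,R,\o}[\varphi]$, the mean curvature $H_{\e,P,R,\o}[\varphi]$, and the fixed Jacobi fields $Z_{1,R,\o}, \ldots, Z_{7,R,\o}$. The map $\varphi \mapsto \bar{g}_{\e,P,R,\o}[\varphi]$ is smooth from a ball in $C^{4+\ell,\a}(\T)$ into $C^{3+\ell,\a}(\T)$, since it involves one derivative of $\varphi$ composed with a smooth function of $g_{\e,P}$; similarly $\varphi \mapsto H_{\e,P,R,\o}[\varphi]$ is smooth into $C^{2+\ell,\a}(\T)$, being a nonlinear second-order operator in the graph function. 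Smoothness in $(P,R,\o)$ follows from the smoothness of $\exp_P^{g_\e}$ and of $R \circ T_\o$. Combining \eqref{eq:diff-ge-g0}, \eqref{eq:diff-geo-qu} and \eqref{eq:met-deri} I would deduce the comparison bounds
$$
\left\| \bar{g}_{\e,P,R,\o}[\varphi] - \bar{g}_{0,R,\o}[\varphi] \right\|_{C^{3+\ell,\a}} + \left\| H_{\e,P,R,\o}[\varphi] - H_{0,R,\o}[\varphi] \right\|_{C^{2+\ell,\a}} \leq C_{K,\ell}\, \e^{2}
$$
on $\|\varphi\|_{C^{4+\ell,\a}} \leq r_{K,\ell,1}$, together with analogous bounds on their first and second $\varphi$-derivatives and on their $(P,R,\o)$-derivatives.

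Next, I would establish uniform linear independence of $\{H_{\e,P,R,\o}[\varphi], Z_{1,R,\o}, \ldots, Z_{7,R,\o}\}$ in $L^2_{\e,P,R,\o,\varphi}(\T)$. For $\e = 0 = \varphi$ and $\o \in K \subset \DD \setminus \{0\}$, this is a direct consequence of Proposition \ref{p:nondeg} combined with the area-preserving property $\la Z_{i,R,\o}, H_{0,R,\o} \ra_{L^2_{0,R,\o}} = 0$ for $1 \leq i \leq 7$, which forces the $8\times 8$ Gram matrix to decouple into a positive scalar (coming from $H$, whose norm is bounded below by \eqref{eq:bdd-below}) and a positive-definite $7\times 7$ block; near $\o=0$ one replaces $Z_{6,R,\o}$ by the two Mobius parameters as in Remark \ref{r:nondeg}. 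By continuity of the Gram matrix in $(\e,P,R,\o,\varphi)$ and compactness of $K$, this matrix remains uniformly invertible for $\e$ and $\|\varphi\|_{C^{4+\ell,\a}}$ small. This is precisely the quantitative hypothesis needed to run Gram--Schmidt via the explicit recursive formulas, and to obtain both smoothness and bounded $\varphi$- and $(P,R,\o)$-derivatives by differentiating those formulas.

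With these in hand, part (a) is a direct application of the chain rule to the Gram--Schmidt formulas, using smoothness and boundedness of the ingredients together with the uniform lower bound on the denominators. For parts (b) and (c) I would expand the difference $Y_{i,\e,P,R,\o}[\varphi] - Y_{i,R,\o}[\varphi]$ as a finite combination of differences of inner products, norms and the base vectors $H_{\cdot,\cdot,R,\o}[\varphi]$ and $Z_{j,R,\o}$: each such difference is $O(\e^{2})$ by the comparison bounds above (noting that the $Z_{j,R,\o}$ themselves do not depend on $\e$ or $P$, so only the metric used in the pairings contributes), giving the claimed $\e^{2}$ estimate in (b); differentiating the same expansion once and twice in $\varphi$ and using the control on $\varphi$-derivatives of the ingredients yields the estimates on $D_\varphi$ and $D_\varphi^{2}$. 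For (c), the smoothness in $(P,R,\o)$ is inherited from the smoothness of $\exp_P^{g_\e}$ and of $R\circ T_\o$, while \eqref{eq:met-deri} supplies the $\e^{2}$ control on first and second derivatives of $g_{\e,P}$ in $P$, yielding the bound with the extra additive term $\|\varphi\|_{C^{4+\ell,\a}}$ (which accounts for the $\varphi$-dependent part that need not be $O(\e^{2})$). The main obstacle is to keep all constants depending only on $K$ and $\ell$ as $\o$ crosses $0$, where the Clifford torus has an $SO(2)$-symmetry and one Jacobi field degenerates; this requires careful bookkeeping of the basis change of Remark \ref{r:nondeg} and a verification that the resulting Gram matrix is uniformly invertible across this transition.
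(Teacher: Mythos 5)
Your proposal is correct and follows essentially the same route as the paper: smoothness and uniform $O(\e^{2})$ comparison of the underlying geometric data (the pulled-back metric $g_{\e,P,R,\o}$, the normal, the position vector $\mathcal{Z}_{\e,P,R,\o}[\varphi]$ and hence $\bar{g}_{\e,P,R,\o}[\varphi]$ and $H_{\e,P,R,\o}[\varphi]$), obtained from \eqref{eq:ge=d+eh}--\eqref{eq:met-deri}, are then transferred to the $Y_{i,\e,P,R,\o}[\varphi]$ through the explicit Gram--Schmidt formulas. The only difference is that you spell out the uniform invertibility of the Gram matrix (using the area-preserving orthogonality $\la Z_{i,R,\o},H_{0,R,\o}\ra_{L^2_{0,R,\o}}=0$, \eqref{eq:bdd-below}, and the basis change of Remark \ref{r:nondeg} near $\o=0$), a point the paper leaves implicit, which is a welcome clarification rather than a deviation.
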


		\begin{proof}
We first remark that 
	\[
		\begin{aligned}
			g_{\e ,P,R,\o ,\alpha \beta} (y) 
			&= g_{\e ,P} ( R T_\o (y) ) 
			[ D(R T_\o)(y) [\mathbf{e}_\alpha] , D(R T_\o)(y) [\mathbf{e}_\beta]  ], 
			\\
			g_{0,R,\o} (y) 
			&= g_0 [ D(R T_\o)(y) [\mathbf{e}_\alpha] , D(R T_\o)(y) [\mathbf{e}_\beta]  ],
		\end{aligned}
	\]
where $\mathbf{e}_\alpha$ denotes the canonical basis in $\R^3$. 
Recalling \eqref{eq:ge=d+eh} and \eqref{eq:met-deri}, 
it is easily seen that for any $\ell \in \N$, we have 
	\begin{equation}\label{eq:diff-metric}
		\sum_{k=0}^2 
		\left\| D_{P,R,\o}^k 
		\left(  g_{\e ,P,R,\o ,\a \b} - g_{0,P,R,\o,\a \b} \right)
		\right\|_{C^{\ell} (  \overline{B_{\rho_K}(0)}  ) } 
		\leq C_{K,\ell} \e^2
	\end{equation}
where $\rho_K$ appears in the proof of Lemma \ref{l:appsol}. 
This yields 
	\begin{equation}\label{eq:diff-normal}
		\sum_{k=0}^2 
		\left\| D_{P,R,\o}^k 
		\left(  n_{\e ,P,R,\o} - n_{0,R,\o} \right)
		\right\|_{C^{\ell} (  \T  ) } 
		\leq C_{K,\ell} \e^2. 
	\end{equation}

		We define position vectors for $\Sigma_{\e ,P,R,\o}[\varphi]$ and 
$R \T_\o [\varphi]$ in $(\R^3 , g_{\e ,P,R,\o})$ and $(\R^3,g_{0,R,\o})$ by
	\[
		\mathcal{Z}_{\e ,P,R,\o} [\varphi] (p) 
		:= p + \varphi(p) n_{\e ,P,R,\o}(p) , \qquad 
		\mathcal{Z}_{0,R,\o} [\varphi] (p) 
		:=  p + \varphi (p) n_{0,R,\o}(p). 
	\]
Clearly, the map 
	\[
		M \times SO(3) \times K \times C^{4+\ell,\a}(\T) \ni (P,R,\o ,\varphi) 
		\mapsto (\mathcal{Z}_{\e ,P,R,\o} [\varphi], \mathcal{Z}_{0,R,\o} [\varphi] ) 
		\in \left( C^{4+\ell,\a}(\T,\R^3) \right)^2
	\]
is smooth. Hence, we also see the smoothness of maps 
	\[
		\begin{aligned}
			&(P,R,\o ,\varphi) \mapsto g_{\e ,P,R,\o,\a \b}( \mathcal{Z}_{\e ,P,R,\o}[\varphi] )
			\ : \ M \times SO(3) \times K \times 
			\overline{B_{r_{K,\ell,1}, C^{4+\ell,\a }}(0)} \to C^{4+\ell,\a}(\T,\R),
			\\
			&(P,R,\o ,\varphi) \mapsto \bar{g}_{\e ,P,R,\o}[\varphi] 
			\ : \ M \times SO(3) \times K \times \overline{B_{r_{K,\ell,1}, C^{4+\ell,\a }}(0)} 
			\to C^{3+\ell,\a}  (\T,  (T\T)^\ast \otimes (T\T)^\ast ),
			\\
			&(P,R,\o ,\varphi) \mapsto n_{\e ,P,R,\o}[\varphi] 
			\ : \ M \times SO(3) \times K \times \overline{B_{r_{K,\ell,1}, C^{4+\ell,\a }}(0)} 
			\to C^{3+\ell,\a} (\T, \R^3),
			\\
			&(P,R,\omega,\varphi) \mapsto H_{\e ,P,R,\o} [\varphi] \ :\ 
			M \times SO(3) \times K \times 
			\overline{B_{r_{K,\ell,1},C^{4+\ell,\a}}(0)} \to C^{2+\ell,\a}(\T)
		\end{aligned}
	\]
where $n_{\e ,P,R,\o}[\varphi]$ denotes the unit outer normal 
of $(\T [\varphi])_{\e ,P,R,\o} $ in 
the metric $g_{\e ,P,R,\o}$. 
From this fact, it is easily seen that the assertion (a) holds.

		Moreover, by \eqref{eq:diff-normal}, there holds 
	\begin{equation}\label{eq:esti-Ze-Z0}
		\begin{aligned}
			&\sum_{k=0}^2 
			\left\| D_{P,R,\o}^k \left( \mathcal{Z}_{\e ,P,R,\o} [\varphi]  
			- \mathcal{Z}_{0,R,\o} [\varphi]
			\right) \right\|_{C^{4+\ell,\a} (\T,\R^3) } 
			\leq C_{K,\ell} \| \varphi \|_{C^{4+\ell,\a}(\T)} \e^2,
			\\
			& \left\| D_{\varphi} \left( \mathcal{Z}_{\e ,P,R,\o} [\varphi] 
			- \mathcal{Z}_{0,R,\o} [\varphi]  \right) 
			\right\|_{ \mathcal{L} ( C^{4+\ell,\a }(\T), C^{4+\ell,\a }(\T,\R^3 ) )}
			\\
			& \qquad + 
			\left\| D_{\varphi}^2 \left( \mathcal{Z}_{\e ,P,R,\o} [\varphi] 
			- \mathcal{Z}_{0,R,\o} [\varphi]  \right) 
			\right\|_{ \mathcal{L}^2 ( C^{4+\ell,\a }(\T), 
			C^{4+\ell,\a }(\T,\R^3 ) )} 
			\leq C_{K,\ell} \e^2. 
		\end{aligned}
	\end{equation}
Noting that similar estimates hold 
for $g_{\e ,P,R,\o ,\a \b} ( \mathcal{Z}_{\e ,P,R,\o}[\varphi] )$, 
$\bar{g}_{\e ,P,R,\o}[\varphi]$, $n_{\e ,P,R,\o}[\varphi]$ and 
$H_{\e ,P,R,\o}[\varphi]$ thanks to \eqref{eq:diff-metric}, 
we observe that assertions (b) and (c) also hold. 
		\end{proof}

\noindent We next proceed at a non-linear level, 
exploiting Proposition \ref{p:nondeg} and using the inverse mapping theorem.

\begin{pro}\label{p:lyap}
Fix a compact subset $K$ of  $\DD$ as above. 
Then there exist positive constants 
$\bar{C}_K$ and $0<\e_{K,2} \leq \e_{K,1,1}$ such that  
for any $\e \in (0,\e_{K,2}]$ and 
$(P,R,\o) \in M \times SO(3) \times K$, 
there exists a  function 
$\var_{\e} = \var_{\e}(P,R,\omega) \in C^{5,\a}(\T)$ 
satisfying the following:
	$$
		\begin{aligned}
			& j) \quad  W_{g_{\e}}'(\Sigma_{\e,P,R,\omega} 
			[\var_{\e}]) 
			= \b_0 H_{\e,P,R,\o}[\var_\e] 
			+ \sum_{i=1}^7 \beta_i Z_{i,R,\o}; 
			& &
			jj) \quad |\Sigma_{\e,P,R,\omega}  [\var_{\e}]|_{g_{\e}} 
			= 4 \sqrt{2} \pi^2;\\
			&
			jjj) \quad \la Y_{j,\e ,P,R,\o}[\varphi_\e] , \varphi_\e 
			\ra_{L^2_{\e ,P,R,\o ,\varphi_\e}} = 0 
			\quad (1 \leq j \leq 7); 
			& & 
			jjjj) \quad \| \varphi_{\e}(P,R,\o) \|_{C^{5,\a}(\T)} 
			\leq \bar{C}_{K} \e^{2}
	 \end{aligned}
$$
for some numbers $\b_0, \dots, \b_7$. 
In particular, $\varphi_{\e}(P,R,\o) \in \mathcal{M}_{\e ,P,R,\o }$. 
Moreover, for each $\e\in (0,\e_{K,2}]$, the map 
	\[
		M \times SO(3) \times K \to C^{5,\a}(\T) : \quad 
		(P,R,\o) \mapsto \varphi_{\e}(P,R,\o)
	\]
is smooth and satisfies 
	\[
		\| D_{P,R,\o} \varphi_{\e}(P,R,\o) \|_{C^{5,\a}(\T)} 
		+ \| D^2_{P,R,\o} \varphi_\e (P,R,\o) \|_{C^{5,\a}(\T)} 
		\leq C_{K} \e^2. 
	\]
\end{pro}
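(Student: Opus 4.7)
The plan is to carry out a Lyapunov-Schmidt reduction via the implicit function theorem, combining three ingredients already in place: the approximate-solution estimate $\| W'_{g_\e}(\Sigma) \|_{C^{1,\a}} = O(\e^2)$ from Lemma \ref{l:appsol}, the uniform invertibility of $\Pi_{0,R,\o} \tilde L_{0,R,\o}$ on $\mathcal{K}_{0,R,\o}^{\perp \bar g_{0,R,\o}}$ from Proposition \ref{p:nondeg}, and the smoothness/near-Euclidean estimates for the projections and normalized Jacobi fields from Lemma \ref{l:prop-Y}. Concretely, fix $(P,R,\o) \in M \times SO(3) \times K$ and introduce
\[
\mathcal{F}_{\e,P,R,\o}(\varphi) := \Pi^{\varphi}_{\e,P,R,\o}\, W'_{g_\e}\bigl(\Sigma_{\e,P,R,\o}[\varphi]\bigr).
\]
Imposing in addition the area constraint (jj) and the seven orthogonality conditions (jjj), Lemma \ref{l:prop-Y}(a) shows that these constraints cut out, for $\varphi$ in a small $C^{5,\a}$-ball, a smooth codimension-$8$ Banach submanifold $\mathcal{N}_{\e,P,R,\o}$ whose tangent space at $0$ is an $O(\e^2)$-perturbation of $\mathcal{K}_{0,R,\o}^{\perp \bar g_{0,R,\o}}$ intersected with the area kernel. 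Equation (j) is equivalent to $\mathcal{F}_{\e,P,R,\o}(\varphi) = 0$, the multipliers $\b_0,\dots,\b_7$ being recovered afterwards as the components of $W'_{g_\e}(\Sigma_{\e,P,R,\o}[\varphi_\e])$ along the basis $Y_{0,\e,P,R,\o}[\varphi_\e], \dots, Y_{7,\e,P,R,\o}[\varphi_\e]$ of $\mathcal{K}^{\varphi_\e}_{\e,P,R,\o}$.

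Next I would identify the linearization at $\varphi=0$. By Remark \ref{rem:sec-der-will} the Fr\'echet derivative of $\varphi \mapsto W'_{g_\e}(\Sigma_{\e,P,R,\o}[\varphi])$ at $\varphi=0$ is the fourth-order operator $\tilde L_{\e,P,R,\o}$ of Proposition \ref{p:1-2-var}; using Lemma \ref{l:prop-Y}(a)-(b) together with the metric expansions \eqref{eq:ge=d+eh}--\eqref{eq:met-deri} this yields
\[
D_\varphi \mathcal{F}_{\e,P,R,\o}(0) = \Pi_{0,R,\o}\, \tilde L_{0,R,\o} + O_{C^{5,\a}\to C^{1,\a}}(\e^2).
\]
Proposition \ref{p:nondeg} then delivers, on the tangent space of $\mathcal{N}_{\e,P,R,\o}$ at $0$, a uniformly bounded two-sided estimate with constants depending only on $K$ and $\a$, so that for $\e$ small the linearization is an isomorphism uniformly in $(P,R,\o) \in M \times SO(3) \times K$.

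The conclusion then follows from a quantitative Banach contraction applied to the map $\varphi \mapsto \varphi - [D_\varphi \mathcal{F}_{\e,P,R,\o}(0)]^{-1}\mathcal{F}_{\e,P,R,\o}(\varphi)$ on $\{\|\varphi\|_{C^{5,\a}} \leq \bar C_K \e^2\} \cap \mathcal{N}_{\e,P,R,\o}$: Lemma \ref{l:appsol} gives $\|\mathcal{F}_{\e,P,R,\o}(0)\|_{C^{1,\a}} \leq C_K \e^2$, the uniformly bounded inverse yields existence within the claimed radius, and the nonlinear residual is quadratic in $\|\varphi\|_{C^{5,\a}}$ since all data depend smoothly on $\varphi$. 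Smoothness and the $O(\e^2)$ bound on $D_{P,R,\o}\varphi_\e$ and $D^2_{P,R,\o}\varphi_\e$ then come from differentiating the fixed-point identity and invoking Lemma \ref{l:prop-Y}(c) together with \eqref{eq:met-deri}. The principal delicate point is the \emph{uniformity} of all constants in the parameters: compactness in $P$ and $R$ is automatic, while uniformity across $\o \in K$ is exactly where the compactness of $K \subset \DD$ is crucial, as the nondegeneracy in Proposition \ref{p:nondeg} degenerates as $|\o|\to 1$; controlling the nonlinear dependence of the gauge-fixing conditions (jjj) on $\varphi$ is another point where Lemma \ref{l:prop-Y}(a) does essential work.
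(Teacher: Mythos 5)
Your plan follows the same Lyapunov--Schmidt scheme as the paper (approximate solutions from Lemma \ref{l:appsol}, non-degeneracy from Proposition \ref{p:nondeg}, smooth dependence from Lemma \ref{l:prop-Y}, then a quantitative inverse/implicit function theorem and differentiation of the resulting identity in $(P,R,\o)$), but it is organised differently: you restrict the projected equation $\Pi^{\varphi}_{\e,P,R,\o} W_{g_\e}'(\Sigma_{\e,P,R,\o}[\varphi])=0$ to the codimension-$8$ constraint set cut out by $jj)$ and $jjj)$, whereas the paper augments the projected gradient with the area-defect term $\bigl(|\Sigma_{\e,P,R,\o}[\varphi]|_{g_\e}-4\sqrt{2}\pi^2\bigr)H_{\e,P,R,\o}[\varphi]$ and the terms $\sum_{j}\la Y_{j,\e,P,R,\o}[\varphi],\varphi\ra\,Y_{j,\e,P,R,\o}[\varphi]$, obtaining one map $G_\e:C^{5,\a}(\T)\to C^{1,\a}(\T)$ between \emph{fixed} Banach spaces to which the inverse function theorem applies directly. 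The paper's device sidesteps two points your version leaves implicit: the target of your $\mathcal{F}_{\e,P,R,\o}$ is the $\varphi$-dependent space $(\mathcal{K}^{\varphi}_{\e,P,R,\o})^{\perp}$, so the contraction has to be set up on a moving target (or after composing with a fixed projection), and one must check that the nonlinear constraint map defining $\mathcal{N}_{\e,P,R,\o}$ is a submersion near $0$. Both are fixable and do not change the substance.

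The genuine gap is the claim that Proposition \ref{p:nondeg} makes the linearisation an isomorphism. The estimate \eqref{eq:CK} is a lower bound: it yields injectivity of $\Pi_{0,R,\o}\tilde{L}_{0,R,\o}$ on $\mathcal{K}_{0,R,\o}^{\perp\bar g_{0,R,\o}}$, closed range, and a bound for the inverse \emph{on the range}; in an infinite-dimensional Banach space a ``two-sided estimate'' does not give surjectivity, which is exactly what your contraction map needs in order to invert $D_\varphi\mathcal{F}(0)$. The paper proves surjectivity by a separate argument: it factors $\Pi^0_{0,R,\o}\tilde{L}_{0,R,\o}=\Pi^0_{0,R,\o}\circ\tilde{L}_{0,R,\o}\circ i_{0,R,\o}$ as a composition of Fredholm operators of indices $-8$, $0$ (elliptic regularity plus the Fredholm alternative for $\tilde{L}_{0,R,\o}:C^{5,\a}(\T)\to C^{1,\a}(\T)$) and $8$, so the composition has index $0$ and injectivity forces surjectivity; the remaining direction along $H_{0,R,\o}[0]$ is recovered from $\|H_{0,R,\o}[0]\|_{L^2_{0,R,\o}}^2>0$. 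You need this step (or an equivalent one, e.g. self-adjointness of $\tilde{L}_{0,R,\o}$ together with Weiner's identification of its kernel as ${\rm span}\{Z_{0,R,\o},\dots,Z_{7,R,\o}\}$) before the fixed-point argument; as written, ``two-sided estimate $\Rightarrow$ isomorphism'' is unjustified. The rest of your outline — the $O(\e^2)$ bound on the error from Lemma \ref{l:appsol}, the quadratic control of the nonlinear residual, and differentiating the fixed-point identity with Lemma \ref{l:prop-Y}(c) and \eqref{eq:met-deri} to get smoothness and the $O(\e^2)$ bounds on $D_{P,R,\o}\varphi_\e$ and $D^2_{P,R,\o}\varphi_\e$ — matches the paper's proof.
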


\begin{proof}
We first define $G_\e (P,R,\o ,\varphi) : M \times SO(3) \times K \times 
C^{5,\a}(\T) \to C^{1,\a}(\T)$ by 
	\[
		\begin{aligned}
			G_{\e}(P,R,\o ,\varphi) := &
			\Pi^\varphi_{\e ,P,R,\o} \left( W_{g_\e}'(\Sigma_{\e ,P,R,\o} [\varphi] )\right) 
			+ \left( | \Sigma_{\e ,P,R,\o} [\varphi] |_{g_\e} - 4 \sqrt{2} \pi^2 \right) 
			H_{\e ,P,R,\o}[\varphi] 
			\\
			&+ \sum_{j=1}^7 
			\la Y_{j,\e ,P,R,\o}[\varphi], \varphi \ra_{L^2_{\e ,P,R,\o ,\varphi}} 
			Y_{j,\e ,P,R,\o}[\varphi].
		\end{aligned}
	\]
Since $\Pi^\varphi_{\e ,P,R,\o}$ is $L^2_{\e ,P,R,\o}(\T)$-projection 
into $(\mathcal{K}_{\e ,P,R,\varphi}^\varphi)^{\perp \bar{g}_{\e ,P,R,\o}[\varphi]}$ 
and $\mathcal{K}_{\e ,P,R,\varphi}^\varphi$ is spanned by 
$H_{\e ,P,R,\o}[\varphi]$ and $Y_{j,\e ,P,R,\o}[\varphi]$ ($1 \leq j \leq 7$), 
in order to prove $j)$, $jj)$ and $jjj)$, it is enough to find 
$\varphi \in C^{5,\a}(\T)$ so that $G_{\e}(P,R,\o ,\varphi) = 0$.

		To this end, we first claim that 
there exist $r_{K,2} \in (0,r_{K,1,1}]$ and $\e_{K,2} \in (0,\e_{K,1,1}]$ such that 
$D_\varphi G_\e (P,R,\o ,\varphi)$ is invertible 
for every $(P,R,\o,\varphi) \in M \times SO(3) \times K \times C^{5,\a}(\T)$ 
provided $\| \varphi \|_{C^{5,\a}(\T)} \leq r_{K,2}$ and $0 < \e \leq \e_{K,2}$. 
We notice that $G_\e $ is smooth in each variable thanks to 
Lemma \ref{l:prop-Y}. 
Furthermore, by \eqref{eq:diff-metric}, \eqref{eq:diff-normal} and 
\eqref{eq:esti-Ze-Z0}, we observe that 
	\begin{equation}\label{eq:conv-Ge}
		\begin{aligned}
			&\sum_{k=0}^2 \left\| 
			D_{P,R,\o}^k \left( G_{\e} (P,R,\o,\varphi) - G_0(R,\o ,\varphi) \right)
			 \right\|_{C^{1,\a}(\T)} \leq C_K (\e^2 + \| \varphi \|_{C^{5,\a}(\T)} )
			 ,\\ 
			 &\left\| D_{\varphi} \left( G_{\e} (P,R,\o,\varphi) - G_0(R,\o ,\varphi) \right) 
			 \right\|_{ \mathcal{L}( C^{5,\a}(\T), C^{1,\a }(\T ) } 
			 \\
			 & \qquad 
			 + \left\| D_{\varphi}^2 \left( G_{\e} (P,R,\o,\varphi) - G_0(R,\o ,\varphi) \right) 
			 \right\|_{ \mathcal{L}^2( C^{5,\a}(\T), C^{1,\a }(\T ) }
			 \leq C_{K} \e^2
		\end{aligned}
	\end{equation}
for all $(P,R,\o ,\varphi )  \in M \times SO(3) \times K \times C^{5,\a}(\T) $ 
provided $\| \varphi \|_{C^{5,\a}(\T)} \leq r_{K,1,1}$ where 
$G_0(R,\o ,\varphi)$ is a corresponding map defined for the Euclidean metric:
	\[
		\begin{aligned}
			G_0(R,\o ,\varphi) := &
			\Pi^\varphi_{0,R,\o} \left( W_{g_0}'( R\T_\o [\varphi] )\right) 
			+ \left( | R \T_\o [\varphi] |_{g_0} - 4 \sqrt{2} \pi^2 \right) H_{0,R,\o}[\varphi]
			\\
			& + \sum_{j=1}^7 \la Y_{j,R,\o}[\varphi] , \varphi \ra_{L^2_{0,R,\o ,\varphi}} 
			Y_{j,R,\o}[\varphi].
		\end{aligned}
	\]
From \eqref{eq:conv-Ge}, it suffices to show that 
$D_\varphi G_0(R,\o ,0)$ is invertible.

		To see this, we remark that for $\psi \in C^{5,\a}(\T)$ and 
$|t| \ll 1$, the surface $R \T_\o [t \psi]$ is a perturbation in the normal direction. 
Hence, by $W_{g_0}'(R\T_0) = 0$ and Corollary \ref{c:1-2-flat}, we obtain 
	\[
		D_\varphi W_{g_0}'(R\T_\o [\varphi] ) \Big|_{\varphi =0} [\psi]  
		= \frac{d}{dt} W_{g_0}'(R\T_\o [t\psi] ) \Big|_{t=0}
		= \tilde{L}_{0,R,\o} \psi.
	\]
Moreover, by $|R \T_\o |_{g_0} = 4\sqrt{2} \pi^2$, one has 
	\[
		D_\varphi G_0(R,\o,0)[\psi] 
		=  \Pi^0_{0,R,\o} \tilde{L}_{0,R,\o} \psi 
		+\la H_{0,R,\o}[0], \psi \ra_{L^2_{0,R,\o}} H_{0,R,\o}[0] 
		+ \sum_{j=1}^7 \la Y_{j,R,\o}[0], \psi \ra_{L^2_{0,R,\o}}Y_{j,R,\o}[0]. 
	\]

		Now we show that $D_\varphi G_0(R,\o ,0)$ is injective. 
Let $D_\varphi G_0(R,\o ,0)[\psi] = 0$. Then we have 
	\[
		0 = \Pi^0_{0,R,\o} \tilde{L}_{0,R,\o} \psi = \la H_{0,R,\o}[0], \psi \ra_{L^2_{0,R,\o}}
		= \la Y_{j,R,\o}[0], \psi \ra_{L^2_{0,R,\o}} \quad 
			(1 \leq j \leq 7),
	\]
which implies $\psi \in \mathcal{K}_{0,R,\o}^{\perp \bar{g}_{0,R,\o}}$. 
Thus by Proposition \ref{p:nondeg} and 
$\Pi^0_{0,R,\o} \tilde{L}_{0,R,\o} \psi= 0$, one sees $\psi \equiv 0$ 
and we infer that $D_\varphi G_0(R,\o ,0)$ is injective.

		Next we prove that $D_\varphi G_0(R,\o ,0)$ is surjective. 
Recall that $Y_{j,R,\o}[0] \in {\rm Ker}\, \tilde{L}_{0,R,\o}$ ($ 1 \leq j \leq 7$) 
and  $\la H_{0,R,\o}[0], Y_{j,R,\o }[0] \ra_{L^2_{0,R,\o}} = 0$ 
($1 \leq j \leq 7$). 
Therefore, setting $\psi_j := Y_{j,R,\o}[0] \in C^\infty(\T)$ ($1 \leq j \leq 7$), 
we have $D_\varphi G_0(P,R,\o,0) [\psi_j] = Y_{j,R,\o}[0]$. 
Thus to prove that $D_\varphi G_0(P,R,\o ,0)$ is surjective, 
it is sufficient to show that the restricted map 
	\begin{equation}\label{eq:G0}
		D_\varphi G_0(P,R,\o ,0) : C^{5,\a}(\T) \cap 
		 \left(\mathcal{K}_{0,R,\o}^{\perp \bar{g}_{0,R,\o}} 
		\oplus \mathrm{span}\, \{ H_{0,R,\o} [0] \} \right)
		\to C^{1,\a}(\T) \cap 
		\left( \mathcal{K}_{0,R,\o}^{\perp \bar{g}_{0,R,\o}} 
		\oplus \mathrm{span}\, \{ H_{0,R,\o} [0] \}\right) 
	\end{equation}
is surjective. 
For $\psi \in \mathcal{K}_{0,R,\o}^{\perp \bar{g}_{0,R,\o}}$, 
it follows that $D_\varphi G_0(P,R,\o ,0)[\psi] 
= \Pi^0_{0,R,\o} \tilde{L}_{0,R,\o} \psi $. 
Hence, we have 
	\begin{equation}\label{eq:res-G0}
		D_\varphi G_0(P,R,\o ,0)\big|_{\mathcal{K}_{0,R,\o}^{\perp \bar{g}_{0,R,\o}}} 
		= \Pi^0_{0,R,\o} \tilde{L}_{0,R,\o} : 
		C^{5,\a}(\T) \cap 
		\left(\mathcal{K}_{0,R,\o}^{\perp \bar{g}_{0,R,\o}} \right) 
		\to C^{1,\a}(\T) \cap 
		\left(\mathcal{K}_{0,R,\o}^{\perp \bar{g}_{0,R,\o}} \right)
	\end{equation}
and shall show that the map in \eqref{eq:res-G0} is surjective. 
If this claim holds true, then the map in \eqref{eq:G0} is surjective. 
Indeed, since 
	\[
		\left(\mathrm{Id} - \Pi^0_{0,R,\o} \right) 
		\left(D_\varphi G_0(P,R,\o ,0) \left[ H_{0,R,\o}[0] \right]\right) 
		=  \| H_{0,R,\o}[0] \|_{L^2_{0,R,\o}}^2 H_{0,R,\o}[0] \neq 0,
	\]
combining with the surjectivity of the map in \eqref{eq:res-G0}, 
we may observe that the map in \eqref{eq:G0} is surjective.

		Now we show that the map in \eqref{eq:res-G0} is surjective. 
For this purpose, denote by $i_{0,R,\o}$ the inclusion map 
$C^{5,\a}(\T) \cap \left(\mathcal{K}_{0,R,\o}^{\perp \bar{g}_{0,R,\o}} 
\oplus \mathrm{span}\, \{ H_{0,R,\o} [0] \} \right) \subset C^{5,\a}(\T)$. 
Then the map $\Pi^0_{0,R,\o} \tilde{L}_{0,R,\o}$ is decomposed as 
follows: $\Pi^0_{0,R,\o} \tilde{L}_{0,R,\o} 
= \Pi^0_{0,R,\o} \circ \tilde{L}_{0,R,\o} \circ i_{0,R,\o}$. 
Notice that maps $i_{0,R,\o}$ and $\Pi^0_{0,R,\o}: C^{1,\a}(\T) 
\to \mathcal{K}_{0,R,\o}^{\perp \bar{g}_{0,R,\o}}$ 
are the Fredholm operators with indices $-8$ and $8$. 
Moreover, using the elliptic regularity and the Fredholm alternative, 
we also observe that 
$\tilde{L}_{0,R,\o} : C^{5,\a}(\T) \to C^{1,\a}(\T)$ is 
the Fredholm operator with index $0$. 
Hence, $\Pi^0_{0,R,\o} \tilde{L}_{0,R,\o} 
= \Pi^0_{0,R,\o} \circ \tilde{L}_{0,R,\o} \circ i_{0,R,\o}$ 
is the Fredholm operator with index $0$. 
Since we already know that $\Pi^0_{0,R,\o} \circ \tilde{L}_{0,R,\o}$ 
is injective, one may observe that 
$\Pi^0_{0,R,\o} \tilde{L}_{0,R,\o}$ is surjective and 
we conclude that $D_\varphi G_0(P,R,\o,0)$ is invertible.

	From \eqref{eq:conv-Ge}, for some $\e _{K,2}\in (0,\e_{K,1,1}]$, 
the map $D_\varphi G_\e(P,R,\o ,0)$ is invertible 
provided $0 \leq \e \leq \e_{K,2}$. 
Thus by the inverse mapping theorem, 
we may find neighbourhoods  $U_{1,\e ,P,R,\o} \subset C^{5,\a}(\T)$  and 
$U_{2,\e ,P,R,\o} \subset C^{1,\a}(\T)$ of $0$ and $G_\e (P,R,\o ,0)$ such that 
$G_\e (P,R,\o ,\cdot) : U_{1,\e ,P,R,\o} \to U_{2,\e ,P,R,\o}$ is diffeomorphism. 
Noting \eqref{eq:conv-Ge} and a proof of the inverse mapping theorem 
(for instance, see Lang \cite[Theorem 3.1 in Chapter XVIII]{LANG}), 
one may find $r_{K,2} \in (0, r_{K,1,1} ]$ so that 
	\[
		\overline{B_{r_{K,2}, C^{5,\a}(\T) }(0)} \subset U_{1,\e ,P,R,\o},
		\quad 
		\overline{B_{2r_{K,2}, C^{1,\a}(\T) }(G_\e (P,R,\o ,0))} \subset U_{2,\e ,P,R,\o}
	\]
for all $(\e ,P,R,\o) \in [0,\e_{K,2}] \times M \times SO(3) \times K$. 
Since $\| G_\e (P,R,\o ,0) \|_{C^{1,\a}} \leq C_K \e^2$ holds 
due to Lemma \ref{l:appsol}, shrink $\e _{K,2}>0$ if necessary, 
we have 
	\[
		\overline{B_{r_{K,2}, C^{1,\a}(\T) }(0)} 
		\subset \overline{B_{2r_{K,2}, C^{1,\a}(\T) }(G_\e (P,R,\o ,0))} 
		\quad \text{for all $\e \in [0,\e _{K,2}]$},
	\]
which means that one may find  a $\varphi_\e (P,R,\o)$ uniquely in 
$U_{1,\e ,P,R,\o}$ such that $G_\e (P,R,\o ,\varphi_\e (P,R,\o)) = 0$. 
Moreover, from $\| G_\e (P,R,\o ,0) \|_{C^{1,\a}} \leq C_K \e^2$ it follows that 
$\| \varphi_\e (P,R,\o) \|_{C^{5,\a}(\T)} \leq C_K \e^2$. 
Thus assertions $j)$, $jj)$, $jjj)$, $jjjj)$ hold.

		The smoothness of $\varphi_\e (P,R,\o)$ in $(P,R,\o)$ 
follows from the smoothness of $G_{\e}( P,R,\o,\varphi)$, 
$jjjj)$, the fact that $D_\varphi G_{\e} (P,R,\o ,\varphi_\e (P,R,\o))$ is 
invertible and the implicit function theorem. 
So we only need to prove the estimates for 
$D_{P,R,\o} \varphi_\e $ and $D_{P,R,\o}^2 \varphi_\e $. 
We differentiate $G_\e (P,R,\o , \varphi_\e (P,R,\o)) = 0$ in 
$(P,R,\o)$ to obtain 
	\begin{equation}\label{eq:first-deri}
			0 = (D_{P,R,\o} G_\e)(P,R,\o ,\varphi_\e ) 
			+ (D_\varphi G_\e) (P,R,\o ,\varphi_\e) 
			[ D_{P,R,\o} \varphi_\e ].
	\end{equation}
Noting $G_0(P,R,\o ,0) = 0$ for all $(P,R,\o) \in M\times SO(3) \times K$, 
it follows that 
	\[
		D_{P,R,\o} G_0(R,\o ,0) = 0 = D_{P,R,\o}^2 G_0(R,\o,0). 
	\]
Thus by \eqref{eq:conv-Ge}, $jjjj$), \eqref{eq:first-deri} and the facts 
$D_\varphi G_\e (P,R,\o ,\varphi_\e): C^{5,\a}(\T) \to C^{1,\a}(\T)$ is invertible 
and the norm of its inverse is uniformly bounded with respect to $\e ,P,R,\o $, 
we have 
	\[
		\left\| D_{P,R,\o} \varphi_\e \right\|_{C^{5,\a}(\T)} 
		\leq \left\| (D_\varphi G_\e (P,R,\o ,\varphi_\e))^{-1} 
		\right\|_{\mathcal{L} (C^{1,\a }(\T), C^{5,\a }(\T)  ) } 
		\left\| 
		 D_{P,R,\o} G_\e (P,R,\o ,\varphi_\e)  \right\|_{C^{1,\a}(\T)} 
		\leq C_K \e^2. 
	\]
We differentiate \eqref{eq:first-deri} and get 
	\[
		\begin{aligned}
			0&= (D_{P,R,\o}^2 G_\e) (P,R,\o ,\varphi_\e) 
			+ 2 (D_{P,R,\o} D_\varphi G_\e )(P,R,\o ,\varphi_\e) 
			[D_{P,R,\o} \varphi_\e] 
			\\
			& \quad + (D_\varphi^2 G_\e) (P,R,\o ,\varphi_\e ) 
			[ (D_{P,R,\o} \varphi_\e )^2 ] 
			+ (D_\varphi G_\e) (P,R,\o ,\varphi_\e) 
			[ D_{P,R,\o}^2 \varphi_\e  ]. 
		\end{aligned}
	\]
Using the fact $\| D_{P,R,\o} \varphi_\e \|_{C^{5,\a}(\T)} \leq C_K \e^2$, 
in a similar way to above, we also have 
$\| D_{P,R,\o}^2 \varphi_\e \|_{C^{5,\a}(\T)} \leq C_K \e^2$. 
Thus we complete the proof. 
\end{proof}

\
%
%\begin{rem}\label{r:smallderphi}
%In addition to $jjj)$, one can also prove the smallness of  the $C^{4,\a}$ norm 
%of its derivatives with respect to the parameters $P, R$ and $\o$. For doing this it is 
%sufficient to differentiate (with respect to these parameters)  conditions $j), jj)$ and 
%$\var \in (C^{4,\a})^{\perp g_{\e}}_{P,R,\o} \oplus {\rm span}\, \{ H_{\e,P,R,\o}[0] \}$ 
%(see the formulas after \eqref{eq:claim}). 
%The former condition gives a bound on the component of 
%$\var_{\e}(P,R,\o)$ orthogonal to ${\cal K}^0_{\e,P,R,\o}$, 
%while the latter two give a bound on 
%the  ${\cal K}^0_{\e,P,R,\o}$ component. This kind of estimates is rather standard (after the non-degeneracy 
%condition is checked and the $C^{4,\a}$ bound on 
%$\var_{\e}(P,R,\o)$ is obtained) and can also be 
%found in Lemma 2.11 of \cite{am2}. 
%\end{rem}

\

\noindent We can finally encode the variational structure of the problem by means of the following result.

\begin{pro}\label{p:variational}
Let $K\subset \subset \DD$, $\e_{K,2}$ and $\varphi_{\e}(P,R,\o)$ 
be as in Proposition \ref{p:lyap}, and for $\e \in (0, \e_{K,2}]$ 
define the function $\Phi_\e : \mathcal{T}_{K , \e} \to \R$ by
\begin{equation}\label{eq:defPhie}  
  \Phi_\e(P,R,\o) := W_{g_\e}(\Sigma_{\e,P,R,\omega} [\var_{\e}(P,R,\omega)]), 
\end{equation}
where $\Sigma_{\e,P,R,\omega} [\var_{\e}(P,R,\omega)]$ was defined in \eqref{eq:defSw}.
Then there exists  $\e_{K,3} \in (0,\e_{K,2}]$ such that 
if $\e \in (0,\e_{K,3}]$ and 
$(P_\e, R_\e , \o _\e ) \in M \times SO(3) \times K$ is a critical point of $\Phi_\e$, 
then  $\Sigma_{\e,P_\e,R_\e,\omega_\e} [\var_{\e}(P_\e,R_\e, \omega_\e)]$ 
satisfies the area-constrained Willmore equation. 
\end{pro}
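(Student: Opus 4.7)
The plan is to show that at a critical point $(P_\e,R_\e,\o_\e)$ of $\Phi_\e$ the Lagrange multipliers $\beta_1,\dots,\beta_7$ produced by Proposition \ref{p:lyap}~$j)$ all vanish; since $\beta_0 H$ is precisely the natural multiplier for the area constraint, the identity $j)$ then reduces to $W'_{g_\e}(\Sigma)=\beta_0 H$, which is the area-constrained Willmore equation.

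First I would differentiate $\Phi_\e$ at $(P_\e,R_\e,\o_\e)$ in a generic direction $\xi$ tangent to $M\times SO(3)\times K$. Writing $v_\xi$ for the normal velocity on $\Sigma := \Sigma_{\e,P_\e,R_\e,\o_\e}[\varphi_\e]$ induced by $\xi$ through the parametrization and the chain rule, the first variation formula for $W_{g_\e}$ combined with Proposition \ref{p:lyap}~$j)$ yields
\[
 d\Phi_\e[\xi] \;=\; \beta_0\,\langle H_{\e,P_\e,R_\e,\o_\e}[\varphi_\e], v_\xi\rangle_{L^2_{\e,P_\e,R_\e,\o_\e,\varphi_\e}} \;+\; \sum_{i=1}^{7}\beta_i\,\langle Z_{i,R_\e,\o_\e}, v_\xi\rangle_{L^2_{\e,P_\e,R_\e,\o_\e,\varphi_\e}}.
\]
Since Proposition \ref{p:lyap}~$jj)$ asserts that the area is constantly $4\sqrt{2}\pi^2$ along the whole family $\{\Sigma_{\e,P,R,\o}[\varphi_\e(P,R,\o)]\}$, its first variation in any direction $\xi$ vanishes; by the classical formula $dA/d\xi=\int H\,v_\xi\,d\sigma$, this kills the $\beta_0$-term.

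Next I would test the resulting identity against seven directions $\xi_1,\dots,\xi_7$ in parameter space that, at the Euclidean level $\e=0$, $\varphi=0$, induce exactly the Jacobi fields $Z_{1,R,\o},\dots,Z_{7,R,\o}$: three translations of $P$, three rotations of $R$ and the Möbius direction encoded by $\o$. In the flat case $v_{\xi_j}|_{\e=0}=Z_{j,R,\o}$, so the $7\times 7$ matrix $M_0:=(\langle Z_i,Z_j\rangle_{L^2_{0,R,\o}})$ is the Gram matrix of $Z_1,\dots,Z_7$, which is positive definite because these fields are linearly independent by Weiner's classification of $\mathrm{Ker}\,\tilde L_{0,R,\o}$ used in Proposition \ref{p:nondeg}. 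The estimates \eqref{eq:diff-metric}, \eqref{eq:esti-Ze-Z0}, Lemma \ref{l:prop-Y}, together with the bound $\|\varphi_\e\|_{C^{5,\a}}+\|D_{P,R,\o}\varphi_\e\|_{C^{5,\a}}\le C_K\e^2$ of Proposition \ref{p:lyap}, give $v_{\xi_j}=Z_{j,R,\o}+O(\e^2)$ in $L^2$, uniformly over $(P,R,\o)\in M\times SO(3)\times K$. Hence the Gram matrix $M_\e$ at level $\e$ is an $O(\e^2)$-perturbation of $M_0$ and remains invertible once $\e\le\e_{K,3}$ for some $\e_{K,3}\in(0,\e_{K,2}]$. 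At a critical point $d\Phi_\e[\xi_j]=0$ for every $j$, so the vector $\beta:=(\beta_1,\dots,\beta_7)^{\top}$ solves the linear system $M_\e\beta=0$ and is therefore zero, which proves the proposition.

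The main obstacle is the well-known symmetry degeneracy at $\o=0$, where the symmetric Clifford torus is rotationally invariant about one axis: one rotation generator produces a trivial Jacobi field and $(R,\o)\mapsto R\T_\o$ fails to be locally injective. As anticipated in Remark \ref{r:nondeg} and Remark \ref{r:param}, this is handled by replacing the offending rotation direction, near $\o=0$, with the two Möbius directions $\partial T_\o/\partial x,\partial T_\o/\partial y$, and invoking the Bott-type global non-degeneracy of the family of inverted tori; with this adjusted choice of $\xi_1,\dots,\xi_7$ the Gram-matrix argument above produces an invertible $M_\e$ uniformly over the whole parameter manifold, and the conclusion follows without change.
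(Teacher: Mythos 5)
Your proposal is correct and follows essentially the same route as the paper's own proof: you kill the $\beta_0$-multiplier by differentiating the area identity $jj)$, show that the normal velocities induced by the parameter directions equal $Z_{1,R,\o},\dots,Z_{7,R,\o}$ up to $O(\e^2)$ (the paper does this via the expansion of the exponential map and Lemma \ref{l:prop-Y}), and conclude $\beta_1=\dots=\beta_7=0$ from the invertibility of the perturbed Gram matrix $A_{\e,P,R,\o}$. Your explicit treatment of the degeneracy at $\o=0$ via the modified basis of Remark \ref{r:nondeg} is a point the paper only handles implicitly, but it is the same device, so there is no substantive difference.
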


\begin{proof}
First of all, a criticality of $\Phi_\e $ in $P$ is independent 
of choices of the scales of $M$, namely $g$ and $g_\e $. 
Hence, in this proof, we adapt the rescaled metric $g_\e $ 
for the differential in $P$.

		Let $P_0 \in M$ and $(U_0,\Phi_0)$ be a normal coordinate 
centred at $P_0$ of $(M,g_\e)$. For $P \in U_0$ with $z = \Phi_0(P)$, 
the position vector for $\Sigma_{\e ,P,R,\o}[\varphi]$ in $(U_0,\Phi_0)$ 
is expressed as 
	\[
		\widetilde{\mathcal Z}_{\e ,P,R,\o}[\varphi] (p) 
		= \mathcal{X}_\e \left( 1; z , T_\e (z) (R T_\o ( \mathcal{Z}_{\e ,P,R,\o}[\varphi] (p) ) 
		) \right)
	\]
where $T_\e (z) : \R^3 \to \R^3$ is a linear transformation 
with $T_\e(0) = \mathrm{Id}$ and $\mathcal{X}_\e (t;z,v)$ a solution of 
	\[
		\frac{d^2 \mathcal{X}_\e ^\a}{d t^2} 
		+ \Gamma^\a _{\e ,\beta \gamma} ( \mathcal{X}_\e ) 
		\frac{d \mathcal{X}_\e ^\b}{d t} 
		\frac{d \mathcal{X}_\e ^\gamma}{d t} = 0, \qquad 
		\left( \mathcal{X}_\e (0;z,v) , \ \frac{d \mathcal{X}_\e}{dt}(0;z,v) \right) 
		= (z, v) \in \R^6,
	\]
and $\Gamma^\alpha_{\e ,\beta \gamma} $ stands for the Christoffel symbol 
of $(M,g_\e)$ in $(U_0,\Phi_0)$. Noting \eqref{eq:ge=d+eh}, 
one may find that 
	\begin{equation}\label{eq:esti-Xe}
		\begin{aligned}
			&\| D_z^{k_1+1} T_\e (z) \|_{L^\infty} \leq C_{k_1} \e^2, \quad 
			\| \Gamma_{\e ,\beta \gamma}^\alpha \|_{C^{k_2}} 
			\leq C_{k_2} \e^2, \\
			&
			\mathcal{X}_\e (1;z,v) = z + v + \widetilde{R}_{\e ,1}(z,v), \quad 
			\| D_z^{k_1} D_v^{k_2} \widetilde{R}_{\e ,1}(z,v) \|_{L^\infty} \leq 
			C_{k_1,k_2} \e^2
		\end{aligned}
	\end{equation}
for all $k_1,k_2 \geq 0$. Hence, 
	\[
		\widetilde{\mathcal Z}_{\e ,P,R,\o}[\varphi] (p) 
		= z + R T_\o \left( \mathcal{Z}_{\e ,P,R,\o}[\varphi](p) \right) 
		+ \widetilde{R}_{\e ,2} \left(
		z, R T_\o \left( \mathcal{Z}_{\e ,P,R,\o}[\varphi](p) \right) 
		\right)
	\]
where $\widetilde{R}_{\e ,2}(z,v)$ satisfies the same estimate to 
$\widetilde{R}_{\e ,1}(z,v)$.

		Next, denote by 
$\widetilde{Z}_{0,R,\o} (p) := R T_\o (\mathcal{Z}_{0,R,\o}[0] (p) )$ and 
$\widetilde{n}_{0,R,\o} (p)$ the position vector and 
the unit outer normal of $R \T_\o $ in $(\R^3,g_0)$. 
We remark that the differential in $z$ corresponds 
to that in $P$ and 
	\[
		Z_{m,R,\o}(p) = g_0 \left[ D_{z^m} \left( z + \widetilde{Z}_{0,R,\o}(p) \right), 
		\widetilde{n}_{0,R,\o}(p)  \right], \quad 
		Z_{j,R,\o} (p) = g_0 
		\left[ D_{R,\o} \left( z + \widetilde{Z}_{0,R,\o}(p)  \right), 
		 \widetilde{n}_{0,R,\o}(p)  \right]
	\]
for $1 \leq m \leq 3$ and $ 4 \leq j \leq 7$. 
Recalling \eqref{eq:diff-normal}, \eqref{eq:esti-Ze-Z0}, \eqref{eq:esti-Xe} and 
Proposition \ref{p:lyap}, we have 
	\[
		\begin{aligned}
			&\widetilde{n}_{\e ,P,R,\o} [\varphi_\e] - \widetilde{n}_{0,R,\o} 
			= O_{C^{4,\a}}(\e^2), \quad 
			D_{z^m} \widetilde{\mathcal Z}_{\e ,P,R,\o}[\varphi_\e] (p) 
			= \mathbf{e}_m + O_{C^{4,\a}}(\e^2), 
			\\
			& D_{R,\o} 
			\left( \widetilde{\mathcal Z}_{\e ,P,R,\o}[\varphi_\e] (p) 
			- \widetilde{\mathcal Z}_{0,R,\o} (p)  \right) 
			= O_{C^{4,\a}} (\e^2)
		\end{aligned}
	\]
where $\widetilde{n}_{\e ,P,R,\o}[\varphi_\e]$ stands for 
the outer unit normal of $\Sigma_{\e ,P,R,\o}[\varphi_\e]$ 
in $(U_0,\Phi_0)$ and $\| O_{C^{4,\a}}(\e^2) \|_{C^{4,\a}} \leq C_K \e^2$. 
Thus putting 
	\[
		\begin{aligned}
			\psi_{\e ,P,R,\o ,m}(p) &:= 
			g_{\e,P_0} \left( \widetilde{\mathcal Z}_{\e ,P,R,\o}[\varphi_\e] \right)
			\left[ D_{z^m} \widetilde{\mathcal Z}_{\e ,P,R,\o}[\varphi_\e], 
			\widetilde{n}_{\e ,P,R,\o} [\varphi_\e] \right], 
			\\
			\psi_{\e ,P,R,\o ,j}(p) &:= 
			g_{\e,P_0} \left( \widetilde{\mathcal Z}_{\e ,P,R,\o}[\varphi_\e] \right) 
			\left[ D_{R,\o} \widetilde{\mathcal Z}_{\e ,P,R,\o}[\varphi_\e], 
			\widetilde{n}_{\e ,P,R,\o} [\varphi_\e] \right]
		\end{aligned}
	\]
for $1 \leq m \leq 3$ and $4 \leq j \leq 7$, we observe that 
	\[
		\| \psi_{\e ,P,R,\o ,i} - Z_{i,R,\o} \|_{C^{4,\a}(\T)} \leq C_K \e^2
	\]
for all $ 1 \leq i \leq 7$.

		Now we differentiate $jj$) in Proposition \ref{p:lyap} by $(z,R,\o)$ to get 
	\[
		0 = \left\la H_{\e ,P,R,\o}[\varphi_\e] , \psi_{\e ,P,R,\o,i} 
		\right\ra_{L^2_{\e ,P,R,\o,\varphi_\e}}.
	\]
Next we differentiate $\Phi_\e (P,R,\o)$. Using $j$) and $jjjj$), 
one sees that 
	\begin{equation}\label{eq:dPhie}
		\begin{aligned}
			D_{z,R,\o} (\Phi_\e (P,R,\o)) 
			&= \la W_{g_\e}'(\Sigma_{\e ,P,R,\o}) [\varphi_\e] , \psi_{\e ,P,R,\o ,i} 
			\ra_{L^2_{\e ,P,R,\o}} 
			= \sum_{k=1}^7 \beta_k \la Z_{k,R,\o}, \psi_{\e ,P,R,\o ,i} 
			\ra_{L^2_{\e ,P,R,\o ,\varphi_\e}}
			\\
			&= \sum_{k=1}^7 
			\beta_k \left( \la Z_{k,R,\o}, Z_{i,R,\o} \ra_{L^2_{0,R,\o}} 
			+ O_K(\e^2) \right) 
			=: \sum_{k=1}^7 \beta_k A_{\e ,P,R,\o,ik}
		\end{aligned}
	\end{equation}
Since $Z_{1,R,\o},\ldots,Z_{7,R,\o}$ are linearly independent, 
a matrix $A_{0,R,\o}$ defined by 
$A_{0,R,\o,ik} := \la Z_{k,R,\o}, Z_{i,R,\o} \ra_{L^2_{0,R,\o}}$ 
is invertible. Thus we may find $\e_{K,3} \in (0, \e_{K,2}]$ such that 
$A_{\e ,P,R,\o}$ is also invertible provided $0< \e \leq \e_{K,3}$.

		Let $(P_\e ,R_\e ,\o_\e)$ be a critical point of $\Phi_\e $. 
Then by \eqref{eq:dPhie}, one observes that 
	\[
		0 = \sum_{k=1}^7 A_{\e ,P_\e ,R_\e ,\o_\e,ik} 
		\beta_{k} \qquad (1 \leq i \leq 7).
	\]
Since $A_{\e ,P_\e ,R_\e ,\o_\e}$ is invertible, we obtain 
$0= \beta_1 = \cdots = \beta_7$. Hence, Proposition \ref{p:variational} holds. 
\end{proof}

\

\noindent Given Proposition \ref{p:variational}, 
it is useful to understand the dependence of $\Phi_\e$ upon $\Sigma$. 
The next result gives a useful Taylor expansion of this quantity in terms of $\e$.

\begin{pro}\label{p:variational2}
Let $K \subset \DD$ be a compact set. 
Then there exists a $C_K > 0$ such that 
	\[
		\left| \Phi_\e (P,R,\o) - W_{g_\e}(\Sigma_{\e , P, R, \o}[0] ) \right| 
		\leq C_K \e ^4
	\]
holds for every $(\e,P,R,\o) \in (0,\e_{K,3}] \times M \times SO(3) \times K$. 
\end{pro}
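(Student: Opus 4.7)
The plan is to perform a second-order Taylor expansion of $W_{g_\e}$ along the one-parameter family of normal perturbations $t\mapsto \Sigma_{\e,P,R,\o}[t\varphi_\e]$, $t\in[0,1]$, interpolating between $\Sigma_{\e,P,R,\o}[0]$ and $\Sigma_{\e,P,R,\o}[\varphi_\e]$. By the fundamental theorem of calculus applied twice (with an integration by parts in $t$), one obtains
\[
\Phi_\e(P,R,\o)-W_{g_\e}\!\left(\Sigma_{\e,P,R,\o}[0]\right)
= W'_{g_\e}\!\left(\Sigma_{\e,P,R,\o}[0]\right)[\varphi_\e]
+\int_0^1 (1-t)\, W''_{g_\e}\!\left(\Sigma_{\e,P,R,\o}[t\varphi_\e]\right)[\varphi_\e,\varphi_\e]\, dt.
\]
It therefore suffices to show that both terms on the right-hand side are $O(\e^4)$ uniformly in $(P,R,\o)\in M\times SO(3)\times K$.

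For the \emph{linear term}, Lemma \ref{l:appsol} gives $\|W'_{g_\e}(\Sigma_{\e,P,R,\o}[0])\|_{C^0(\T)}\leq C_K\e^2$, while Proposition \ref{p:lyap} yields $\|\varphi_\e\|_{C^{5,\a}(\T)}\leq \bar C_K\e^2$. Since the area form on $\T$ induced by $\bar g_{\e,P,R,\o}[0]$ is uniformly bounded thanks to the expansion \eqref{eq:ge=d+eh}--\eqref{eq:esti-he}, the pairing (computed via Proposition \ref{p:1-2-var}) estimates to
\[
\left| W'_{g_\e}\!\left(\Sigma_{\e,P,R,\o}[0]\right)[\varphi_\e]\right|
\leq \|W'_{g_\e}(\Sigma_{\e,P,R,\o}[0])\|_{C^0}\,\|\varphi_\e\|_{C^0}\,|\T|_{\bar g_{\e,P,R,\o}[0]}\leq C_K\e^4.
\]

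For the \emph{quadratic remainder}, one uses the explicit form of $W''_{g_\e}$ given in Proposition \ref{p:1-2-var}: all coefficients appearing there (namely the mean curvature $H$, the second fundamental form $A$, its trace-free part $\mathring A$, the Ricci and Riemann tensors pulled back via exponential coordinates, the associated one-form $\varpi$ and the tensor $T$) are uniformly bounded in $C^k(\T)$ along the path $t\in[0,1]$. Indeed, the ambient metric satisfies $g_{\e,P,\alpha\beta}=\delta_{\alpha\beta}+O(\e^2)$ on the relevant ball, the normal perturbation $t\varphi_\e$ has $C^{5,\a}$-norm at most $\bar C_K\e^2$, and the geometric quantities depend smoothly on these data (by the estimates collected in Lemma \ref{l:prop-Y} and the proof of Lemma \ref{l:appsol}). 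Consequently $W''_{g_\e}(\Sigma_{\e,P,R,\o}[t\varphi_\e])$ is a continuous bilinear form on $C^{4,\a}(\T)$ whose operator norm is bounded uniformly in $(\e,P,R,\o,t)$, and therefore
\[
\left|W''_{g_\e}\!\left(\Sigma_{\e,P,R,\o}[t\varphi_\e]\right)[\varphi_\e,\varphi_\e]\right|
\leq C_K\,\|\varphi_\e\|_{C^{4,\a}(\T)}^2\leq C_K\e^4.
\]
Combining the two bounds and integrating in $t$ gives the claim. The only real (minor) obstacle is ensuring that the constants in the bound on $W''$ do not degenerate as the parameters vary; this is precisely what the uniform estimates \eqref{eq:diff-ge-g0}--\eqref{eq:diff-geo-qu} together with Lemma \ref{l:prop-Y} provide.
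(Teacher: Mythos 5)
Your proposal is correct and follows essentially the same route as the paper: a second-order Taylor expansion of $W_{g_\e}$ in the perturbation $\var_\e$, with the linear term controlled by combining Lemma \ref{l:appsol} ($\|W_{g_\e}'(\Sigma_{\e,P,R,\o}[0])\|\leq C_K\e^2$) with the bound $\|\var_\e\|\leq C_K\e^2$ from Proposition \ref{p:lyap}, and the quadratic remainder absorbed into $O(\|\var_\e\|^2_{C^{4,\a}(\T)})=O(\e^4)$. Your version merely makes explicit (via the integral form of the remainder and the uniform coefficient bounds for $W''_{g_\e}$) what the paper states as the $O(\|\var_\e\|^2)$ term.
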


\begin{proof}
By definition of $\Phi_\e$ as in \eqref{eq:defPhie} and 
by the Taylor expansion of the Willmore functional up to second order in the perturbation $\var_\e$, 
we get 
	\[
		\begin{aligned}
		&\Phi_\e(P,R,\o)
		\\
		=  
		&W_{g_\e} (\Sigma_{\e,P,R,\omega}[0]) + 
		\la W_{g_\e}'(\Sigma_{\e ,P,R,\o}[0]) , \varphi_{\e} \ra_{L^2_{\e ,P,R,\o ,0}}
		 + O(\|\var_{\e}(P,R,\omega) \|^2_{C^{4,\a}(\T)}).
		\end{aligned}
	\]
Observe that by Proposition \ref{p:lyap} we have 
$\|\var_{\e}(P,R,\omega) \|^2_{C^{4,\a}(\T)}\leq C_K \e^4$. 
Moreover,  since $\Sigma_{\e,P,R,\omega}[0]$ is an approximate solution 
in the sense of  Lemma \ref{l:appsol}, we have
	\[
		\left| \la W_{g_\e}'(\Sigma_{\e ,P,R,\o}[0]) , \varphi_{\e} 
		\ra_{L^2_{\e ,P,R,\o ,0}} \right| \leq C_K \e^4.
	\]
Thus proposition follows by combining the above estimates. 
\end{proof}

\section{Energy expansions} \label{Sec:Expansions}

The non-compactness of the M\"obius group might create in general difficulties in 
extremising the function $\Phi_\e$ (which would amount to solving our problem by 
Proposition \ref{p:variational}).  In this section we will expand the Willmore energy on 
exponential maps of symmetric tori (in the notation of the previous section these 
are denoted as $\Sigma_{\e, P, R,0}[0]$), and of tori which are degenerating to spheres 
($\Sigma_{\e, P, R,\o}[0]$ with $|\o|$ close to $1$). A comparison between these two energies, 
which involves some local curvature quantities, will indeed help to 
rule-out a blow-up behaviour.

\subsection{Willmore energy of symmetric tori}

Here we calculate the expansion of the Willmore functional at  small area symmetric
Clifford tori embedded in $(M,g)$ through the exponential map for each $ P \in M$.  
Before stating the next Proposition, 
we recall that $\T \subset \R^3$ stands for the symmetric Clifford torus 
with the axial vector $\mathbf{e}_z=(0,0,1)$. See the beginning of Subsection \ref{ss:invfixarea}.

\begin{pro}\label{p:exp-W-to}
	As $\e \to 0$, the following expansion holds:  		
			\begin{equation}\label{eq:ex-W-to}
				\sup_{(P,R) \in M \times SO(3)} 
				\left| W_{g_\e} (\Sigma_{\e ,P,R,0}[0]) - 8\pi^2 - 4 \sqrt{2} \pi^2 
				\left\{ {\rm Sc}_P - \Ric_P(R \mathbf{e}_z, R \mathbf{e}_z) \right\} \e^2 \right|
				\leq C_0 \e^3.
%				W_{g_\e}(\Sigma_{\e, P, R, 0}[0] )
%				= 8 \pi^{2} - 4 \sqrt{2} \pi^{2} 
%				\left\{ {\rm Sc}_P - \Ric_P(R \mathbf{e}_z, R \mathbf{e}_z)
%				\right\} \e^{2} + O(\e^{3}), 
			\end{equation}
%	where $O(\e^3)$ is uniform with respect to $(P,R) \in M\times SO(3)$. 
\end{pro}
\

\noindent The proof of the above proposition relies on two lemmas below. First of all, we
linearise (with respect to $g_\e$) the Willmore energy of $\T$ at the Euclidean metric.

\begin{lem}\label{l:deri}
Let $\Sigma \subset \R^{3}$ be an embedded 
smooth closed surface and 
$(g_{t})_{\alpha\beta} := \delta_{\alpha \beta} 
+ t h_{\alpha \beta}$ a perturbation of 
the Euclidean metric $g_0$. With a slight abuse of notation we will denote by $g_t$ both the metric on $M$ 
and its restriction to $\Sigma$. Denote the mean curvature of $(\Sigma, g_{t})$ 
by $H(t)$ and set 
	\[
		W(t) := \int_{\Sigma} H(t)^{2} 
		d \sigma_{t}
	\]
where $ d \sigma_{t}$ is the volume element of 
$(\Sigma,g_{t})$. Then as $t \to 0$, 
the following expansion holds: 
$$
  W(t) = W(0) + t \dot{W}(0) + O(t^{2})
$$
where 
	\begin{equation}\label{eq:W-dot-0}
		\begin{aligned}
			\dot{W}(0) =& 
			\frac{1}{2} \int_{\Sigma} 
			\biggl[  2 \sum_{i=1}^{2} 
			\left\la \frac{\partial h}{\partial n} e_{i}, e_{i} \right\ra H(0) 
			- 4 \sum_{i=1}^{2} e_{i}(h_{ni}) H(0) 
			+ 4 \sum_{i,j=1}^{2} h_{nj} 
			\left\la \n_{e_{i}} e_{i}, e_{j} \right\ra H(0) 
			\\
			& \qquad \quad 
			- 2h_{nn} H^{2}(0) + H^{2}(0) 
			{\rm tr} \left( h_{|T_{p}\Sigma} \right) 
			\biggr] d\sigma_{0}(p).
		\end{aligned}
	\end{equation}
Here $\{e_{1}, e_{2}\}$ is an orthonormal basis of 
$T_{p}\Sigma$, $n$ the unit outer normal, 
$h_{ni} := \la h n, e_{i} \ra$, $h_{nn} := \la h \, n, n \ra$, 
and $h_{|T_{p}\Sigma}$ is the induced $(2,0)$-tensor on $\Sigma$.  
\end{lem}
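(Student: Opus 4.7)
My plan is to carry out a direct first-order expansion in $t$ of each geometric ingredient of $W(t)=\int_\Sigma H(t)^2\,d\sigma_t$ and to assemble them via the Leibniz rule. The ingredients to linearise are: the induced metric $\bar{g}_t$ on $\Sigma$, its volume element $d\sigma_t$, the $g_t$-unit outer normal $n_t$, the ambient Levi-Civita connection $\nabla^{g_t}$, the second fundamental form $A_t(X,Y)=g_t(\nabla^{g_t}_X n_t,Y)$, and its trace $H_t=\bar{g}_t^{ij}A_t(e_i,e_j)$. I would work in Cartesian coordinates on $\R^3$ (so the Christoffel symbols of $g_0$ vanish identically) and fix a parametrization $X(u^1,u^2)$ of $\Sigma$ whose coordinate frame is orthonormal at the base point $p$ and coincides there with $\{e_1,e_2\}$; since all claimed terms are pointwise identities, this costs no generality.

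The computation then splits into self-contained pieces. First, from $\bar{g}_{t,ij}=\bar{g}_{0,ij}+t\,h(\partial_i X,\partial_j X)$ one gets $\frac{d}{dt}d\sigma_t\big|_{t=0}=\tfrac12\operatorname{tr}(h|_{T_p\Sigma})\,d\sigma_0$, which multiplied by $H(0)^2$ gives the last term of \eqref{eq:W-dot-0}. Second, differentiating the defining conditions $g_t(n_t,\partial_i X)=0$ and $g_t(n_t,n_t)=1$ at $t=0$ yields $\dot n(0)=-\sum_{i=1}^2 h_{ni}\,e_i-\tfrac12 h_{nn}\,n$. Third, the Koszul formula applied in Cartesian coordinates gives the Christoffel variation $\dot\Gamma^k_{ij}(0)=\tfrac12\bigl(\partial_i h^k{}_{j}+\partial_j h^k{}_{i}-\partial^k h_{ij}\bigr)$. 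Combining these through
$\dot A(0)(e_i,e_j)=h(\nabla^{g_0}_{e_i}n,e_j)+\la \dot\Gamma(e_i,n),e_j\ra+\la \nabla^{g_0}_{e_i}\dot n(0),e_j\ra,$
taking the trace with $\bar{g}_0^{ij}$ together with the correction $\dot{\bar g}^{ij}(0)A_{ij}(0)=-h(e_i,e_j)A_{ij}$, and integrating the expansion $2H(0)\dot H(0)\,d\sigma_0+H(0)^2\,\dot{d\sigma}(0)$ over $\Sigma$ should produce \eqref{eq:W-dot-0}.

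The main obstacle is bookkeeping in the trace step. The Koszul variation contributes several derivatives of $h$, while the term $\nabla^{g_0}_{e_i}\dot n(0)$ generates both a normal derivative $\la (\partial h/\partial n)e_i,e_i\ra H$ and tangential derivatives $e_i(h_{ni})H$, together with frame-connection pieces $h_{nj}\la\nabla_{e_i}e_i,e_j\ra H$ arising because the orthonormal frame is not parallel along $\Sigma$. One has to verify that the purely tangential derivatives of $h_{ij}|_{T\Sigma}$ coming from $\dot\Gamma(e_i,n)$ cancel against contributions from $\dot{\bar g}^{ij}A_{ij}$, so that only the combinations appearing in \eqref{eq:W-dot-0} survive. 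Choosing $\{e_1,e_2\}$ geodesic at $p$ simplifies almost everything to an algebraic identity at the point; the frame-connection terms in the statement are then precisely what is needed to restore the covariant form of the answer for an arbitrary frame.
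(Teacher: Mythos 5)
Your proposal is correct and follows essentially the same route as the paper: a direct linearisation in $t$ of the volume element, the unit normal (giving $\dot n(0)=-\sum_i h_{ni}e_i-\tfrac12 h_{nn}n$), the Christoffel symbols via the Koszul formula, and the second fundamental form, assembled through $\dot W(0)=\int_\Sigma\bigl(2H\dot H\,d\sigma_0+H^2\,\dot{d\sigma}\bigr)$; the only difference is bookkeeping, since you trace with the fixed frame and the variation $\dot{\bar g}^{ij}(0)$ of the inverse induced metric, whereas the paper lets the orthonormal frame $e_{i,t}$ evolve with $g_t$ and computes $\dot e_{i,0}$ instead — the two devices are equivalent. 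One small inaccuracy in your narrative (harmless, since your displayed decomposition of $\dot A(0)$ is right): the normal-derivative term $\tfrac12\langle(\partial h/\partial n)e_i,e_i\rangle$ comes from the $\dot\Gamma(e_i,n)$ piece (its two tangential-derivative Koszul terms cancel each other by relabelling indices), not from $\nabla^{g_0}_{e_i}\dot n(0)$, while the contribution $\dot{\bar g}^{ij}A_{ij}=-\sum h_{ij}A_{ij}$ cancels against the algebraic term $\sum h(\nabla^{g_0}_{e_i}n,e_i)=\sum A_{ij}h_{ij}$ rather than against tangential derivatives of $h$.
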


\noindent
Before the proof of Lemma \ref{l:deri}, 
we introduce some notations. 
First, we denote by $\n^{g_{t}}$ the Levi-Civita connection of $(\R^{3},g_{t})$. 
Using the metric $g_{t}$, for every $p \in \Sigma$, 
we choose $e_{1,t}(p)$, $e_{2,t}(p)$ and $n_{t}(p)$ 
so that $\{e_{1,t}(p),e_{2,t}(p)\}$ is an orthonormal basis of $T_{p}\S$ 
with respect to the metric $g_{t}$ and 
$n_{t}(p)$ the outer normal unit to $\Sigma$ at $p$ in the metric $g_{t}$. 
Moreover, we may assume that 
$e_{1,t}(p), e_{2,t}(p)$ and $n_t(p)$ are smooth in $p$ and $t$, 
$e_{1,0} = e_1, e_{2,0} = e_2$ and $n_{0}=n$. 
In addition, we denote by $\Gamma^{\kappa}_{\l \mu}(t)$ 
the Christoffel symbols of $(\R^{3},g_{t})$.

\

\begin{proof}
Since it is easy to see that $W(t)$ is smooth with respect to $t$, 
it is enough to show \eqref{eq:W-dot-0}. 
We divide our arguments into several steps. 
First we recall the variation of the volume element (see \cite{Besse}, Proposition 1.186)
$d \sigma_{t}$.

\medskip

\noindent
{\bf Step 1:} {\sl There holds
	\[
		\frac{d}{d t} d \sigma_t \Big|_{t=0} 
		= \frac{1}{2} {\rm tr} 
		\left( h \big|_{T_p\Sigma} \right) d \sigma_0(p).
	\] 
}

%\medskip
%
%
%
%		It is enough to consider the case where 
%$\S$ is covered by a single chart $(U,\varphi)$. 
%Let $\hat{g}_{ij}(x,t)$ be the local expression of 
%the induced metric on $\S$ from $g_{t}$. 
%First we remark that for any invertible matrix $A = (a_{ij})$, 
%there holds 
%	\[
%		\frac{\partial}{\partial a_{ij}} {\rm det}A 
%		= ({\rm det} A) a^{ji}. 
%	\]
%Using this formula and writing $\hat{g}_{ij}(x) = \hat{g}_{ij}(x,0)$, 
%we have 
%	\[
%		\frac{d }{d t} 
%		\sqrt{{\rm det} (\hat{g}_{ij}(x,t)})\Big|_{t=0} 
%		= \frac{1}{2} \left\{ {\rm det}(\hat{g}_{ij}(x)) \right\}^{-1/2} 
%		{\rm det} (\hat{g}_{ij}(x)) \hat{g}^{ij}(x) \hat{h}_{ij}(x) 
%		= \frac{1}{2}\sqrt{{\rm det} (\hat{g}_{ij}(x)) } 
%		\hat{g}^{ij}(x) \hat{h}_{ij}(x)
%	\]
%where $\hat{h}_{ij}(x)$ is the local expression of 
%the induced $(2,0)$-tensor on $\S$ from $h$. 
%Noting 
%	\[
%		{\rm Tr} \left( h|_{T_p\Sigma} \right) 
%		= \hat{g}^{ij}(x) \hat{h}_{ij}(x), \quad 
%		d \sigma_{t} = \sqrt{{\rm det}(\hat{g}_{ij}(x,t))} d x^{1} d x^{2},
%	\]
%Step 1 holds. 
%

\medskip

\noindent
Next we calculate the variation of $n_{t}$.

\medskip

\noindent
{\bf Step 2:} {\sl The following hold:
	\[
		(\dot{n}_0)_i := 
		g_0(\dot{n}_0,e_{i,0}) = - h_{ni}
		=: -h(n_0,e_{i,0}), \quad 
		(\dot{n}_0)_n := 
		g_0(\dot{n}_0, n_0) 
		= - \frac{1}{2} h_{nn} 
		=: - \frac{1}{2} h(n_0,n_0).
	\]
Since $\{e_{1,0},e_{2,0},n_{0}\}$ forms an orthonormal 
basis of $\R^{3}$, we have 
	\begin{equation}\label{eq:0.1}
		\dot{n}_0 = -h_{n1} e_{1,0} - h_{n2} e_{2,0} 
		- \frac{1}{2}h_{nn} n_0.
	\end{equation}}
By definition, there holds 
	\[
		1 = g_t(n_t,n_t) = g_0(n_t,n_t) 
		+ t h(n_t,n_t), \quad 
		0= g_t (n_t,e_{i,t}) = 
		g_0(n_t,e_{i,t}) + t h(n_t,e_{i,t}).
	\]
Differentiating these equalities with respect to $t$, we obtain 
	\begin{equation}\label{eq:0.2}
		0 = 2 g_0(\dot{n}_0,n_0) + 
		h(n_0,n_0), \quad 
		0 = g_0(\dot{n}_0,e_{i,0}) 
		+ g_0(n_0,\dot{e}_{i,0}) + h(n_0,e_{i,0}).
	\end{equation}
Noting that $e_{i,t} \in T_p\Sigma$ and that $T_{p}\Sigma$ 
is invariant with respect to $t$, 
one obtains $\dot{e}_{i,0} \in T_p\Sigma$. 
Recalling that $n_0 \perp T_p\Sigma$ 
with respect to $g_0$, one sees 
$g_0(n_0,\dot{e}_{i,0}) = 0$. 
Combining this fact with \eqref{eq:0.2}, 
Step 2 holds.

\medskip

\noindent
{\bf Step 3:}  {\sl One has 
	\[
		\dot{H}(0) = 
		-\sum_{i=1}^2 e_{i,0}(h_{ni}) 
		+
		\sum_{i,j=1}^2 h_{nj} 
		g_0( \n^{g_0}_{e_{i,0}} e_{i,0}, e_{j,0}) 
		- \frac{1}{2} h_{nn} H(0) 
		+ \frac{1}{2} \sum_{i=1}^2 
		g_0 \left( \frac{\partial h}{\partial n_0} 
		e_{i,0}, e_{i,0} \right). 
	\]
}

\medskip

We fix $p \in \S$ and calculate $\dot{H}(0)$ at $p$. 
Let $A_t$ be the second fundamental form of $\S$ in $(\R^3,g_t)$, i.e., 
	\[
		A_t(X,Y) := g_t( \nabla^{g_t}_X n_t, Y )
	\]
and set  
$A_{ij} := A_0(e_{i,0},e_{j,0})$. 
Recalling that $\{e_{1,t},e_{2,t}\}$ is an orthonormal basis of 
$T_p\Sigma$, we get 
	\[
		\begin{aligned}
			H(t) &= \sum_{i=1}^2 
			A_t(e_{i,t},e_{i,t}) 
			= 
			\sum_{i=1}^2 
			g_t( \n^{g_t}_{e_{i,t}} n_t, e_{i,t} )  
			= \sum_{i=1}^2
			\left\{ g_0(\n^{g_t}_{e_{i,t}} n_t, e_{i,t} ) 
			+ t h( \n^{g_t}_{e_{i,t}} n_t, e_{i,t}) 
			 \right\}
		\end{aligned}
	\]
and 
	\begin{equation}\label{eq:0.3}
		\dot{H}(0) 
		= \sum_{i=1}^2 
		\left\{ \frac{d }{d t}  
		g_0(\n^{g_t}_{e_{i,t}} n_t, e_{i,t} ) \Big|_{t=0} 
			+ h( \n^{g_0}_{e_{i,0}} n_0, e_{i,0}) 
			 \right\}.
	\end{equation}

		Next we compute the second term in \eqref{eq:0.3}. 
Since 
	\[
		\n^{g_0}_{e_{i,0}} n_0 
		= \sum_{j=1}^2 A_{ij} e_{j,0}, 
 	\]
we have 
	\begin{equation}\label{eq:0.4}
		\sum_{i=1}^2 h(\n^{g_0}_{e_{i,0}} n_0 , 
		e_{i,0} ) = 
		 \sum_{i,j = 1}^2 
		A_{ij} h(e_{j,0},e_{i,0}) = 
		 \sum_{i,j=1}^2 A_{ij} h_{ij}.
	\end{equation}
Let us turn next to the first term in \eqref{eq:0.3}: let $(x^1,x^2,x^3)$ be a global chart of $\R^3$ with 
$g_0(\partial /\partial x^{\alpha}, \partial / \partial x^{\beta} ) = \delta_{\a \b}$. 
Noting that 
	\[
		\frac{\partial g_{\alpha \l,0}}
		{\partial x^\mu} = 0 \quad 
		{\rm for\ all}\ \alpha,\l,\mu, \quad 
		\Gamma^{\kappa}_{\l \mu} (t) 
		= \frac{1}{2} g_t^{\kappa \alpha} 
		\left( \frac{\partial g_{\alpha \mu,t}}
		{\partial x^\l} 
		- \frac{\partial g_{\l \mu,t}}{\partial x^\alpha} 
		+ \frac{\partial g_{\alpha \l,t}}{\partial x^\mu}
		\right),
	\]
it follows that 
	\begin{equation}\label{eq:0.5}
		\begin{aligned}
			\frac{d}{d t} 
			\Gamma^{\kappa}_{\l \mu}(t)\Big|_{t=0} 
			&= 
			\frac{1}{2} \frac{d g^{\kappa\alpha}_t}{d t} 
			\Big|_{t=0} \left( \frac{\partial g_{\alpha \mu,0}}
		{\partial x^\l} 
		- \frac{\partial g_{\l \mu,0}}{\partial x^\alpha} 
		+ \frac{\partial g_{\alpha \l,0}}{\partial x^\mu}
		\right) 
		+ \frac{1}{2}g^{\kappa \alpha}_0 
		\left( \frac{\partial h_{\alpha \mu}}{\partial x^\l} 
		- \frac{\partial h_{\l \mu}}{\partial x^\alpha} 
		+ \frac{\partial h_{\alpha \l}}{\partial x^\mu} 
		\right)
		\\
		&= \frac{1}{2} \delta^{\kappa \alpha} 
		\left( \frac{\partial h_{\alpha \mu}}{\partial x^\l} 
		- \frac{\partial h_{\l \mu}}{\partial x^\alpha} 
		+ \frac{\partial h_{\alpha \l}}{\partial x^\mu} 
		\right).
		\end{aligned}
	\end{equation}
Thus, writing 
	\[
		e_{i,t} = e_i^\kappa(t) \frac{\partial}
		{\partial x^\kappa} , \quad  \qquad 
		n_t = n^\kappa(t) \frac{\partial}
		{\partial x^\kappa},
	\]
we get 
	\[
		\begin{aligned}
			\frac{d}{d t} \n^{g_t}_{e_{i,t}} n_{0,t} 
			\Big|_{t=0} 
			&= 
			\left( 
			\dot{e}^\l_i \frac{\partial n^\kappa}{\partial x^\l} 
			+ e^\l_i \frac{\partial \dot{n}^\kappa}{\partial x^\l} 
			+ \dot{\Gamma}^\kappa_{\l \mu} e^\l_i n^\mu 
			+ \Gamma^\kappa_{\l\mu} \dot{e}^\l_i n^\mu 
			+ \Gamma^\kappa_{\l\mu} e^\l_i \dot{n}^\mu
			\right) 
			\frac{\partial}{\partial x^\kappa}
		\\
			&= \n^{g_0}_{\dot{e}_{i,0}} n_0 
			+ \n^{g_0}_{e_{i,0}} \dot{n}_0 
			+ \dot{\Gamma}^\kappa_{\l\mu} e^\l_i n^\mu 
			\frac{\partial}{\partial x^\kappa}
		\end{aligned}
	\]
and 
	\begin{equation}\label{eq:0.6}
		\begin{aligned}
			\frac{d}{d t} g_0 ( \n^{g_t}_{e_{i,t}} n_t, e_{i,t} ) \Big|_{t=0} 
			&= g_0( \n^{g_0}_{\dot{e}_{i,0}} n_0, e_{i,0} ) 
			+ g_0 ( \n^{g_0}_{e_{i,0}} \dot{n}_0, e_{i,0} ) 
			+ \dot{\Gamma}^\kappa_{\l \mu} 
			e^\l_i n^\mu e^\alpha_i \delta_{\kappa\alpha}
			+ g_0\Big( \n^{g_0}_{e_{i,0}} n_0, \dot{e}_{i,0} \Big). 
		\end{aligned}
	\end{equation}
By \eqref{eq:0.5}, we have 
	\[
		\begin{aligned}
			\dot{\Gamma}^\kappa_{\l \mu} 
			e^\l_i n^\mu e^\alpha_i \delta_{\kappa\alpha}
			&=
			\frac{1}{2} \delta_{\kappa\alpha} 
			\delta^{\kappa\beta} 
			\left( \frac{\partial h_{\beta \mu}}{\partial x^\l} 
			- \frac{\partial h_{\l \mu}}{\partial x^\beta} 
			+ \frac{\partial h_{\beta \l}}{\partial x^\mu}
			\right) e^\l_i n^\mu e^\alpha_i
			=
			\frac{1}{2} 
			\left( \frac{\partial h_{\beta \mu}}{\partial x^\l} 
			- \frac{\partial h_{\l \mu}}{\partial x^\beta} 
			+ \frac{\partial h_{\beta \l}}{\partial x^\mu}
			\right) e^\l_i n^\mu e^\beta_i.
		\end{aligned}
	\]
Noting that, by a relabelling of the indices  
	\[
		\frac{\partial h_{\beta \mu}}{\partial x^{\l}} e^{\l}_{i} n^{\mu} e_{i}^{\beta} 
		= \frac{\partial h_{\l \mu}}{\partial x^{\beta}} 
		e^{\l}_{i} n^{\mu} e_{i}^{\beta},
	\]
we obtain 
	\begin{equation}\label{eq:0.7}
		\sum_{i=1}^2 \dot{\Gamma}^\kappa_{\l \mu} 
			e^\l_i n^\mu e^\alpha_i \delta_{\kappa\alpha} 
		= 
		\sum_{i=1}^2 \frac{1}{2} g_0 \left(  
		\frac{\partial h}{\partial n_0} e_{i,0}, e_{i,0} \right).
	\end{equation}
We consider now $g_0(\n^{g_0}_{e_{i,0}} \dot{n}_0,e_{i,0} )$: 
recalling \eqref{eq:0.1}, one has 
	\[
		\begin{aligned}
			g_0(\n^{g_0}_{e_{i,0}} \dot{n}_0, e_{i,0} ) 
			=& g_0\left( \n^{g_0}_{e_{i,0}} 
			\left\{
			\sum_{j=1}^2 (-h_{nj} e_{j,0}) - \frac{1}{2} 
			h_{nn} n_0 \right\}, e_{i,0} \right)
			\\
			=& 
			- \sum_{j=1}^2 
			\left[ e_{i,0} \big\{ g_0( h_{nj}e_{j,0},e_{i,0} ) 
			 \big\} - g_0( h_{nj}e_{j,0}, 
			 \n^{g_0}_{e_{i,0}} e_{i,0} )  \right] 
			 \\
			 & - \left[ 
			 \frac{1}{2} g_0 
			 \Big( e_{i,0}(h_{nn}) n_0, e_{i,0} \Big) 
			 + \frac{1}{2} h_{nn} 
			 g_0 ( \n^{g_0}_{e_{i,0}} n_0, e_{i,0} )\right].
		\end{aligned}
	\]
By $g_0(e_{j,0},e_{i,0}) = \delta_{ji}$ and 
$g_0(e_{i,0},n_0) = 0$,
there holds 
	\[
		\begin{aligned}
			g_0(\n^{g_0}_{e_{i,0}} \dot{n}_0, 
			e_{i,0} ) 
			&= - \sum_{j=1}^2 e_{i,0} (h_{nj} \delta_{ij}) 
			+ \sum_{j=1}^2 h_{nj} g_0(e_{j,0}, 
			\n^{g_0}_{e_{i,0}} e_{i,0} ) 
			- \frac{1}{2} h_{nn} 
			g_0 ( \n^{g_0}_{e_{i,0}}n_0,  e_{i,0})
			\\
			&=
			- e_{i,0} (h_{ni}) 
			+ \sum_{j=1}^2 h_{nj} g_0(e_{j,0}, 
			\n^{g_0}_{e_{i,0}} e_{i,0} ) 
			- \frac{1}{2} h_{nn} 
			g_0 ( \n^{g_0}_{e_{i,0}}n_0,  e_{i,0}). 
			\\
		\end{aligned}
	\]
Noting $H(0) = \sum_{k=1}^2 g_0 ( \n^{g_0}_{e_{k,0}} n_0, e_{k,0} )$, 
we see
	\begin{equation}\label{eq:0.8}
		\sum_{i=1}^2 
		g_0 ( \n^{g_0}_{e_{i,0}} \dot{n}_0, 
		e_{i,0} ) = 
		- \sum_{i=1}^2 
		e_{i,0} ( h_{ni}) + \sum_{i,j=1}^2 
		h_{nj} g_0(e_{j,0}, \n^{g_0}_{e_{i,0}} e_{i,0}) 
		- \frac{h_{nn}}{2}H(0). 
	\end{equation}

		Next, we calculate the terms
	\[
		\sum_{i=1}^2 g_0( \n^{g_0}_{\dot{e}_{i,0}} 
		n_0, e_{i,0} ), \quad \qquad 
		\sum_{i=1}^2 g_0( \n^{g_0}_{e_{i,0}} n_0, 
		\dot{e}_{i,0} ). 
	\]
For this purpose, we first compute $\dot{e}_{i,0}$. 
Differentiating 
	\[
		1 = g_t(e_{i,t},e_{i,t}) = g_0(e_{i,t},e_{i,t}) 
		+ t h ( e_{i,t}, e_{i,t}), \quad 
		0= g_t (e_{1,t}, e_{2,t}) 
		= g_0(e_{1,t},e_{2,t}) + t h (e_{1,t}, e_{2,t}),
	\]
one observes 
	\[
		0 = 2 g_0(\dot{e}_{i,0},e_{i,0}) 
		+ h(e_{i,0}, e_{i,0}), \quad 
		0 = g_0(\dot{e}_{1,0},e_{2,0}) 
		+ g_0(e_{1,0},\dot{e}_{2,0}) 
		+ h (e_{1,0},e_{2,0}),
	\]
which is expressed as 
	\[
		(\dot{e}_{i,0})_i = - \frac{1}{2} h_{ii}, 
		\quad \qquad (\dot{e}_{1,0})_2 
		+ (\dot{e}_{2,0})_1 = -h_{12}.
	\]
Set $\dot{e}_{1,0} =  \alpha_{11} e_{1,0} + \alpha_{12} e_{2,0}$ 
and $\dot{e}_{2,0} = \alpha_{21} e_{1,0} + \alpha_{22} e_{2,0}$: 
then we see that $\alpha_{ii}=-h_{ii}/2$ and  
	\[
		\begin{aligned}
			\sum_{i=1}^2 g_0( \n^{g_0}_{\dot{e}_{i,0}} 
			n_0, e_{i,0} ) 
			&= \sum_{i=1}^2 g_0 
			\left( \n^{g_0}_{\sum_{j=1}^2 \alpha_{ij} 
			e_{j,0}} n_0, e_{i,0} \right) 
			= \sum_{i,j=1}^2 
			\alpha_{ij} g_0( \n^{g_0}_{e_{j,0}} 
			n_0, e_{i,0} ) 
			= \sum_{i,j=1}^2 
			\alpha_{ij} A_{ji}.
		\end{aligned}
	\]
Noting that  $A_{ij}=A_{ji}$, we have 
	\[
		\alpha_{12} A_{21} + \alpha_{21} A_{12} 
		= A_{12} ( \alpha_{12} + \alpha_{21}) 
		= -A_{12} h_{12} = -\frac{1}{2} h_{12}A_{12} 
		-\frac{1}{2} h_{21}A_{21}.
	\]
Hence, it follows that 
	\begin{equation}\label{eq:0.9}
		\sum_{i=1}^2 
		g_0 ( \n^{g_0}_{\dot{e}_{i,0}} n_0, e_{i,0}) 
		= - \frac{1}{2} \sum_{i,j=1}^2 A_{ij}h_{ij}. 
	\end{equation}
Similarly we have 
	\begin{equation}\label{eq:0.10}
		\sum_{i=1}^2 g_0(\n^{g_0}_{e_{i,0}} n_0, 
		\dot{e}_{i,0}) 
		= \sum_{i,j=1}^2 g_0(\n^{g_0}_{e_{i,0}} 
		n_0, \alpha_{ij} e_{j,0} ) 
		= \sum_{i,j=1}^2 \alpha_{ij}A_{ij} 
		= -\frac{1}{2} \sum_{i,j=1}^2 A_{ij} h_{ij}.
	\end{equation}

By \eqref{eq:0.3}--\eqref{eq:0.10}, 
we obtain 
	\[
		\dot{H}(0) = 
		-\sum_{i=1}^2 e_{i,0}(h_{ni}) 
		+ \sum_{i,j=1}^2 
		h_{nj} g_0( \n^{g_0}_{e_{i,0}} e_{i,0}, e_{j,0}) 
		- \frac{1}{2} h_{nn} H(0) 
		+ \frac{1}{2} \sum_{i=1}^2 
		g_0 \left( \frac{\partial h}{\partial n_0} 
		e_{i,0}, e_{i,0} \right)
	\]
and Step 3 is completed.

\medskip

\noindent
{\bf Step 4:} {\sl Conclusion}

\medskip

Finally, we calculate $\dot{W}(0)$. 
Since there holds 
	\[
		\dot{W}(0) = 2 \int_{\S} H(0) \dot{H}(0) d \sigma_{0} 
		+  \int_{\S} H^{2}(0) d \dot{\sigma_{0}}, 
	\]
steps 1 and 3 yield 
	\[
	\begin{aligned}
	\dot{W}(0) 
	=& \frac{1}{2} 
	\int_{\Sigma} \Bigg[ 
	- 4 \sum_{i=1}^2 e_{i,0} (h_{ni}) H(0) 
	+ 4 \sum_{i,j=1}^2 h_{nj} \la \nabla^{g_0}_{e_{i,0}} , e_{j,0} \ra H(0) 
	- 2 h_{nn} H^2(0) 
	\\
	& \qquad \qquad 
		+ 2 \sum_{i=1}^2 
		\left\la \frac{\partial h}{\partial n_0} e_{i,0} , e_{i,0} \right\ra H(0) 
		+ H^2(0) 
	{\rm tr} \left( 
	h\big|_{T_p\Sigma} \right)
	\Bigg] \rd \sigma_0. 
	\end{aligned}
	\]
Thus we completed the proof. 
\end{proof}

\

\noindent 
Now we illustrate how we apply Lemma \ref{l:deri} 
to prove Proposition \ref{p:exp-W-to}. 
Set $t:=\e^{2}$. Recalling \eqref{eq:ex-g} and the definition of $g_\e$, we obtain  
	\begin{equation}\label{eq:ex-g-2}
		(g_{t})_{\alpha \beta}(x) 
		= \delta_{\alpha \beta} 
		+ \frac{t}{3} R_{\alpha\mu\nu \beta} x^{\mu} x^{\nu} 
		+ \tilde{R}(t,x), \quad 
		\left| \left( \frac{\partial}{\partial x} \right)^{\alpha} 
		\tilde{R}(t,x) \right|
		\leq C_{|\alpha|} t^{3/2}
	\end{equation}
for each $|x| \leq t^{-1/2} \rho_0$, $\alpha \in \Z_{+}^{3}$ and 
all sufficiently small $t>0$.

		Next, we set $\S = R\T$ and 
	\begin{equation}\label{eq:def-h}
		h_{\alpha \beta}(x) := 
		\frac{1}{3} R_{\alpha \mu \nu \beta} x^{\mu} x^{\nu}, \quad 
		(\tilde{g}_{t})_{\alpha \beta} (x) := \delta_{\alpha \beta} 
		+ t h_{\alpha \beta} (x). 
	\end{equation}
We also define 
	\[
		\tilde{W} (t) := \int_{R\T} \tilde{H}^{2}(t) d\sigma, \quad \qquad 
		W(t) := W_{g_t}( \exp_{P} ( R\T ) ), 
	\]
where $\tilde{H}(t)$ denotes the mean curvature of $R\T$ in $(\R^3,\tilde{g}_t)$. 
Then from \eqref{eq:ex-g-2} it follows that  
	\[
		\| g_{t} - \tilde{g}_{t} \|_{C^{k}(\overline{B_{3}(0)})} 
		\leq C_{k} t^{3/2} 
	\]
for all $k \in \N$. Hence, we may find  $C_0>0$, 
which is independent of $R \in SO(3)$, so that 
	\[
		| W (t) - \tilde{W} (t) | \leq C_{0} t^{3/2}. 
	\]
Noting that $t=\e^{2}$, to prove Proposition \ref{p:exp-W-to}, 
it suffices to show 
	\[
		\dot{\tilde{W}}(0) 
		= - 4 \sqrt{2} \pi^2 \left\{ \Sc_P - \Ric_P(R\mathbf{e}_z, R \mathbf{e}_z) \right\}. 
	\]
Furthermore, using the orthonormal basis $\{R \mathbf{e}_x, R \mathbf{e}_y, R \mathbf{e}_z\}$ 
of $\R^3$, we can reduce ourselves to the case $\S = \T$ (namely $R = {\rm Id}$) and 
it suffices to prove 
	\begin{equation}\label{eq:W1-dot}
		\dot{\tilde{W}}(0)  = - 4 \sqrt{2} \pi^{2} 
		\left\{ {\rm Sc}_P - \Ric_P(\mathbf{e}_z,\mathbf{e}_z) \right\}
		= -  4 \sqrt{2} \pi^{2} 
		\left\{ R_{11} + R_{22} \right\}. 
	\end{equation}

\
	
\noindent Now we will apply Lemma \ref{l:deri} 
to obtain \eqref{eq:W1-dot}. We use the map $X$ defined in \eqref{eq:def-X} for parametrizing $\T$, 
and as $e_1, e_2$ and $n$ we choose 
	\[
		\begin{aligned}
			e_1 &:= X_\varphi 
				= \left( - \sin \varphi \cos \theta,\ 
				- \sin \varphi \sin \theta, \ 
				\cos \varphi  \right),
			\\
			e_2 &:= X_\theta/ | X_\theta | 
				= \left(  -\sin \theta, \ \cos \theta, \ 
				0 \right),
			\\
			n &:= \left( \cos \varphi \cos \theta, \ 
				\cos \varphi \sin \theta, \ 
				\sin \varphi \right). 
		\end{aligned}
	\]

\begin{lem}\label{l:basic-pro}
The following hold:

\noindent
{\rm (I)} The volume element $d \sigma$ and 
the mean curvature of $\T$ with respect to the Euclidean metric are 
	\[
		d \sigma = \left( \sqrt{2} + \cos \varphi \right) 
		d \varphi d \theta \qquad 
		{\rm and} \qquad 
		H =  \frac{\sqrt{2} + 2 \cos \varphi}{\sqrt{2} + \cos \varphi}. 
	\]

\noindent
{\rm (II)} $ \la \n_{e_{1}} e_{1}, e_{1} \ra = \la \n_{e_{2}} e_{2}, e_{2} \ra 
= \la \n_{e_{1}} e_{1} , e_{2}\ra = 0$ and 
$ \la \n_{e_{2}} e_{2} , e_{1} \ra = \sin \varphi / ( \sqrt{2} + \cos \varphi) $.

\noindent
{\rm (III)} For the Ricci tensor $\Ric_{P}$, one has  
	\[
		\begin{aligned}
			\int_{0}^{2 \pi} 
			\Ric_{P}(X,n) d \theta 
			&= \pi \left[  
				{\rm Sc}_P 
				( \sqrt{2} + \cos \varphi ) \cos \varphi 
				+ R_{33} 
				\left\{ 2 \sin^2 \varphi - 
				( \sqrt{2} + \cos \varphi ) \cos \varphi
				 \right\} \right],
			\\
			\int_0^{2\pi} 
			\Ric_P(n,n) d \theta 
			&= 
				\pi \left[ {\rm Sc}_P \cos^2 \varphi 
				+ R_{33}( 2 - 3\cos^2 \varphi ) \right],
			\\
			\int_0^{2\pi} 
			\Ric_P (X,e_1) d \theta 
			&= \pi 
				\left[ -{\rm Sc}_P ( \sqrt{2} + \cos \varphi ) 
				\sin \varphi + R_{33} 
				\left\{  
				\sqrt{2} \sin \varphi  
				+ 3 \cos \varphi \sin \varphi 
				\right\}
				\right],
			\\
			\int_0^{2\pi} \Ric_P(n,e_1) d \theta 
			&= 
				\pi \left[ -{\rm Sc}_P \cos \varphi \sin \varphi 
				+ 3 R_{33} \cos \varphi \sin \varphi \right].
		\end{aligned}
	\]
Here we remark that $X,n,e_1$ are expressed in normal coordinates 
around $P \in M$: 
\[
X=X^\alpha \left( \frac{\partial}{\partial x^\alpha} \right)_P, \ 
n = n^\alpha \left( \frac{\partial}{\partial x^\alpha} \right)_P, \ 
e_1 = e_1^\alpha \left( \frac{\partial}{\partial x^\alpha} \right)_P \in T_PM, \quad 
g(P) \left( \left( \frac{\partial}{\partial x^\alpha} \right)_P, 
\left( \frac{\partial}{\partial x^\beta} \right)_P \right) = \delta_{\alpha \beta}. 
\]

\noindent
{\rm (IV)} There holds 
	\[
		\int_{0}^{2\pi} \frac{1}{\sqrt{2} + \cos \varphi} d \varphi 
		= 2 \pi, \quad 
		\int_{0}^{2\pi} \frac{\cos \varphi}{\sqrt{2} + \cos \varphi} 
		d \varphi = 2 \pi - 2 \sqrt{2} \pi, \quad 
		\int_{0}^{2\pi} \frac{\cos^{2} \varphi}{\sqrt{2} + \cos \varphi} 
		d \varphi = 4\pi - 2 \sqrt{2} \pi. 
	\]
\end{lem}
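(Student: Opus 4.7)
The plan is to carry out direct computations in the explicit parametrization $X(\varphi,\theta)$ given in \eqref{eq:def-X}. Since the frame $\{\partial/\partial x^\alpha\}_P$ is orthonormal at $P$ and we are pulling back via $\exp_P$ in normal coordinates, at $P$ the ambient metric and its Levi-Civita connection coincide with the Euclidean ones, so all quantities can be computed as if $\T$ were sitting inside $(\R^3,g_0)$ with the prescribed vector-valued components of $X$, $n$, $e_1$.

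For (I) and (II), I would first differentiate $X$ to obtain $X_\varphi$, $X_\theta$, $X_{\varphi\varphi}$, $X_{\theta\theta}$, observing that $|X_\varphi|=1$ and $|X_\theta|=\sqrt{2}+\cos\varphi$; this gives the area element at once. Since $X_{\varphi\varphi}=-n$ is purely normal, both tangent inner products $\langle\nabla_{e_1}e_1,e_k\rangle$ vanish, which yields the first three identities in (II). For the remaining one I would compute the tangent components of $(\sqrt{2}+\cos\varphi)^{-2} X_{\theta\theta}$ along $e_1$ and $e_2$, obtaining $\sin\varphi/(\sqrt{2}+\cos\varphi)$ and $0$ respectively. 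The mean curvature is then most cleanly obtained from $H=\mathrm{div}_{\T}\, n$: using $n_\varphi = e_1$ and $n_\theta = \cos\varphi\,(-\sin\theta,\cos\theta,0)$, one reads off $\langle\nabla_{e_1}n,e_1\rangle = 1$ and $\langle\nabla_{e_2}n,e_2\rangle = \cos\varphi/(\sqrt{2}+\cos\varphi)$, summing to the claimed expression.

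For (III), I would expand $\Ric_P(V,W) = R_{\alpha\beta} V^\alpha W^\beta$ with $V,W \in \{X,n,e_1\}$ and exploit that the Cartesian components involve only $\cos\theta$, $\sin\theta$, or are $\theta$-independent; hence every off-diagonal contribution ($\alpha\neq\beta$) integrates to zero over $[0,2\pi]$, leaving only the diagonal terms $R_{11},R_{22},R_{33}$ weighted by elementary integrals of $\cos^2\theta$, $\sin^2\theta$, and constants. The stated form involving $\mathrm{Sc}_P$ and $R_{33}$ is then obtained by substituting $R_{11}+R_{22} = \mathrm{Sc}_P - R_{33}$, valid in dimension three. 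Part (IV) is then routine: the first integral is the standard $\int_0^{2\pi} d\varphi/(a+\cos\varphi) = 2\pi/\sqrt{a^2-1}$ with $a=\sqrt{2}$, while the second and third reduce to the first via the algebraic identities $\cos\varphi/(\sqrt{2}+\cos\varphi) = 1 - \sqrt{2}/(\sqrt{2}+\cos\varphi)$ and $\cos^2\varphi/(\sqrt{2}+\cos\varphi) = \cos\varphi - \sqrt{2}\cos\varphi/(\sqrt{2}+\cos\varphi)$. The only mildly delicate point is the bookkeeping in (III), but once the parity argument in $\theta$ is stated, the remaining computations are entirely direct.
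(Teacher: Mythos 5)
Your proposal is correct and follows essentially the same route as the paper: direct computation in the explicit parametrization $X(\varphi,\theta)$ for (I) and (II), the $\theta$-parity argument killing the off-diagonal terms $R_{\alpha\beta}$ ($\alpha\neq\beta$) together with $R_{11}+R_{22}=\mathrm{Sc}_P-R_{33}$ for (III), and reduction of the integrals in (IV) to the basic one $\int_0^{2\pi}(\sqrt{2}+\cos\varphi)^{-1}d\varphi=2\pi$ (which the paper evaluates via the $t=\tan(\varphi/2)$ substitution, equivalent to your closed formula). All the individual identities you quote ($X_{\varphi\varphi}=-n$, $n_\varphi=e_1$, $n_\theta=\cos\varphi\,e_2$, the algebraic splittings of $\cos\varphi/(\sqrt2+\cos\varphi)$ and $\cos^2\varphi/(\sqrt2+\cos\varphi)$) check out.
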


\begin{proof}
Assertions (I) and (II) are basic computations. For (III), note that 
	\begin{equation}\label{34}
		\Ric_{P}(X,n) = R_{\alpha \beta} X^{\alpha} n^{\beta}, 
		\quad \qquad {\rm Sc}_{P} = R_{11} + R_{22} + R_{33}. 
	\end{equation}
Since 
	\[
		0 = \int_{0}^{2\pi} \cos \theta d \theta 
		= \int_{0}^{2\pi} \sin \theta d \theta 
		= \int_{0}^{2\pi} \sin \theta \cos \theta d \theta, 
		\quad  
		\pi = \int_{0}^{2\pi} \cos^{2} \theta d \theta 
		= \int_{0}^{2\pi} \sin^{2} \theta d \theta,
	\]
it follows from the definitions of $X$,$n$ and \eqref{34} that 
	\[
		\begin{aligned}
			\int_{0}^{2\pi} \Ric_P (X,n) d \theta 
			&= R_{11} \pi 
				( \sqrt{2} + \cos \varphi ) \cos \varphi  
				+ R_{22} \pi 
				(\sqrt{2} + \cos \varphi ) \cos \varphi 
				+ 2 \pi R_{33} \sin^2 \varphi 
			\\
			&= 
				\pi \left[ 
				{\rm Sc}_{P} 
				( \sqrt{2} + \cos \varphi ) \cos \varphi 
				+ R_{33} 
				\left\{ 2 \sin^2 \varphi - 
				( \sqrt{2} + \cos \varphi ) \cos \varphi
				 \right\} 
				 \right].
		\end{aligned}
	\]
We can show in a similar way for other quantities, 
so we omit the details.

		Finally, for (IV), using the change of variables 
	\[
		t = \tan \left( \frac{\varphi}{2} \right), \quad 
		\cos \varphi = \frac{1-t^{2}}{1+t^{2}}, \quad 
		d \varphi = \frac{2}{1+t^{2}} dt
		\qquad 
		{\rm for}\ \varphi \in (0,\pi)
	\]
and 
	\[
		\cos \varphi 
		= (\sqrt{2} + \cos \varphi) - \sqrt{2},
		\qquad 
		\cos^{2} \varphi 
		= ( \cos \varphi + \sqrt{2} ) 
		(\cos \varphi - \sqrt{2} ) + 2, 
	\]
we easily get (IV). Thus we complete the proof. 
\end{proof}

\

We will now  prove Proposition \ref{p:exp-W-to}.

	\begin{proof}[Proof of Proposition \ref{p:exp-W-to}]
By the above arguments, 
it is enough to prove \eqref{eq:W1-dot}. 
First we express $h$ in \eqref{eq:def-h} 
in terms of ${\rm Sc}_{P}$ and $\Ric_{P}$. 
Substituting 
$g_{\alpha \beta} = \delta_{\alpha \beta}$ and 
\eqref{eq:RicciDec} into \eqref{eq:def-h}, we get 
	\begin{equation}\label{eq:ex-h}		
			h_{\alpha\beta} (x)
			= \frac{{\rm Sc}_P}{6} ( 
			|x|^2 \delta_{\alpha\beta} 
			- x_\alpha x_\beta) 
			- \frac{1}{3} \delta_{\alpha\beta} 
			\Ric_P (x,x) 
			- \frac{1}{3}|x|^2 R_{\alpha\beta} 
			+ \frac{1}{3} 
			( x_\alpha R_{\beta\mu}x^\mu + 
			x_\beta R_{\alpha\mu} x^\mu).
	\end{equation}
For the normal derivative of $h$, 
it follows from \eqref{eq:ex-h} that 
	\begin{equation}\label{eq:nor-deri-h}
		\begin{aligned}
			\frac{\partial h_{\alpha\beta}}{\partial n} (x) 
			=& \frac{d}{d t} h_{\alpha\beta} 
				(x + tn) |_{t=0}
			\\
			=& \frac{{\rm Sc}_{P}}{6} 
				\left( 2 \langle x, n \rangle 
				\delta_{\alpha\beta} - x_\alpha n_\beta 
				- n_\alpha x_\beta \right) 
				- \frac{2}{3}\delta_{\alpha\beta} 
				\Ric_{P} ( x, n) 
				- \frac{2}{3} \langle x, n \rangle 
				R_{\alpha\beta} 
			\\
			& + 
				\frac{1}{3} ( x_\alpha R_{\beta\mu} 
				n^\mu + n_\alpha R_{\beta\mu} x^\mu  
				+ x_\beta R_{\alpha\mu} n^\mu 
				+ n_\beta R_{\alpha\mu} x^\mu). 
		\end{aligned}
	\end{equation}

		In what follows, 
we compute each term in \eqref{eq:W-dot-0} 
using Lemma \ref{l:basic-pro}, \eqref{eq:ex-h} and 
\eqref{eq:nor-deri-h}.

\bigskip

\noindent
{\bf Step 1:} 
$\displaystyle 2 \sum_{i=1}^2 
\left\la \frac{\partial h}{\partial n} e_i,e_i \right\ra H$.

\bigskip

		Since it follows from \eqref{eq:nor-deri-h} that 
	\[
		\begin{aligned}
			{\rm tr}_{\R^3} 
			\left( \frac{\partial h}{\partial n} (X) \right)
			&= 
				\frac{{\rm Sc}_{P}}{6} 
				\left\{ 6  \la X,n \ra - 2 \la X,n \ra \right\} 
				- 2 \Ric_P ( X,n ) - \frac{2}{3} \la X, n \ra {\rm Sc}_{P} 
				+ \frac{4}{3} \Ric_{P} (X,n)
			\\
			&= - \frac{2}{3} \Ric_P(X,n),
			\\
			\left\la \frac{\partial h}{\partial n} n, n \right\ra
			&= 
				\frac{{\rm Sc}_{P}}{6}\left\{ 2 \la X,n \ra - 2 \la X,n \ra \right\} 
				- \frac{2}{3} \Ric_P (X,n) 
				- \frac{2}{3} \la X, n \ra \Ric_P (n,n)
			\\
			&\quad 
				+ \frac{2}{3} \{ \la X,n \ra \Ric_P(n,n) + \Ric_P(X,n) \}  = 0,
		 \end{aligned}
	\]
we obtain 
	\[
		\sum_{i=1}^2 
		\left\la \frac{\partial h}{\partial n}(X) e_i, e_i \right\ra
		= - \frac{2}{3} \Ric_P (X,n). 
	\]
Therefore, by Lemma \ref{l:basic-pro}, we have 
	\[
		\begin{aligned}
			&\int_{\T} 2 \sum_{i=1}^2 
			\left\la \frac{\partial h}{\partial n}(X) e_i, e_i \right\ra H d \sigma_0
			\\
			=& - \frac{4}{3} \int_0^{2\pi}\int_{0}^{2\pi} 
			\Ric_P(X,n) ( \sqrt{2} + 2 \cos \varphi ) 
				d \theta d \varphi
			\\
			=& 
				-\frac{4 \pi }{3} \int_0^{2\pi} 
				\left[ {\rm Sc}_{P} 
				( \sqrt{2} + \cos \varphi ) \cos \varphi 
				+ R_{33} 
				\left\{ 2 \sin^2 \varphi - 
				( \sqrt{2} + \cos \varphi ) \cos \varphi
				 \right\} \right] 
				 ( \sqrt{2} + 2 \cos \varphi ) d \varphi.
		\end{aligned}
	\]
Since 
	\[
		\begin{aligned}
			\int_0^{2\pi} (\sqrt{2} + \cos \varphi ) 
			( \sqrt{2} + 2 \cos \varphi ) \cos \varphi 
			d \varphi 
			&= \int_{0}^{2\pi} 
			( 2 + 3 \sqrt{2} \cos \varphi 
			+ 2 \cos^2 \varphi ) \cos \varphi d \varphi 
			= 3 \sqrt{2} \pi,
			\\
			\int_0^{2\pi} 
			2 \sin^2 \varphi ( \sqrt{2} + 2 \cos \varphi ) 
			d \varphi
			&= \int_0^{2 \pi} 2 \sqrt{2} \sin^2 \varphi 
			+ 4 \sin^2 \varphi \cos \varphi d \varphi 
			= 2 \sqrt{2} \pi,
		\end{aligned}
	\]
we get 
	\begin{equation}\label{eq:4}
		\begin{aligned}
			\int_{\T} 2 \sum_{i=1}^2 
			\left\la \frac{\partial h}{\partial n}(X) e_i, 
			e_i \right\ra H d \sigma_0
			&=
				 -\frac{4 \pi}{3} 
				\left\{3 \sqrt{2} \pi {\rm Sc}_{P} + R_{33} 
				( 2 \sqrt{2} \pi - 3 \sqrt{2} \pi ) 
				\right\}
			\\
			&= -4 \sqrt{2} \pi^2 {\rm Sc}_{P} 
			+ \frac{4\sqrt{2}}{3} \pi^2 R_{33}. 
		\end{aligned}
	\end{equation}

\bigskip

\noindent
{\bf Step 2:} 
$\displaystyle -4 \sum_{i=1}^2 e_i(h_{ni}) H 
+ 4 \sum_{i,j=1}^2 h_{nj} \la \n_{e_i}e_i,e_j \ra H$.

\bigskip

		For a smooth function $f \in C^\infty(\Sigma)$, 
we have 
	\[
		e_1(f) = \frac{\partial f}{\partial \varphi}, 
		\quad 
		e_2 (f) = \frac{1}{\sqrt{2} + \cos \varphi} 
		\frac{\partial f}{\partial \theta}. 
	\]
Thus  integrating by parts, there holds 
	\[
		\int_{\T} e_1 ( h_{n1}) H(0) \rd \sigma_0 
		= \int_{0}^{2\pi} d \theta 
		\int_0^{2\pi} 
		\frac{\partial h_{n1}}{\partial \varphi} 
		( \sqrt{2} + 2 \cos \varphi ) d \varphi 
		=  \int_0^{2\pi} d \theta 
		\int_0^{2\pi} h_{n1} 2 \sin \varphi 
		d \varphi.
	\]
On the other hand, we see that 
	\[
		\int_{\T} e_2(h_{n2})H(0) d \sigma_0 
		= \int_0^{2\pi} d \varphi 
		\int_0^{2\pi} \frac{1}{\sqrt{2}+\cos \varphi} 
		\frac{\partial h_{n2}}{\partial \theta} 
		( \sqrt{2} + 2 \cos \varphi ) d \theta 
		= 0.
	\]
Therefore, one has 
	\[
		\int_{\T} -4 \sum_{i=1}^2 e_i(h_{ni}) H 
		d \sigma_0 
		= -8 \int_0^{2\pi} \int_{0}^{2\pi} 
		h_{n1} \sin \varphi d \varphi d \theta. 
	\]
Also, it follows from Lemma \ref{l:basic-pro} that 
	\[
		\sum_{i,j=1}^2 
		h_{nj} \la \n_{e_i}e_i,e_j \ra H 
		= h_{n1} \la \n_{e_2}e_2,e_1 \ra H 
		= \frac{ \sin \varphi}{\sqrt{2} + \cos \varphi} h_{n1} H.
	\]
Then we have 
	\[
		\int_{\T} 4 \sum_{i,j=1}^2 
		h_{nj} \la \n_{e_i}e_i, e_j \ra H 
		d \sigma_0 
		=  \int_0^{2\pi} \int_0^{2\pi} 
		4 h_{n1} \sin \varphi 
		\frac{\sqrt{2} + 2 \cos \varphi}
		{\sqrt{2} + \cos \varphi} d \varphi d \theta
	\]
and 
	\begin{equation}\label{eq:5}
		\begin{aligned}
			\int_{\T} -4 \sum_{i=1}^2 e_i(h_{ni}) H 
			 +4 \sum_{i,j=1}^2	h_{nj} \la \n_{e_i} e_i, e_j \ra H 
			d \sigma_0 
			=& 
			 4 \int_0^{2\pi}\int_0^{2\pi} h_{n1} 
			\left( - 2 + 
			\frac{\sqrt{2} + 2 \cos \varphi}
			{\sqrt{2} + \cos \varphi}
			 \right) \sin \varphi d \theta d \varphi 
			\\
			=& 
			- 4 \int_0^{2\pi}\int_0^{2\pi} h_{n1} 
			\frac{\sqrt{2}}
			{\sqrt{2} + \cos \varphi } \sin \varphi 
			d \theta d \varphi .
		\end{aligned}
	\end{equation}

		Next, noting that 
	\[
		\begin{aligned}
			h_{n1} =& -\frac{{\rm Sc}_{P}}{6} 
				\langle X,n \rangle \langle X, e_1 \rangle 
				- \frac{1}{3}|X|^2 \Ric_P(n,e_1) 
				+ \frac{1}{3} 
				\{ \langle X,n \rangle \Ric_{P} (X,e_1) 
				+ \langle X, e_1 \rangle \Ric_{P} (X,n) \}
			\\
			=& \frac{{\rm Sc}_{P}}{6} 
				\sqrt{2} \sin \varphi 
				( \sqrt{2} \cos \varphi + 1 ) 
				- \frac{1}{3} ( 3 + 2 \sqrt{2} \cos 
				\varphi ) \Ric_P (n,e_1) 
			\\
			&+ \frac{1}{3} 
				\left\{ (\sqrt{2} \cos \varphi + 1) 
				\Ric_P(X,e_1) - \sqrt{2} \sin \varphi 
				\Ric_P (X,n)  \right\},
		\end{aligned}
	\]
one gets 
	\[
		\begin{aligned}
			\int_0^{2\pi} h_{n1} d \theta
			=& \frac{\sqrt{2}}{3} \pi {\rm Sc}_{P} 
				\sin \varphi ( \sqrt{2} \cos \varphi + 1) 
				- \frac{1}{3} ( 3 + 2 \sqrt{2} \cos \varphi ) 
				\pi ( - {\rm Sc}_P + 3 R_{33}) \cos \varphi 
				\sin \varphi 
			\\
			&+ \frac{1}{3} ( \sqrt{2} \cos \varphi + 1 ) 
				\pi \left[ -{\rm Sc}_{P} ( \sqrt{2} + \cos \varphi ) 
				\sin \varphi + R_{33} 
				\left\{ 
				3 \sin \varphi \cos \varphi + 
				\sqrt{2} \sin \varphi
				\right\} \right]
			\\
			& - \frac{\sqrt{2}}{3} \pi \sin \varphi 
				\left[ 	{\rm Sc}_{P} 
				( \sqrt{2} + \cos \varphi ) \cos \varphi 
				+ R_{33} 
				\left\{ 2 \sin^2 \varphi - 
				( \sqrt{2} + \cos \varphi ) \cos \varphi
				 \right\}  \right] 
			\\
			=& - \frac{R_{33}}{3} \pi \sin \varphi 
			( 2 \cos \varphi + \sqrt{2}). 
		\end{aligned}
	\]
Substituting this equation into \eqref{eq:5}, we obtain
	\[
		\begin{aligned}
		\int_{\T} -4 \sum_{i=1}^2 e_i(h_{ni}) H 
			+ 4 \sum_{i,j=1}^2 	h_{nj} \la \n_{e_i}e_i, e_j \ra H 
			d \sigma_0
		=& \frac{4 \pi}{3} R_{33} 
			\int_0^{2\pi} 
			\frac{(\sqrt{2} + 2 \cos \varphi) 
			\sqrt{2}}
			{\sqrt{2} + \cos \varphi} \sin^2 \varphi 
			d \varphi  
		\\
		=& \frac{4 \pi}{3} R_{33} 
		\int_{0}^{2\pi} \sqrt{2} 
		\left( 2 - \frac{\sqrt{2}}{\sqrt{2}+\cos\varphi} 
		\right)\sin^2 \varphi  d \varphi
		\\
		=&\frac{8\sqrt{2}\pi^{2}}{3} R_{33} 
		- \frac{8\pi}{3} R_{33} 
		\int_{0}^{2\pi} \frac{1-\cos^2\varphi}
		{\sqrt{2} + \cos \varphi} d \varphi .
		\end{aligned}
	\]
Finally by Lemma \ref{l:basic-pro} we obtain
	\begin{equation}\label{eq:6}
		\begin{aligned}
		\int_{\T} -4 \sum_{i=1}^2 e_i(h_{ni}) H 
			+ 4 \sum_{i,j=1}^2 
			h_{nj} \la \n_{e_i}e_i, e_j \ra H d \sigma_0 
			&= \frac{8 \sqrt{2}}{3} \pi^2 R_{33} 
			- \frac{8\pi}{3} R_{33} 
			( 2 \sqrt{2} \pi - 2 \pi ) 
		\\
		&=  \frac{8}{3} \pi^2 R_{33}
		\left( 2 - \sqrt{2} \right).
		\end{aligned}
	\end{equation}

	\bigskip

\noindent
{\bf Step 3:} 
$-2 h_{nn} H^2 + H^2 {\rm tr} (h_{|T_X\T})$.

	\bigskip

		By \eqref{eq:ex-h}, one observes 
	\begin{equation}\label{eq:7}
		-2 h_{nn} 
		= - \frac{{\rm Sc}_{P}}{3} 
		\{ |X|^2 - \la X,n \ra^2 \} 
		+ \frac{2}{3} \Ric_P(X,X) 
		+ \frac{2}{3} |X|^2 \Ric (n,n) 
		- \frac{4}{3} \la X,n \ra \Ric_P (X,n). 
	\end{equation}
Since $\{e_1,e_2\}$ forms an orthonormal basis of $T_X\T$ and 
	\[
		\la h(X) e_i, e_i \ra 
		= \frac{{\rm Sc}_{P}}{6} 
		\{ |X|^2 - \la X,e_i \ra^2 \}  
		- \frac{1}{3} \Ric_P (X,X) 
		- \frac{1}{3}|X|^2 \Ric_P(e_i,e_i) 
		+ \frac{2}{3} \la X,e_i \ra \Ric_P(X,e_i) 
		\quad i=1,2,
	\]
it follows from $\la X, e_2 \ra = 0$ that 
	\[
		\begin{aligned}
			&\mathrm{tr} \left( h_{| T_X \T} \right) 
			\\
			= &\frac{\Sc_P}{6} \left( 2 |X|^2 - \la X,e_1\ra^2 \right) 
			- \frac{2}{3} \Ric_P(X,X) - \frac{|X|^2}{3} 
			\left\{ \Ric_P(e_1,e_1) + \Ric_P(e_2,\e _2) \right\} 
			+ \frac{2}{3} \la X, e_1 \ra \Ric_P(X,e_1). 
		\end{aligned}
	\]
Since ${\rm Sc}_{P} = \Ric_P(e_1,e_1) + \Ric_P(e_2,e_2) + \Ric_P(n,n)$, we have 
	\begin{equation}\label{eq:8}
		{\rm tr}\left( h(X)_{|T_X\T} \right) 
		= - \frac{{\rm Sc}_{P}}{6} \la X,e_1 \ra^2 
		- \frac{2}{3} \Ric_P(X,X) + 
		\frac{1}{3}|X|^2 \Ric_P(n,n) 
		+ \frac{2}{3} \la X,e_1 \ra \Ric_P(X,e_1).
	\end{equation}
Combining \eqref{eq:7} and \eqref{eq:8} with 
$\la X,e_2 \ra = 0$ and $|X|^2 - \la X,n \ra^2 = \la X, e_1 \ra^2$, 
we get 
	\[
		\begin{aligned}
			& -2 h_{nn} 
			+ {\rm tr} \left( h(X)_{|T_X\T} \right) 
			\\
			&= 
				- \frac{{\rm Sc}_{P}}{2} \la X,e_1 \ra^2 + |X|^2 \Ric_P (n,n) 
				+ \frac{2}{3} \la X, e_1 \ra \Ric_P(X,e_1) 
				- \frac{4}{3} \la X,n \ra \Ric_P (X,n) 
			\\
			&= - {\rm Sc}_{P} \sin^2 \varphi 
				+ (3+2\sqrt{2}\cos \varphi ) 
				\Ric_P (n,n) 
				- \frac{2\sqrt{2}}{3} 
				\sin \varphi  \Ric_P(X,e_1) 
				-\frac{4}{3}(\sqrt{2} \cos \varphi + 1) 
				\Ric_P (X,n). 
		\end{aligned}
	\]
Hence, from Lemma \ref{l:basic-pro}, there holds 
	\[
		\begin{aligned}
			&\int_0^{2\pi} -2 h_{nn} 
			+ {\rm tr}\left( h(X)|_{T_X\T} \right) d \theta 
			\\
			=&
				- 2\pi {\rm Sc}_{P} \sin^2 \varphi 
				+ ( 3 + 2 \sqrt{2} \cos \varphi ) \pi 
				\left[ {\rm Sc}_{P} \cos^2 \varphi + R_{33} 
				( 2 - 3 \cos^2 \varphi ) \right] 
			\\ 
			& 
				-\frac{2 \sqrt{2}}{3}\pi  
				\sin \varphi \left[ - {\rm Sc}_{P} 
				( \sqrt{2} + \cos \varphi ) \sin \varphi 
				+ R_{33} \left\{  \sqrt{2} \sin \varphi 
				 + 3 \cos \varphi \sin \varphi 
				 \right\} \right]
			\\
			& - \frac{4}{3} \pi 
				( \sqrt{2} \cos \varphi + 1) 
				\left[ {\rm Sc}_{P}( \sqrt{2} + \cos \varphi )
				 \cos \varphi 
				+ R_{33} \left\{ 2 \sin^2 \varphi 
				- (\sqrt{2} + \cos \varphi ) \cos \varphi
				\right\} \right]
			\\
			=&
			-\frac{\pi}{3} {\rm Sc}_{P} 
			( \cos \varphi + \sqrt{2})^2 
			+ \frac{\pi}{3} R_{33} 
			\left\{ (\cos \varphi + \sqrt{2} ) 
			( -3 \cos \varphi + 5 \sqrt{2}) - 4 \right\}.
		\end{aligned}
	\]
From this equality it follows that 
	\[
		\begin{aligned}
			&\int_{\T} \left\{-2 h_{nn} 
			+ {\rm tr}\left( h_{|T_X\T} \right) 
			\right\} H^2 d \sigma_0
			\\
			=& 
				\int_{0}^{2\pi} 
				\left[ -\frac{\pi}{3} {\rm Sc}_{P} 
				(\cos \varphi + \sqrt{2}) 
				(\sqrt{2} + 2 \cos \varphi)^2 
				\right] d \varphi
			\\
			& + \int_{0}^{2\pi} 
				\frac{\pi}{3} R_{33} 
				\left[
				\left\{ 
				(-3 \cos \varphi + 5 \sqrt{2} ) 
				- \frac{4}{\cos\varphi + \sqrt{2}}
				\right\} 
				( \sqrt{2} + 2 \cos \varphi )^2 
				\right] d \varphi.
		\end{aligned}
	\]
Since 
	\[
		\begin{aligned}
			(\cos \varphi + \sqrt{2}) 
			( \sqrt{2} + 2 \cos \varphi)^2 
			&= 4 \cos^3 \varphi + 8 \sqrt{2} 
				\cos^2 \varphi + 10 \cos \varphi 
				+ 2 \sqrt{2},
			\\
			(-3 \cos \varphi + 5 \sqrt{2} ) 
			( \sqrt{2} + 2 \cos \varphi )^2 
			&= - 12 \cos^3 \varphi + 
			8 \sqrt{2} \cos^2 \varphi 
			+ 34 \cos \varphi 
			+ 10 \sqrt{2},
			\\
			(\sqrt{2} + 2 \cos \varphi)^2 
			&= 
			4 \cos \varphi ( \cos \varphi + \sqrt{2}) 
			+ 2,
		\end{aligned}
	\]
we obtain 
	\[
		\begin{aligned}
			\int_{0}^{2\pi} 
				\left[ -\frac{\pi}{3} {\rm Sc}_{P} 
				(\cos \varphi + \sqrt{2}) 
				(\sqrt{2} + 2 \cos \varphi)^2 
				\right] d \varphi
			=& 
				-\frac{\pi}{3}{\rm Sc}_{P} 
				( 8\sqrt{2} \pi + 4\sqrt{2} \pi); 
			\\
			\int_{0}^{2\pi} 
				\frac{\pi}{3} R_{33} 
				\left[
				\left\{ 
				(-3 \cos \varphi + 5 \sqrt{2} ) 
				- \frac{4}{\cos\varphi + \sqrt{2}}
				\right\} 
				( \sqrt{2} + 2 \cos \varphi )^2 
				\right] d \varphi
			=& \frac{\pi}{3}R_{33} 
			( 8 \sqrt{2} \pi + 20 \sqrt{2} \pi -16\pi ).
		\end{aligned}
	\]
Thus we have 
	\begin{equation}\label{eq:9}
		\int_{\T} \left\{-2 h_{nn} 
		+ {\rm tr}\left( h_{|T_X\T} \right) \right\} H^2
		d \sigma_0
		= - 4 \sqrt{2} \pi^2 {\rm Sc}_{P} 
		+ \frac{\pi^2}{3} (28 \sqrt{2} - 16 ) R_{33}.
	\end{equation}

\bigskip

\noindent
{\bf Step 4:} {\sl Conclusion}

\bigskip

By \eqref{eq:4}, \eqref{eq:6} and \eqref{eq:9}, 
we obtain 
	\[
		\dot{\tilde{W}}(0) 
		=  - 4 \sqrt{2} \pi^{2} ( {\rm Sc}_{P} - R_{33} ) 
		= - 4 \sqrt{2} \pi^{2} (R_{11} + R_{22}) 
	\]
which implies \eqref{eq:W1-dot}. 
Thus we complete the proof. 
\end{proof}

\

\begin{rem}\label{r:sectional}
Concerning the quantity ${\rm Sc}_{P} - \Ric_{P}(n,n)$, 
one can express it by the sectional curvature of the plane of symmetry 
of the Clifford torus  at $P \in M$. 
In fact, choose an orthonormal basis $\{ e_{1}, e_{2}, e_{3} \}$ 
of $T_{P}M$ with $e_{3} = n$. 
Denote by $K_{ij}$ the sectional curvature at $P \in M$  
for the section spanned by $e_{i}, e_{j}$. Then from the relations 
	\[
		R_{11} = K_{12} + K_{13}, \quad 
		R_{22} = K_{12} + K_{23}, \quad 
		R_{33} = K_{13} + K_{23}, \quad 
		{\rm Sc}_{P} = R_{11} + R_{22} + R_{33},
	\]
it follows that 
	\[
		{\rm Sc}_{P} - \Ric_{P}(n,n) = \frac{1}{2} {\rm Sc}_{P} + K_{12}.
	\]
\end{rem}

%\noindent From Propositions \ref{p:variational2} and 
%\ref{p:exp-W-to} we immediately obtain the following 

%\begin{cor}\label{c:PhieClifford} There exists $\e_1>0$ such that  for every  $\e \in (0,\e_1]$ the reduced functional satisfies 
%$$
 %\left| \Phi_\e(\Sigma_{\e,P,0}) - 8 \pi^{2} + 4 \sqrt{2} \pi^{2} 
% 				\left\{ {\rm Sc}_P - Ric_P(\nu, \nu)
% 				\right\} \e^{2} \right| \leq C_0 \e^{3}, 
%$$
%where $\nu \in T_P M$ denotes the normal axial vector of $\T$ and where $C_0$ denotes a positive 
%constant depending only on $M$. 		
%\end{cor}

\subsection{Asymptotics of the Willmore functional on degenerating tori}

The goal of this subsection is to understand the asymptotic behaviour  of the Willmore functional
for the degenerating tori, i.e., $\Sigma_{\e,P,R,\o}$ when $|\o|$ is close to $1$. 
For this purpose, let $\Sigma_{r} \subset T_P M$ 
be a round sphere of radius $r>0$ in the Euclidean metric $(\delta_{\a \b})$ 
which passes through the origin:  we first compute the Willmore energy of 
$\Sigma_{r}$ with respect to the metric $g$. 
Recall that in normal coordinates, the metric $g$ is expressed as 
$g_{\a \b} := \delta_{\a \b} + h_{\a \b} (x)$ where $h$ satisfies 
	\be\label{eq:defh}
		|x|^{-2}|h(x)| + |x|^{-1}|\nabla h(x)| + |\nabla^2 h(x)| \leq  h_0; \qquad 
		\quad |x| \leq \rho
\ee
for some $\rho, h_{0}>0$, see also \eqref{eq:ex-g}. 
%
%Let $B_\rho(0)\subset \R^3$ be the unit ball 
%in the Euclidean metric and 
%let $h_{\mu\nu}\in C^\infty(B_\rho(0))$ be such that 
%
%for some constant $h_0\in \R$ (such $h$ plays the role of perturbation of the Euclidean metric).
%\newline
%
% Notice that if $(M^3,g)$ is a $3$-d Riemannian manifold, 
%fixed a point $x_0\in M$ and normal coordinates around $x_0$, 
%then called $B_\rho(0)$ the ball of radius $\rho$ 
%in the tangent space $T_{x_0}M$ centered in the origin with respect to the flat metric then, 
%identifying a neighboorod $U_{x_0}$ of $x_0$ in $M$ with $B_\rho(0)$ via the exponential map,  
%$(U_{x_0},g)$ is isometric to $(B_{\rho}(0), \delta_{\mu\nu}+h_{\mu\nu})$ 
%where the perturbation $h$ satisfies \eqref{eq:defh} and 
%the constant $h_0$ depends just on the maximum of 
%$| Ric |$, $|\nabla Ric |$ and $|\nabla^2 Ric |$ in $U_{x_0}$.
In the above setting, we prove

\begin{lem}\label{lem:Sphere}
Let $\Sigma_r \subset B_{\rho} ( 0) (\subset T_{P}M)$ 
be the round sphere of radius $r>0$
with respect to the Euclidean metric $(\delta_{\a \b})$ 
passing through the origin $0$. 
Then
\[
	W_g(\exp_{P}^g (\Sigma_{r})) = 16 \pi - \frac{8\pi}{3} {\rm Sc}_{P} r^{2} + O(r^{3})
\]
where ${\rm Sc}_{P}$ is the scalar curvature of $(M,g)$ at $P$. 
\end{lem}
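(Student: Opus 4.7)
My plan is to mimic the torus computation in Proposition \ref{p:exp-W-to}: first rescale the sphere to unit size, then linearize the ambient metric via Lemma \ref{l:deri}, and finally evaluate the resulting integral over $S^{2}$.

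\textbf{Rescaling and linearization.} Under the dilation $\phi : y \mapsto ry$, the sphere $\Sigma_r$ pulls back to a unit sphere $\Sigma_1 \subset T_P M$ passing through the origin with center $c \in \R^3$, $|c|=1$, and the metric pulls back as $(\phi^\ast g)_{\alpha \beta}(y) = r^2 g_{\alpha \beta}(ry)$. Since $W_{\lambda^2 g} = W_g$ for every $\lambda > 0$, I have
\[
W_g(\exp_P^g(\Sigma_r)) = W_{g(r \cdot)}(\Sigma_1).
\]
By \eqref{eq:ex-g}, $g_{\alpha \beta}(ry) = \delta_{\alpha \beta} + r^2 h_{\alpha \beta}(y) + \tilde R(r,y)$ with $h_{\alpha \beta}(y) := \tfrac{1}{3} R_{\alpha \mu \nu \beta}(P) y^\mu y^\nu$ and $\| \tilde R(r,\cdot) \|_{C^3 (\overline{B_2(0)})} \leq C r^3$. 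Continuity of $W$ in the $C^3$-norm of the metric then yields $W_{g(r\cdot)}(\Sigma_1) = W_{\tilde g_{r^2}}(\Sigma_1) + O(r^3)$ for $(\tilde g_t)_{\alpha \beta} := \delta_{\alpha \beta} + t\, h_{\alpha \beta}$, and Lemma \ref{l:deri} applied to $\tilde g_t$ gives $W_{\tilde g_t}(\Sigma_1) = 16\pi + r^2 \dot W(0) + O(r^4)$, using that a Euclidean unit sphere has Willmore energy $16\pi$.

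\textbf{Evaluation of $\dot W(0)$.} It remains to verify that $\dot W(0) = -\tfrac{8\pi}{3} \Sc_P$. Parametrize $\Sigma_1$ by its outward unit normal $n$ via $y = c + n$; on $\Sigma_1$ one has $H \equiv 2$, $A_{ij} = \bar g_{0,ij}$ and $\mathring{A} = 0$, so each of the five terms in \eqref{eq:W-dot-0} reduces to a polynomial integral over $S^2$ of components of $n$ and $c$ contracted with $R_{\alpha \mu \nu \beta}$. Using the standard moments
\[
\int_{S^2} n^\alpha n^\beta \, d\sigma_0 = \tfrac{4\pi}{3} \delta^{\alpha \beta}, \qquad \int_{S^2} n^\alpha n^\beta n^\gamma n^\delta \, d\sigma_0 = \tfrac{4\pi}{15}\bigl( \delta^{\alpha \beta} \delta^{\gamma \delta} + \delta^{\alpha \gamma} \delta^{\beta \delta} + \delta^{\alpha \delta} \delta^{\beta \gamma} \bigr),
\]
together with the vanishing of odd moments and the three-dimensional decomposition \eqref{eq:RicciDec} of $R_{\alpha \mu \nu \beta}$ in terms of $\Sc_P$ and $\Ric_P$, all cross-terms involving $\Ric_P(c,\cdot)$ must combine and cancel, leaving only the isotropic scalar-curvature contribution.

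\textbf{Main obstacle.} The principal difficulty is the bookkeeping in the integration step. By $SO(3)$-equivariance and the polynomial structure of $h$ in $c$ and $n$, $\dot W(0)$ can be written a priori as $\alpha \Sc_P + \beta \Ric_P(c,c)$ for universal constants $\alpha, \beta$; the delicate point is to check that the combinatorial factors, together with \eqref{eq:RicciDec} reducing $R_{\alpha \mu \nu \beta}$ to a $\Sc$--$\Ric$ combination in three dimensions, force $\beta = 0$. Once independence of $c$ is established, the coefficient $\alpha = -\tfrac{8\pi}{3}$ is fixed by evaluating on any single convenient sphere (e.g.\ centering $\Sigma_1$ at $\bfez$), at which point the computation becomes directly analogous to Steps 1--3 in the proof of Proposition \ref{p:exp-W-to}, but with the Clifford torus replaced by the round unit sphere.
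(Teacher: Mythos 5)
Your argument is essentially correct, but it takes a genuinely different route from the paper. The paper does no perturbative computation for the sphere at all: it checks that $|\Sigma_r|_g=4\pi r^2+O(r^4)$ and that $\int_{\Sigma_r}|\mathring{A}_r|^2\,d\sigma_g=O(r^4)<r|\Sigma_r|_g$ (using \eqref{eq:defh} and an estimate from \cite{MonSch}), and then invokes the quantitative expansion of Theorem 5.1 in \cite{LM}, which gives $\bigl|W_g(\exp_P^g(\Sigma_r))-16\pi+\tfrac{2}{3}|\Sigma_r|_g\,\Sc_P\bigr|\le C\,r\,|\Sigma_r|_g$ with $C=C(h_0)$; the lemma then follows from the area expansion. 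You instead rerun the scheme of Proposition \ref{p:exp-W-to} for the sphere: rescaling, replacing $g_P(r\cdot)$ by $\delta+r^2h$ up to a $C^k$-error of size $O(r^3)$, and computing $\dot W(0)$ from Lemma \ref{l:deri}. Your route buys a self-contained argument exactly parallel to the torus computation (no external black box), at the price of one more curvature integral; the paper's route is much shorter and its constant depends only on $h_0$, so uniformity in $P$ and in the position of the centre is automatic. Two remarks on your sketch. First, the two middle terms of \eqref{eq:W-dot-0} are not individually "polynomial integrals in $n$ and $c$" (they depend on the tangent frame); the clean observation is that, since $H\equiv 2$ is constant, they combine into $-4H\,\Div_{\Sigma_1}\bigl((h\,n)^{T}\bigr)$ and vanish upon integration by the divergence theorem. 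Second, the decisive cancellation $\beta=0$ is asserted rather than verified in your write-up; it does hold: with $y=c+n$ one has $\sum_i\langle\tfrac{\partial h}{\partial n}e_i,e_i\rangle=-\tfrac{2}{3}\Ric_P(y,n)$ (as in Step 1 of the paper's torus computation) and $-2h_{nn}+{\rm tr}(h|_{T\Sigma_1})=-\tfrac{\Sc_P}{2}(1-\langle c,n\rangle^2)+\tfrac{2}{3}\Ric_P(c,c)+\tfrac{2}{3}\Ric_P(n,n)-\tfrac{2}{3}\Ric_P(c,n)-2\langle c,n\rangle\Ric_P(c,n)$, so the standard moments give contributions $-\tfrac{32\pi}{9}\Sc_P$ and $-\tfrac{16\pi}{9}\Sc_P$ respectively, the $\Ric_P(c,c)$ terms cancelling exactly; hence $\dot W(0)=\tfrac12\bigl(-\tfrac{32\pi}{9}-\tfrac{16\pi}{9}\bigr)\Sc_P=-\tfrac{8\pi}{3}\Sc_P$, independently of $c$, as you predicted.
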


\begin{proof}
Call $|\Sigma_r|_{g}$ and  $\mathring{A}_{r}$ the area and 
the traceless second fundamental form of $\Sigma_r$ in metric $g$. 
Then from \eqref{eq:defh}, one has
\be\label{eq:expAreaAcirc}
|\Sigma_r|_{g}= 4 \pi r^{2} + O(r^{4}) \quad  \text{and} \quad 
|\mathring{A}_r|^2=O(r^2).
\ee
In fact, the first claim clearly holds by \eqref{eq:defh}. 
The second claim also follows from \eqref{eq:defh} and 
the estimate (12) in \cite{MonSch} since 
the traceless second fundamental form of $\Sigma_r$ 
in the Euclidean space is null. 
Thus we obtain 
\be\label{eq:calU}
{\cal{U}}(\Sigma_r):=\int_{\Sigma_r} |\mathring{A}_r|^2 \, d \sigma_{g} 
= O(r^4)< r |\Sigma_r|_{g},
\ee
for $r$ small enough. 
Combining the area estimate in  \eqref{eq:expAreaAcirc}, 
the Conformal Willmore estimate \eqref{eq:calU} and 
the assumptions \eqref{eq:defh} on $h$, 
we are under the assumption of Theorem 5.1 in \cite{LM}, therefore 
$$ 
\left|W_g(\exp_{P}^g (\Sigma_{r}) ) - 16\pi 
+ \frac{2|\Sigma_r|_{g}}{3} {\rm Sc}_{P}\right| 
\leq  Cr|\Sigma_r|_{g} 
$$
for some constant $C$ depending just on $h_0$. 
Recalling \eqref{eq:expAreaAcirc}, the lemma follows.
\end{proof}

\

\noindent Since the degenerating tori converge smoothly locally 
(outside the origin which is by construction the point of concentration of the shrinking handles) 
to a round sphere passing through the origin with  radius $\sqrt[4]{2 \pi^2}$ 
by Proposition \ref{p:disk} (or Lemma \ref{lem:degTori}), 
it is natural to expect the Willmore energy 
possesses an expansion accordingly to Lemma \ref{lem:Sphere};  
more precisely  we have the following proposition.

\begin{pro}\label{p:expdegtorus}
There exists $C_0>0$, which is independent of $\e $, such that 
$$
	\limsup_{r\uparrow 1} \; \sup_{P \in M, R\in SO(3), |\o|=r} \;
	\left| \frac{1}{\e^{2}} \left(  
	W_{g_\e}(\Sigma_{\e,P,R,\omega}) - 8\pi^{2} + \frac{8 \sqrt{2}}{3} \pi^{2} \e^{2} 
	{\rm Sc}_{P} \right) \right| \leq C_0 \e
$$
for all sufficiently small $\e >0$. 
%
%Let $\e>0$ be a fixed  number small enough. 
%As $\o \to \ov{\o} \in \partial \DD$, one has that 
%$$
%   W(\exp_P(\e T_\o \T)) \to W(\e S_{r,\ov{\o}}) + 8 \pi^2 - 16 \pi,   
%$$
%where $S_{r,\ov{\o}}$ stands for the exponential map of the sphere of radius $\sqrt[4]{2} \sqrt{\pi}$ 
%and with center $\sqrt[4]{2} \sqrt{\pi} \ov{\o}$. 
\end{pro}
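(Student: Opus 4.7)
The plan is to combine three facts. First, by Möbius invariance of the Euclidean Willmore functional (Proposition \ref{p:Mobinv}) combined with scale invariance in $\R^3$, $W_0(\e R \T_\o) = W_0(\T) = 8\pi^2$ for every $(\e, R, \o)$. Second, by Proposition \ref{p:disk}(c) (equivalently Lemma \ref{lem:degTori}), as $|\o| \uparrow 1$ the surface $R\T_\o \subset T_PM$ converges in $C^\infty_{\rm loc}(\R^3 \setminus \{0\})$ to a round sphere $\S_{R,\o}$ of radius $\sqrt[4]{2\pi^2}$ passing through the origin. Third, Lemma \ref{lem:Sphere} applied to $\exp_P^g(\e \S_{R,\o})$ with $r = \e \sqrt[4]{2\pi^2}$, combined with the scale invariance $W_{g_\e}(\exp_P^{g_\e}(\S_{R,\o})) = W_g(\exp_P^g(\e \S_{R,\o}))$, yields
\[
W_{g_\e}(\exp_P^{g_\e}(\S_{R,\o})) = 16\pi - \tfrac{8\sqrt{2}}{3}\pi^2 \Sc_P \, \e^2 + O(\e^3),
\]
with $O(\e^3)$ uniform in $(P, R, \o/|\o|)$ since the remainder in Lemma \ref{lem:Sphere} depends only on pointwise curvature bounds of $(M,g)$. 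Setting $F(\Sigma) := W_{g_\e}(\exp_P^{g_\e}(\Sigma)) - W_0(\Sigma)$ for $\Sigma \subset T_PM$, the identity $W_{g_\e}(\Sigma_{\e,P,R,\o}) - 8\pi^2 = F(R\T_\o)$ together with the computation $F(\S_{R,\o}) = -\tfrac{8\sqrt{2}}{3}\pi^2 \Sc_P \e^2 + O(\e^3)$ reduces the proposition to proving
\[
\limsup_{|\o| \uparrow 1} \; \sup_{P,R,|\o|=r} \bigl| F(R\T_\o) - F(\S_{R,\o}) \bigr| = 0
\]
for every fixed $\e > 0$.

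\textbf{Implementation.} Using $g_{\e,P} - \delta = \e^2 h^\e_P$ with $|h^\e_P(y)| \leq C|y|^2$ and $|\nabla h^\e_P(y)| \leq C|y|$ from \eqref{eq:esti-he}, a Taylor expansion in the metric (compatible with the first variation formula of Lemma \ref{l:deri}) yields the pointwise bound
\[
\bigl| H_{g_{\e,P}}^2 d\sigma_{g_{\e,P}} - H_0^2 d\sigma_0\bigr|(y) \leq C\e^2 \bigl(|h^\e_P(y)|(H_0^2(y)+1) + |\nabla h^\e_P(y)|(|H_0(y)|+1)\bigr) d\sigma_0.
\]
The next step is to decompose $R\T_\o = A_\o \cup B_\o$ with $B_\o := R\T_\o \cap B_{\rho_\o}(0)$ for some $\rho_\o \downarrow 0$ chosen so as to contain the shrinking handle (whose location near the origin is pinned down by Lemma \ref{lem:degTori}), and $A_\o := R\T_\o \setminus B_\o$; correspondingly set $D_\o := \S_{R,\o} \cap B_{\rho_\o}(0)$. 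On $A_\o$ the $C^\infty_{\rm loc}$-convergence $A_\o \to \S_{R,\o}\setminus D_\o$ delivers $|\int_{A_\o} - \int_{\S_{R,\o} \setminus D_\o}| \to 0$ (for fixed $\e$), and the pointwise bound above gives $|\int_{D_\o}| \leq C \e^2 \rho_\o^2 |D_\o|_{g_0} \to 0$.

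\textbf{Main obstacle and conclusion.} The critical step is controlling $\int_{B_\o}$, where $H_0$ may concentrate on the collapsing handle. Here the key $a\,priori$ input is the Möbius-invariance identity $\int_{R\T_\o} H_0^2 d\sigma_0 = W_0(R\T_\o) = 8\pi^2$. Combined with $|h^\e_P| = O(\rho_\o^2)$ and $|\nabla h^\e_P| = O(\rho_\o)$ on $B_\o$, and with the area balance $|R\T_\o|_{g_0} = |\S_{R,\o}|_{g_0} = 4\sqrt{2}\pi^2$ which together with the area-convergence on $A_\o$ forces $|B_\o|_{g_0} \to 0$ as $|\o|\uparrow 1$, Cauchy-Schwarz yields
\[
\Bigl|\int_{B_\o}\bigl[H_{g_{\e,P}}^2 d\sigma_{g_{\e,P}} - H_0^2 d\sigma_0\bigr]\Bigr| \leq C\e^2\bigl(\rho_\o^2 + \rho_\o |B_\o|_{g_0}^{1/2} + |B_\o|_{g_0}\bigr),
\]
which tends to $0$ as $|\o|\uparrow 1$ uniformly in $(P,R)$ for each fixed $\e > 0$ (since the right-hand side depends only on $\o$, not on $P$ or $R$). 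Putting all estimates together gives $\limsup_{|\o|\to 1}\sup_{P,R}|F(R\T_\o) - F(\S_{R,\o})| = 0$, hence $\limsup_{|\o|\to 1}\sup_{P,R}|W_{g_\e}(\Sigma_{\e,P,R,\o}) - 8\pi^2 + \tfrac{8\sqrt{2}}{3}\pi^2 \Sc_P \e^2| \leq C_0 \e^3$, with $C_0$ coming entirely from the remainder in Lemma \ref{lem:Sphere}; dividing by $\e^2$ yields the claimed bound $\leq C_0 \e$. The central technical hurdle is handling the handle integral $\int_{B_\o}$ uniformly while $H_0$ may blow up, and it is resolved by leveraging the conformal-invariance identity $\int H_0^2 = 8\pi^2$ as the essential $L^2$ $a\,priori$ control.
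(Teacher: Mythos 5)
Your overall route is the same as the paper's: reduce to comparing the exponentiated torus with the exponentiated limit sphere via Lemma \ref{lem:Sphere} plus scaling invariance, split $R\T_\o$ into a shrinking handle region and a regular region where Lemma \ref{lem:degTori} gives smooth convergence, and use conformal invariance of the Euclidean Willmore energy to reconstitute the constants $8\pi^2$ and $16\pi$. However, the step you yourself single out as the ``main obstacle'' is resolved with an estimate that is not correct as stated. The pointwise bound $\bigl| H_{g_{\e,P}}^2\, d\sigma_{g_{\e,P}} - H_0^2\, d\sigma_0\bigr| \leq C\e^2\bigl(|h^\e_P|(H_0^2+1) + |\nabla h^\e_P|(|H_0|+1)\bigr) d\sigma_0$ cannot hold in general: linearizing $H$ in the ambient metric produces, besides the $\nabla h$-terms, the contraction of $h$ with the \emph{full} second fundamental form (through the trace taken with the perturbed induced metric and the variation of the normal), so the pointwise error is of size $|\nabla h| + |h|\,|A_0|$, not $|\nabla h| + |h|\,|H_0|$. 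On the collapsing handle $|A_0|\to\infty$ while $H_0$ need not be comparable to $|A_0|$ pointwise, which is exactly where the estimate is needed; consequently your Cauchy--Schwarz step, which rests only on the a priori bound $\int_{R\T_\o}H_0^2\, d\sigma_0 = 8\pi^2$, does not control the dominant contribution $\e^2\int_{B_\o}|h|\,|A_0|\,(|A_0|+|H_0|)\,d\sigma_0$.

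The gap is fixable, and the fix is precisely what the paper does: it invokes \cite[Lemma 2.3]{MonSch}, which yields $|H_{g_\e}^2 - H_{g_0}^2| \leq C_0\e^2(\rho_r^2 |A_{g_0}|^2 + 1)$ on $R\T_\o \cap B_{\rho_r}(0)$ (together with the area-form comparison of \cite[Lemma 2.2]{MonSch}), and then uses the conformal invariance of $\int |A_{g_0}|^2\, d\sigma_{g_0}$ in $\R^3$ (equivalently, Gauss--Bonnet with $\chi(\T)=0$), so that $\int_{R\T_\o}|A_{g_0}|^2\, d\sigma_{g_0} = \int_{\T}|A_{g_0}|^2\, d\sigma_{g_0} = 8\pi^2$ plays exactly the role you assigned to $\int H_0^2$. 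With that replacement your handle estimate goes through verbatim, and the remaining structure of your argument is sound and matches the paper's: working at fixed $\e$ suffices because the $\limsup$ in the statement is taken for fixed $\e$ (the paper proves the stronger version uniform in $\e$), the uniformity in $(P,R)$ follows from the uniform metric estimates on the compact $M$, and the cut-off radii $\rho_\o \downarrow 1$ must be chosen by a diagonal argument compatible with the local smooth convergence of Lemma \ref{lem:degTori}, as both you and the paper implicitly do.
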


\begin{proof}
First of all, we recall  the scaling invariance of the Willmore functional, which in our case implies: 
$W_g(\exp_{P}(\Sigma)) = W_{g_\e} ( \exp_{P}^{g_\e}( \e^{-1} \Sigma) )$. 
Hence, Lemma \ref{lem:Sphere} yields 
	\[
		W_{g_\e} ( \exp_{P}^{g_{\e}} 
		(R S_{\sqrt[4]{2 \pi^2}, \sqrt[4]{2\pi^2} \,\ov{\o}} ) )
		= 16 \pi - \frac{8 \sqrt{2}}{3} \pi^2 \Sc_{P} \e ^2 + O(\e^3)
	\]
for sufficiently small $\e $ and 
uniformly with respect to $P \in M$, $R \in SO(3)$ 
and $\ov{\omega} := \o / |\o | \in S^1$. 
Here $S_{r_1,r_2 \ov{\o}}$ denotes the sphere centred at $r_2 \ov{\o}$ 
with  radius $r_1$ for $r_1,r_2>0$ in the Euclidean space. 
Therefore, to prove the Proposition, it is enough to show that 
there exists $\e_0 > 0$ such that 
\be\label{eq:limsup}
\limsup_{r \uparrow 1} \;  \; \sup_{\e\in(0,\e_0], P \in M, R\in SO(3), |\o|=r} \;\frac{1}{\e^2} \left|  
		W_{g_\e}(\Sigma_{\e,P,R,\omega}) 
		- W_{g_\e} 
		( \exp_{P}^{g_\e} (R\, S_{\sqrt[4]{2\pi^2},\sqrt[4]{2 \pi^2}\,\ov{\o}}) ) 
		- 8\pi^{2} + 16 \pi \right| = 0.
\ee

%\be\label{eq:limsup1}
%\lim_{r \uparrow 1} \; \sup_{|\o|=r, P\in M, R\in SO(3)} \left|  
		%W(\Sigma_{\e,P,R,\omega}) - 8\pi^{2} + \frac{8 \sqrt{2}}{3} \pi^{2} \e^{2} 
		%S_{P} +O(\e^3) \right| = 0\quad.
%\ee
%Since, for every $P \in M$, as $\e\to 0$ the surfaces $\Sigma_{\e,P,R,\o}$ and 
%$\exp_{P}(\e R\, S_{\sqrt[4]{2\pi^2},-\sqrt[4]{2 \pi^2}\,\ov{\o}})$ are shrinking towards $P$, 
%in order to perform the estimates it is convenient  to rescale the metric 
%and the surfaces by $\e^{-1}$ so that we can work at a fixed scale uniformly in $\e$; 
%more precisely, for every $P\in M$ consider normal coordinates 
%$(x^\mu)_{\mu=1,\ldots,3}$ centered at $P$ and, 
%for $\e \leq |\e_0|$ depending just on $M$,  set
%\be\label{eq:defge}
%g_\e(x)(u,v):=g(\e x)(\e^{-1} u, \e^{-1} v) \quad \forall u,v \in T_PM, \quad |x|\leq 10.
%\ee
%By the scaling invariance of the Willmore functional and since by construction 
%$\Sigma_{\e,P,R,\o}:=\exp_P(\e R \,T_\o \T)$, 
%we have
%\be\label{eq:ScaledSurf}
%W_g(\Sigma_{\e,P,R,\o})=W_{g_\e}(\exp_P( R \,T_\o \T)) \quad \text{and} \quad W_g( \exp_{P} (\e R\, S_{\sqrt[4]{2\pi^2},-\sqrt[4]{2 \pi^2}\,\ov{\o}})=W_{g_\e}( \exp_{P} ( R \, S_{\sqrt[4]{2\pi^2},-\sqrt[4]{2 \pi^2}\,\ov{\o}}),
%\ee
%where of course $W_g$ and $W_{g_\e}$ denote the Willmore functionals computed 
%in metric $g$ and $g_\e$ respectively. 

We are going to estimate separately the errors in the handle part 
where the second fundamental form blows up as $|\o|\uparrow 1$ and 
in the complementary region, where we have smooth convergence to a round sphere. 
To this end, for $\rho > 0$, set 
	\[
		\begin{aligned}
			\Sigma_{\e ,P,R,\o ,\rho,1} 
			&:= \exp_{P}^{g_\e} \left( R \T_{\o} \backslash B_{\rho}(0) \right),
			&
			\Sigma_{\e ,P,R,\o ,\rho,2} 
			&:= \exp_{P}^{g_\e} \left( R \T_{\o} \cap B_{\rho}(0) \right),
			\\
			\Sigma_{\e ,P,R,\ov{\o},\rho,1} 
			&:= \exp_{P}^{g_\e} \left( R S_{\sqrt[4]{2 \pi^2}, \sqrt[4]{2 \pi^2} \ov{\o}} 
			\backslash B_{\rho}(0) \right),
			&
			\Sigma_{\e ,P,R,\ov{\o},\rho,2} 
			&:= \exp_{P}^{g_\e} \left( R S_{\sqrt[4]{2 \pi^2}, \sqrt[4]{2 \pi^2} \ov{\o}} 
			\cap B_{\rho}(0) \right). 
		\end{aligned}
	\]
Notice that  $\e =0$ corresponds to the Euclidean case, so  
the surface does not depend on $P \in M$.

We first treat the sphere part, namely $\Sigma_{\e,P,R,\o ,\rho,1}$. 
As in the previous subsection, let $t=\e ^2$, $g_t$ as in \eqref{eq:ex-g-2} and 
	\[
		W(t,\o ,\rho) := \int_{\Sigma_{\e,P,R,\o ,\rho,1}} 
		H^2_{g_t} d \sigma_{g_t}.
	\]
Thanks to Lemma \ref{lem:degTori}, we remark that 
for fixed $\rho>0$, 
$\Sigma_{\e,P,R,\o ,\rho,1}$ converges to $\Sigma_{\e,P,R,\ov{\o} ,\rho,1}$ 
smoothly and uniformly with respect to $(\e ,P,R)$ as $\o \to \ov{\o}$. 
Therefore, for a suitable sequence $\{\rho_r\}$ satisfying $\rho_r \to 0$ 
as $r \to 1$, we observe that for some $t_0>0$ 
	\[
		\begin{aligned}
			&\sup_{t \in (0,t_0), P \in M, R \in SO(3), |\o |=r} 
			\left| \frac{1}{t} 
			\left[ \left\{ W(t,\o ,\rho_r) - W(0,\o, \rho_r )  \right\} 
			- \left\{ W(t,\ov{\o} , \rho_r ) - W(0,\ov{\o}, \rho_r ) \right\}
			 \right] \right|
			 \\
			 =& \sup_{t \in (0,t_0), P \in M, R \in SO(3), |\o |=r} 
			 \left| \frac{1}{t} \int_0^t 
			 \left\{ \frac{\partial W}{\partial s}(s,\o ,\rho_r) - 
			 \frac{\partial W}{\partial s}(s,\ov{\o} ,\rho_r) \right\} d s \right|
			 \to 0
		\end{aligned}
	\]
as $r \to 1$. This implies that 
	\begin{equation}\label{eq:WTtoWSReg}
		\begin{aligned}
			\sup_{\e \in (0,\e_0), P \in M, R \in SO(3), |\o |=r} 
			\frac{1}{\e^2} 
			\Big|
			&\big\{ W_{g_\e} (\Sigma_{\e,P,R ,\o ,\rho_r,1}) - 
			W_{g_0}(\Sigma_{0,P,R,\o ,\rho_r,1})  \big\} 
			\\
			& - 
			\big\{ W_{g_\e} ( \Sigma_{\e ,P,R,\ov{\o}, \rho_r,1 } ) 
			- W_{g_0} ( \Sigma_{0,P,R,\ov{\o}, \rho_r,1 } ) 
			 \big\}\Big| \to 0
		\end{aligned}
	\end{equation}
as $r \to 1$. 

%First of all, setting $r:=|\o|$, notice that there exists a sequence $\rho_r\downarrow 0$ 
%as $r \uparrow 1$ such that the handle of the torus $\exp_P(\T_{\o})$ is contained in the shrinking  Euclidean ball $B_{\rho_r}(P)$.
%
%Let us start with the region uniformly converging to a round sphere. Thanks to Lemma  \ref{lem:degTori} we already know that $\exp_P(T_\o \T) \setminus P$ conveges to  $S_{\sqrt[4]{2\pi^2},-\sqrt[4]{2 \pi^2}\,\ov{\o}} \setminus P$ locally smoothly as $|\o|=r\uparrow 1$, moreover it is not difficult to see that
%\begin{eqnarray}\label{eq:geprop} 
%(g_\e)_{\mu\nu}= \delta_{\mu \nu}+ \e^2 h^\e_{\mu \nu} && \text{ for some smooth tensor field $h^\e_{\mu \nu}$ on $B_{10}(P)$ uniformly bounded} \nonumber \\
%&& \text{ in $C^k(B_{10}(P))$-norm as $\e\to 0$ for every $k \in \N$ and satisfying  \eqref{eq:defh}.}
%\end{eqnarray}  
%Therefore, for a suitable sequence $\rho_r \downarrow 0$ as $r\uparrow 1$, we have that 
%\begin{eqnarray}
%\label{eq:WTtoWSReg}
%&&\frac{1}{\e^2}  \Big| \left(W_{g_\e}(\exp_{P}(T_{\o} \T)\setminus B_{\rho_r}(P))-W_{e}(T_{\o} \T\setminus B_{\rho_r}(P))\right)  \nonumber \\
%&& \quad - \left( W_{g_e}( \exp_{P}  (S_{\sqrt[4]{2\pi^2},-\sqrt[4]{2 \pi^2}\,\ov{\o}})\setminus B_{\rho_r}(P)) - W_{e}( S_{\sqrt[4]{2\pi^2},-\sqrt[4]{2 \pi^2}\,\ov{\o}}\setminus B_{\rho_r}(P)) \right) \Big| \to 0 \quad \text{as } |\o|=r\uparrow 1,  
%\end{eqnarray}
%uniformly in $P\in M$ and $\e\in (0,\e_0]$.

Now we estimate the difference between $W_{g_\e}$ and $W_{g_0}$ in 
the handle region. 
We first remark that the surface $\Sigma_{\e ,P,R,\o,\rho_r,2}$ 
is diffeomorphic to $R \T_{\o} \cap B_{\rho_r}(0)$, hence 
we regard all geometric quantities of $\Sigma_{\e ,P,R,\o ,\rho_r,2}$ 
as functions on $R \T_{\o} \cap B_{\rho_r}(0)$. 
By applying \cite[Lemma 2.3 with the choice $\gamma = \e^2 \rho_r^2$]{MonSch} 
and recalling \eqref{eq:ge=d+eh}-\eqref{eq:esti-he}, we get 
	\[
		\left| H_{g_\e}^2- H_{g_0}^2\right| 
		\leq C_0 \e^2 ( \rho_r^2  |A_{g_0}|^2 + 1 ) \quad 
		\text{on } R \T_{\o} \cap B_{\rho_r}(0), 
	\]
where $H_{g_0}$ is mean curvature of $R \T_{\o} \cap B_{\rho_r}(0)$ 
in metric $g_\e$ and 
$H_{g_0}$, $A_{g_0}$ are the mean curvature and the second fundamental form 
of $R \T_{\o} \cap B_{\rho_r}(0)$ in the Euclidean metric, and 
$C_0>0$ is a constant which does not depend on $\e ,P,R,\o $. 
Recalling also the following estimate of the ratio of the area forms 
$d\sigma_{g_\e}$ and $d\sigma_{g_0}$  in metric $g_{\e}$ and Euclidean metric 
(see \cite[Lemma 2.2]{MonSch})
	\[
		(1-4 C_0 \rho_r^2 \e^2) d\sigma_{g_0} 
		\leq d\sigma_{g_\e} 
		\leq (1+4 C_0 \rho_r^2 \e^2) d \sigma_{g_0} 
		\quad  \text{on } R \T_{\o} \cap B_{\rho_r}(0),
	\]
it follows that
	\[
		\begin{aligned}
			\frac{1}{\e^2}\left|W_{g_\e}(\Sigma_{\e ,P,R,\o ,\rho_r,2}) 
			- W_{g_0}(\Sigma_{0 ,P,R,\o ,\rho_r,2})\right| 
			& \leq 
			C_1 \int_{R\T_\o \cap B_{\rho_r}(0)} 
			\left( \rho_r^2 |A_{g_0}|^2 + 1 \right) d \sigma_{g_0} 
			\\
			& \leq C_1 \rho_r^2 \int_{R\T_\o} |A_{g_0}|^2 d \sigma_{g_0} 
			+ C_1 \left| R \T_\o \cap B_{\rho_r}(0) \right|_{g_0}. 
		\end{aligned}
	\]
Noting the conformal invariance of $\int_{\Sigma} |A_{g_0}|^2 d \sigma_{g_0}$ 
in the Euclidean space, $\rho_r \to 0$ and 
$| \T_{r \mathbf{e}_x} \cap B_{\rho_r}(0) |_{g_0} \to 0$ 
due to Lemma \ref{lem:degTori}, we obtain 
	\begin{equation}\label{eq:WhWeHand}
		\begin{aligned}
			&\sup_{\e \in (0,\e_0), P \in M, R \in SO(3), |\o |=r}  
			 \frac{1}{\e^2}\left|W_{g_\e}(\Sigma_{\e ,P,R,\o ,\rho_r,2}) 
			- W_{g_0}(\Sigma_{0 ,P,R,\o ,\rho_r,2})\right|
			\\
			\leq & C_1 \rho_r^2 \int_{\T} |A_{g_0}|^2 d \sigma_{g_0} 
			+ C_1 \left| \T_{r \mathbf{e}_x}  \cap B_{\rho_r}(0) \right|_{g_0} \to 0
			\qquad \text{as } r \to 1. 
		\end{aligned}
	\end{equation}
In an analogous way, but it is actually easier 
since the second fundamental form is bounded, one can check that
\be \label{eq:WhWeSpBr}
\sup_{\e \in (0,\e_0), P \in M, R \in SO(3), \ov{\o} \in S^2} 
\frac{1}{\e^2}\left|W_{g_\e}( \Sigma_{\e ,P,R,\ov{\o}, \rho_r,2 } ) 
- W_{g_0} ( \Sigma_{0,P,R,\ov{\o}, \rho_r,2} )  \right| 
\to 0 \quad \text{ as } r\to 1.
\ee

Finally, since the conformal invariance of the Willmore functional in 
the Euclidean space gives 
	\[
		16 \pi = 
		W_{g_0} (\Sigma_{0,P,R,\ov{\o},\rho_r,1}) 
		+ W_{g_0} (\Sigma_{0,P,R,\ov{\o},\rho_r,2}), 
		\qquad 
		8 \pi^2 = 
		W_{g_0} (\Sigma_{0,P,R,\o,\rho_r,1}) 
		+ W_{g_0} (\Sigma_{0,P,R,\o,\rho_r,2}),
	\]
combining \eqref{eq:WTtoWSReg}, \eqref{eq:WhWeHand} and \eqref{eq:WhWeSpBr}, 
the claim \eqref{eq:limsup} follows and we complete the proof. 
\end{proof}

%		From Propositions \ref{p:variational2} and \ref{p:expdegtorus} 
%we deduce the following:

%	\begin{cor}
%For any given small $\delta>0$, one may find an $\e_{\delta}>0$ such that 
%		\[
%			\left| \Phi_{\e} ( \Sigma_{\e,P,\omega} ) - 8 \pi^{2} 
%			+ \frac{8 \sqrt{2}}{3} \pi^{2} S_{P} \e^{2} \right| 
%			\leq o_{\delta} (1) \e^{2}
%		\]
%for all $(\e,P,R,\omega) \in (0,\e_{\delta}) \times M \times SO(3) \times \DD$ 
%with ${\rm dist} (\omega, \partial \DD) = \delta$ where 
%$o_{\delta}(1) \to 0$ as $\delta \to 0$. 
%	\end{cor}

\section{Proof of the main theorems}

In this short section, collecting the above results, 
we prove Theorems \ref{t:1} and \ref{t:2} starting from the former.

\subsection{Proof of Theorem \ref{t:1}} \label{Sec:ProofT1}
First of all, from \eqref{eq:tke}, we notice that 
	$$
		\partial \mathcal{T}_{K , \e} 
		= \left\{ \exp_P^{g_\e} ( R \T_\o  ) \; : \; P \in M, R \in SO(3),  
		\o \in \partial K \right\}. 
$$
Combining Propositions \ref{p:variational} and \ref{p:variational2}, 
we know that, for every compact subset $K\subset \subset \DD$ to be fixed later, 
there exists  $\bar{\e}_K>0$ such that for every $\e\in (0,\bar{\e}_K]$ 
there is a function $\Phi_\e: \mathcal{T}_{K , \e} \to \R$ with the following properties:

\

\noindent (i) If $(P_\e,R_\e,\o_\e)$ is a critical point of $\Phi_\e$, 
then the perturbed torus $\Sigma_{\e,P_\e, R_\e, \o_\e}[\var_\e(P_\e,R_\e,\o_\e)]$ 
is a Willmore surface with constrained area $4\sqrt{2} \pi^2 \e^2$; 
recall the definition \eqref{eq:defSw} of $\Sigma_{\e,P, R, \o}[\var]$.

\

\noindent (ii) There exists a constant $C_K>0$ depending just on $K$ such that, 
for every $(P,R,\o)\in M\times SO(3)\times K$, it holds
\[
 \left|\Phi_\e(P,R,\o)-W_{g_\e}(\Sigma_{\e,P,R,\o}) \right|\leq C_K \e^4.
\]

\

In order to choose the compact subset $K\subset \subset \DD$ suitably, 
let us recall the expansion of the Willmore functional on symmetric tori and 
on degenerate tori 
(given in Propositions \ref{p:exp-W-to} and \ref{p:expdegtorus} respectively). 

\

\noindent (iii) For every $P\in M$ and $\e$ small enough it holds 
\[
W_{g_\e}(\Sigma_{\e,P,R,0}) 
= 8\pi^2 -4\sqrt{2} \pi^2 \{{\rm Sc}_P-\Ric_P(R n,R n)\} \e^2 + O(\e^3), 
\]
where $n \in T_PM$ denotes the normal axial vector of $\T$. 

\

\noindent (iv) There exist $\e_{0} > 0$ and $C_0>0$ such that
\[
\limsup_{r\uparrow 1} \; \sup_{P \in M, R\in SO(3), |\o|=r} \;
		\frac{1}{\e ^2}\left| 
		W_g(\Sigma_{\e,P,R,\omega}) - 8\pi^{2} + \frac{8 \sqrt{2}}{3} \pi^{2} \e^{2} 
		{\rm Sc}_{P}  \right| \leq C_0 \e
\]
for all $\e \in (0,\e_0]$. 
If we now assume (as in the hypotheses of Theorem \ref{t:1}) that  
\be\nonumber \label{eq:assSc}
3 \sup_{P \in M} \left( {\rm Sc}_{P} - \inf_{| \nu |_{g}=1} {\rm Ric}_{P} ( \nu, \nu) \right) > 2 \sup_{P \in M} {\rm Sc}_{P},
\ee
 (or, respectively $3 \inf_{P \in M} \left( {\rm Sc}_{P} - \sup_{| \nu |_{g}=1} 
		{\rm Ric}_{P} ( \nu, \nu) \right) 
		< 2 \inf_{P \in M} {\rm Sc}_{P}) $
then, combining  (ii), (iii), (iv) above there exists $r\in (0,1), \e_1>0$ such that, chosen as  compact subset $K:=B_r(0)\subset \subset \DD$,  the corresponding reduced functional $\Phi_\e$ satisfies
\be\nonumber \label{eq:PhiInterior}
\inf_{\mathcal{T}_{K , \e}} \Phi_\e < \inf_{\partial \mathcal{T}_{K , \e}}  \Phi_\e \qquad \quad 
\left( \hbox{or, respectively,  } \sup_{\mathcal{T}_{K , \e}} \Phi_\e  > \sup_{\partial \mathcal{T}_{K , \e}}  \Phi_\e
\right), 
\ee  		
for every $\e \in (0,\e_1]$.

It follows that the global minimum of $\Phi_\e$ (resp. the global maximum) is achieved at an interior point 
of $\mathcal{T}_{K , \e}$, which therefore  is a  critical point of $\Phi_\e$.  By recalling (i) above we conclude that the perturbed torus $\Sigma_{\e,P_\e, R_\e, \o_\e}[\var_\e(P_\e,R_\e,\o_\e)]$ is a  Willmore surface with constrained area $4\sqrt{2} 
\pi^2 \e^2$  and finally, by construction, the graph function $\var_\e$ converges to $0$ in $C^{4,\a}$-norm as $\e\to 0$
with decay rate $\e^2$ in the rescaled metric $g_\e $. 
This concludes the proof. 
\hfill$\Box$

\begin{rem}[Proof of Remark \ref {rem:2tori}]\label{Rem:Pf2Tori}
Notice that if both the conditions \eqref{eq:Assump1} and \eqref{eq:Assump2} are satisfied then, by repeating verbatim the proof above,  the reduced functional $\Phi_\e$ has both global minimum and global maximum attained at interior points of $\mathcal{T}_{K , \e}$.  It follows that   exist at least \emph{two} smooth embedded Willmore tori in $(M,g)$ with constrained 
area equal to $4 \sqrt{2} \pi^2 \e^2$.
\end{rem}

\subsection{Proof of Theorem \ref{t:2}} \label{Sec:ProofT2}
The only additional difficulty in proving Theorem \ref{t:2} is that we have to find both a compact subset of the unit disk $\DD$ and a compact subset of $\R^3\setminus\{0\}$ where to perform the finite dimensional reduction. Let us discuss the latter, the former  being analogous to the compact case.

As recalled  in the introduction, 
the Schwarzschild metric $g_{ij}$ on $\R^3\setminus\{0\}$ 
is conformal to the Euclidean one, 
its scalar curvature vanishes identically and it has two asymptotically flat ends 
(one for $r\downarrow 0^+$ and one for $r \uparrow +\infty$). 
For $\tau > 0$, set 
\[
A_\tau := \{ P \in \R^3 \ : \ \tau \leq |P|_{g_0} \leq \tau^{-1} \}, 
\quad 
\eta := \max_{\R^3 \setminus \{0\}} \max_{|\nu|_{g_{Sch}} =1 } 
\left|\Ric_P (\nu,\nu) \right| > 0. 
\]
Remark that the positivity of $\eta$ easily follows from 
either direct computations or observing that 
$\Ric \equiv 0$ implies that $(\R^3  \setminus \{0\}, g_{Sch} )$ is isometric 
to the Euclidean space. 
In addition, since $(\R^3 \setminus \{0\}, g_{Sch})$ has two flat ends, 
there exists a $\tau_0>0$ such that 
$\eta$ is achieved at interior points $P$ 
of $A_{\tau}$ provided $\tau \in (0,\tau_0)$. 
Furthermore, noting that 
$W_{g_0}(R\T_\omega) = W_{g_0}(\T) = 8 \pi^2$ by 
conformal invariance of $W_{g_0}$, 
for every compact set $K \subset \mathbb{D}$, one can find $C_K>0$ such that 
if either $|P|_{g_0} \leq \tau$ or  $\tau^{-1} \leq |P|_{g_0}$, then 
\begin{equation}\label{eq:diff-W0-8}
\begin{aligned}
\sup_{P \in A_{\tau}^{c}, R \in SO(3), \omega \in K} 
\left| W_{g_\e} (\exp_{P}^{g_\e} (R \T_\omega ) ) - 8\pi^2 \right| 
&= \sup_{P \in A_{\tau}^c, R \in SO(3), \omega \in K} 
\left| W_{g_\e} (\exp_{P}^{g_\e} (R \T_\omega ) ) 
- W_{g_0} ( R \T_\omega )  \right| 
\\
&\leq C_K o_{\tau}(1) \e^2
\end{aligned}
\end{equation}
where $o_{\tau}(1) \to 0$ as $\tau \to 0$. 
As in Proposition 4.6, by $\Sc \equiv 0$ on $\R^3 \setminus\{0\}$, 
we may also prove that 
	\[
		\limsup_{r \to 1} \sup_{P \in \R^3 \setminus \{0\}, R \in SO(3), |\omega|=r  } 
		\left| \frac{1}{\e^2} 
		\left( W_{g_\e} ( \Sigma_{\e ,P,R,\omega} [0] ) 
		- 8 \pi^2 \right) \right| 
		\leq C_0 \e .
	\]
Hence, we may find $r_\eta\in (0,1)$ and $\e _\eta >0$ such that 
if $\tau \in (0,\tau_0)$ and $\e \in (0,\e_\eta)$, then 
\[
\sup_{P \in  A_{\tau}, R \in SO(3), |\omega|=r_\eta} 
\left| W_{g_\e} (\exp_{P}^{g_\e} (R \T_\omega ) ) - 8\pi^2 \right| 
\leq \frac{\eta}{4} \e^2
\]
Setting $K := \overline{B_{r_\eta}(0)}$, by \eqref{eq:diff-W0-8}, 
there exists a $\tau_{1,\eta} \in (0,\tau_0)$ satisfying  
\[
\sup_{P \in \partial A_{\tau_{1,\eta}}, R \in SO(3), \omega \in K} 
\left| W_{g_\e} (\exp_{P}^{g_\e} (R \T_\omega ) ) - 8\pi^2 \right| 
\leq \frac{\eta}{2} \e^2
\]
for all $\e \in (0,\e_\eta)$.

Now one can repeat  the proof of Theorem \ref{t:1} 
by replacing the ambient space $M$ with $A_{\tau_{1,\eta}}$ above, 
getting a compact subset $K := \overline{B_{r_\eta}(0)} \subset \subset \DD$ 
and an $\e_0>0$ such that 
the corresponding reduced functional $\Phi_\e$ has an interior critical point. 
Therefore there exists a perturbed torus 
$\Sigma_{\e,P_\e, R_\e, \o_\e}[\var_\e(P_\e,R_\e,\o_\e)]$ 
which is a  Willmore surface with constrained area $4\sqrt{2} 
\pi^2 \e^2$  whose  graph function $\var_\e$ converges to $0$ in $C^{4,\a}$-norm as $\e\to 0$. We conclude by recalling that the area constrained-Willmore surfaces coincide with the critical points of the Hawking mass under area constraint and that the rotational invariance of the Scwarzschild metric implies that all the surfaces obtained by rotating $\Sigma_{\e,P_\e, R_\e, \o_\e}[\var_\e(P_\e,R_\e,\o_\e)]$ around the origin $0 \in \R^3$ are critical points as well.
\hfill$\Box$

\begin{rem}\label{rem:proofRem}
The generalization to ALE scalar flat 3-manifods claimed in Remark \ref{rem:ALE} follows on the same line of the proof above once we replace the annulus $A_{\tau}$ by a large ball $B_{\tau}(x_0)$, for some fixed $x_0\in M$. Indeed,  assumptions 1)-2) imply that 
\begin{equation}\nonumber
\sup_{P \in B_{\tau}(x_0)^{c}, R \in SO(3), \omega \in K} 
\left| W_{g_\e} (\exp_{P}^{g_\e} (R \T_\omega ) ) - 8\pi^2 \right| 
\leq C_K o_{\tau}(1) \e^2
\end{equation}
where $o_{\tau}(1) \to 0$ as $\tau \to 0$;  the rest of the proof can be repeated verbatim.
\end{rem}

\end{document}